\crefname{equation}{}{}
\crefname{algorithm}{Algorithm}{Algorithms}
\crefname{theorem}{Theorem}{Theorems}
\crefname{lemma}{Lemma}{Lemmas}
\crefname{remark}{Remark}{Remarks}
\crefname{figure}{Figure}{Figures}
\crefname{section}{Section}{Sections}
\crefname{subsection}{Subsection}{Subsections}
\numberwithin{equation}{section}
\newtheorem{theorem}{Theorem}[section]
\newtheorem{lemma}[theorem]{Lemma}
\newtheorem{remark}[theorem]{Remark}
\newtheorem{definition}{Definition}[section]
\newcommand{~}{\,\,\,}
\newcommand{\Diag}{\mathbf{Diag}}
\newcommand{\orth}{\mathbf{orth}}
\newcommand{\tr}{\mathbf{tr}}
\newcommand{\kvec}{\mathbf{vec}}
\def\wideubar{\underaccent{{\cc@style\underline{\mskip10mu}}}}
\def\Wideubar{\underaccent{{\cc@style\underline{\mskip8mu}}}}
\definecolor{Gray}{rgb}{0.5,0.5,0.5}
\begin{document}

\title{Stochastic Gauss-Newton Algorithms for Online PCA}

\author{Siyun Zhou\thanks{School of Mathematical Sciences, University of Electronic Science and Technology of China, China, (\href{mailto:zhousiyun@std.uestc.edu.cn}{zhousiyun@std.uestc.edu.cn}).}
	\and Xin Liu\thanks{State Key Laboratory of Scientific and Engineering Computing, Academy of Mathematics and Systems Science, Chinese Academy of Sciences, and University of Chinese Academy of Sciences, China, (\href{mailto:liuxin@lsec.cc.ac.cn}{liuxin@lsec.cc.ac.cn}). The research was supported in part by the National Natural Science Foundation of China (No. 12125108, 11971466, 11991021, 12021001 and 11688101), Key Research Program of Frontier Sciences, Chinese Academy of Sciences (No. ZDBS-LY-7022), and the Youth Innovation Promotion Association, Chinese Academy of Sciences.}
	\and Liwei Xu\thanks{School of Mathematical Sciences, University of Electronic Science and Technology of China, China, (\href{mailto:xul@uestc.edu.cn}{xul@uestc.edu.cn}). The research was supported in part by the National Natural Science Foundation of China (No. 12071060).}}

\date{} 
\maketitle

\begin{abstract}
	In this paper, we propose a stochastic Gauss-Newton (SGN) algorithm to study the online principal component analysis (OPCA) problem, which is formulated by using
	the symmetric low-rank product (SLRP) model for dominant eigenspace calculation.
	Compared with existing OPCA solvers, SGN is of improved robustness 
	with respect to the varying input data and algorithm parameters.
	In addition, turning to an evaluation of data stream based on approximated objective functions, we develop a new adaptive stepsize strategy for SGN (AdaSGN) which requires no priori knowledge of the input data, and numerically illustrate its comparable performance with SGN adopting the manaully-tuned diminishing stepsize.
	Without assuming the eigengap to be positive, we also establish the global and optimal convergence rate of SGN with the specified stepsize using the diffusion approximation theory. 
	
\end{abstract}


\section{Introduction}\label{sec:intro}
Principal component analysis (PCA) \cite{pearson1901liii} is an orthogonal linear transformation technique primarily used for feature extraction and dimension reduction (see \cite{Jolliffe1986Principal, jolliffe2016principal}), which finds applications in diverse fields (e.g., \cite{raychaudhuri1999principal,subasi2010eeg, turk1991eigenfaces}).
Given a random vector $a\in\mathbb{R}^{n}$ with mean $\mathbb{E}[a] = 0\in\mathbb{R}^n$ and covariance $\mathbb{E}[aa^\top] = \Sigma\in\mathbb{R}^{n\times n}$,
let $\lambda_{i}~(i=1, 2, \cdots, n)$ be the $i$-th largest eigenvalue of $\Sigma$ and 
$u_{i}~(i=1, 2, \cdots, n)$ be the corresponding unit eigenvector.
Suppose the target dimension is $p~(p\ll n)$, the PCA aims to recover a $p$-dimensional subspace spanned by the top $p$ eigenvectors of $\Sigma$, i.e., $u_{1}, u_{2}, \cdots, u_{p}$, and these $p$ eigenvectors are called principal components (PCs).
In a practical context, the sample average approximation (SAA) picks $m$ samples to construct the empirical covariance~$\Sigma_{m}=\sum_{i=1}^{m}a^{(i)}a^{(i)\top}/m$ as an estimate of $\Sigma$. Denote the sample matrix by $A_{m} = [a^{(1)}, a^{(2)}, \cdots, a^{(m)}]\in\mathbb{R}^{n\times m}$.
Then, the key of traditional PCA lies in the top-$p$ eigenvalue decomposition (EVD) of $\Sigma_{m}$ or equivalently the top-$p$ singular value decomposition (SVD) on $A_{m}$.
For the theoretical analysis, we make the following statistical assumption on $A_{m}$:

[A1] (i.i.d.)~\emph{$a^{(1)}, a^{(2)},\cdots ,a^{(m)}$ are independently and identically distributed realizations of the given random vector $a\in\mathbb{R}^{n}$}.

However, in many real-world scenarios, the input data does not arrive
simultaneously, instead, 
it comes in a stream. The dynamic nature of the data demands real-time updates of the estimated PCs.
This inspires PCA in an online setting, which proceeds under the following additional assumption:

[A2] (batchsize)~\emph{the samples are received sequentially, and no more than $h~(h\ll m)$ passes over the data flow in within each round of update.}\\
The estimated PCs need to be updated before the new data enters.
This variant of PCA is known as OPCA (OPCA), or streaming PCA (see \cite{cardot2018online}).

There have been many works in solving the OPCA problem, and most of them are developed from the well-known Oja's iteration~\cite{Oja1982Simplified}.
The direct extension of Oja's iteration, originally designed for $h=1,~p=1$ case, is updated by
\begin{align}
\label{eq:oja}X^{(k+1)}=\orth\left\{X^{(k)}+\frac{\alpha^{(k)}}
{\vert \mathcal{I}^{(k)}\vert}\sum_{i\in \mathcal{I}^{(k)}}a^{(i)}a^{(i)\top}X^{(k)}\right\},
\end{align}
where $\alpha^{(k)}>0$ is the stepsize,
and $\mathcal{I}^{(k)}$ is the index set of the $k$-th data block with the cardinality being smaller than $h$. One active line of related research is adapting Oja's iteration to meet specific requirements, e.g., embedding iterative soft thresholding for sparse PCs \cite{wang2016online}, downsampling for time-dependent data \cite{chen2018dimensionality},
studying modified variant for missing data \cite{balzano2018streaming}, and to name a few.

The research on asymptotic or non-asymptotic analysis of convergence properties for \cref{eq:oja} has been attracting increasing attention in recent years \cite{Allen2017First, balcan2016improved,hardt2014noisy, huang2021streaming, Li2015Rivalry}, and much more progress \cite{Balsubramani2013The,de2015global,feng2018semigroups,jain2016streaming, Li2016Near,li2017diffusion,shamir2016convergence, zhou2020convergence} has been made for the simpler $p=1$ case. 
Among these works, a three-phase analysis of Oja's iteration based on the diffusion approximation is proposed in \cite{li2017diffusion}. The analysis not only provides theoretical guarantees, but also offers a more intuitive streamline along which we could attain a better understanding on the global and local behaviours of Oja's iteration. However, their results are restricted to the circumstance where $p=1$ and $\lambda_{1}>\lambda_{2}$.
The forementioned theoretical studies indicate that in each step, Oja's update admits a constant variance upper bound resulting from the random samples, which leads to an optimal sublinear rate of $\mathcal{O}(1/k)$.
To speed up the convergence, VR-PCA \cite{shamir2015stochastic, shamir2016fast}
applies the variance reduction (VR) technique \cite{Johnson2013Accelerating} to Oja's iteration,
and obtains a faster linear rate which is however limited to the full-sampled case.
Tang \cite{tang2019exponentially} proved the local linear rate of the less-studied Krasulina's method  \cite{krasulina1969method}, whose update formula is closely related to Oja's iteration, but the result is valid only for data of low-rankness.

As being applied to practical computations, Oja-type algorithms 
require much more cares during the determination of stepsize.
The Robbins-Monro conditions \cite{Robbins1951A} suggest that, the stepsizes of Oja's method have to satisfy
$\sum_{k}\alpha^{(k)}=\infty$~and $ \sum_{k}\left(\alpha^{(k)}\right)^2<\infty$ in order to guarantee its convergence to the optimum.
Due to this reason, the commonly-used stepsize takes the form $\alpha^{(k)} = \gamma/(k+1)$.
In this diminishing regime, Oja's method behaves with high sensitivity to $\gamma$, 
and the optimal $\gamma$ has to be scaled with  
some quantities,
which needs to be known in advance and are usually inaccessible in practical implementations, e.g., the eigengap $(\lambda_p-\lambda_{p+1})$.
Some work \cite{lv2006global} attempts to design a new stepsize scheme, and however still involves hyperparameter tuning.
To be completely parameter-free, AdaOja \cite{henriksen2019adaoja} applies the simplified AdaGrad algorithm \cite{ward2019adagrad} designed for SGD to Oja's iteration in a column-wise fashion, taking the update in the form
\begin{align}
\notag b_{i}^{(k)}&=\sqrt{\left(b_{i}^{(k-1)}\right)^2+\left\Vert G^{(k)}_{(:,~i)}\right\Vert_{2}^{2}},\quad i =1,2,\cdots, p,\\
\label{eq:adaoja}X^{(k+1)}&=\orth\left\{X^{(k)}+
G^{(k)}\Diag^{-1}(b^{(k)}_{1},\cdots, b^{(k)}_{p})\right\},
\end{align}
where $G^{(k)}$ is Oja's direction in the $k$-th step, $b_{i}^{(k)}$~and~$G^{(k)}_{(:,~i)}$ denote the $i$-th element of $b^{(k)}$ and $i$-th column of $G^{(k)}$, respectively.
The AdaOja empirically demonstrates the ability of self-adjustment on the stepsize compared with the state-of-art OPCA algorithms.
But the numerical tests in \cite{henriksen2019adaoja} only address the explained variance, which is defined by the percentage of variance recovered from the original dataset, without presenting any results of the error measurement that is based on the canonical subspace angle.

Algorithms for offline PCA have been extensively studied in the past decades.
The conventional eigensolvers are built on Krylov subspace (e.g., \cite{saad1980rates, sleijpen2000jacobi, sorensen1997implicitly, stathopoulos1994davidson}), 
and approximate the eigenvectors in an incremental fashion without fulfilling the demands for parallel scalability and high efficiency at moderate accuracy.
These limitations could be fixed by developing block algorithms, and such algorithms are mostly derived from the following Rayleigh-Ritz trace minimization model over the Stiefel manifold
\begin{equation}
\label{eq:p1}\min_{X\in\mathbb{R}^{n\times p}}~~-\tr\left(X^\top \Sigma_{m} X\right),~~~~~\text{subject~to}~~X^\top X=I_{p}.
\end{equation}
The eigensolvers based on solving \cref{eq:p1} include SSI (see \cite{Golub1996Matrix,rutishauser1970simultaneous}), LOBPCG \cite{knyazev2001toward}, LMSVD \cite{liu2013limited} and ARRABIT \cite{wen2017accelerating}.
Some other works focus on constructing equivalent models 
to \cref{eq:p1} in generating top/bottom eigenspace, e.g., EigPen \cite{wen2016trace} for the trace-penalty minimization model and SLRPGN \cite{liu2015efficient} for the symmetric low-rank product (SLRP) model.
We refer to \cite{gao2019parallelizable, wang2021multipliers, xiao2020class} for recent progress in solving general problems over the Stiefel manifold.

In this paper, taking the advantage of the nonlinear least squares form, 
we focus on the SLRP model which formulates the PCA problem into
\begin{align}
\label{eq:p2}\min_{X\in\mathbb{R}^{n\times p}}~~f(X)=\frac{1}{2}\left\Vert XX^\top-\Sigma_{m}\right\Vert_{\text{F}}^2,
\end{align}
with the corresponding expectational counterpart
\begin{align}
\label{eq:p2E}\min_{X\in\mathbb{R}^{n\times p}}~~f_{\text{E}}(X)=\frac{1}{2}\left\Vert XX^\top-\Sigma\right\Vert_{\text{F}}^2.
\end{align}
To solve the SLRP model, Liu et al. \cite{liu2015efficient} proposed a Gauss-Newton (GN) method of minimum weighted-norm named as SLRPGN, which enjoys a simple explicit update rule and a local linear rate with the constant stepsize $\alpha^{(k)}= 1~(k=0, 1,\cdots)$.

\subsection{Contributions}\label{sec:intro-contri}
For PCA in the online fashion, we propose a new stochastic optimization algorithm named as the stochastic Gauss-Newton (SGN) method.
Numerical experiments on both the simulated and the real data demonstrate that SGN  
is stably effective with different choices of initial points 
when the constant stepsizes are used.
Furthermore, SGN is empirically robust with respect to (w.r.t.) the change of stepsize parameters as well as a variety of input data when the frequently-used diminishing stepsizes are adopted.
In addition, we develop an adaptive stepsize version of SGN named as AdaSGN,
which is based on the consistency of successive batches of online data reflected by the approximate objective functions of SLRP.
The AdaSGN shows much better numerical performance than the state-of-the-art adaptive OPCA algorithm AdaOja \cref{eq:adaoja}, and is comparable to SGN using the manually-tuned diminishing stepsizes.

Regarding to the theoretical aspect, based on the diffusion approximation,
we establish the weak convergence~(or convergence in distribution) of the SGN/properly-rescaled-SGN sequence in the infinitesimal stepsize regime to their corresponding continuous time process limits, which can be modeled by the solution of certain ordinary/stochastic differential equation (ODE/SDE).
Using these differential equation approximations, we obtain the global convergence rate of SGN with both the constant and diminishing stepsizes
for any $p\geq 1$ under the assumption $\lambda_{p}>\lambda_{n}$, which is weaker than the commonly-used positive eigengap assumption $\lambda_{p}>\lambda_{p+1}$.
The error bound for the diminishing stepisze case is optimal in the sense that it exactly matches the minimax lower bound given in \cite{vu2013minimax},
and this result is new among the existing related works.
However, due to an extra $\log$ factor, the bound for the constant stepsize case is only proved to be nearly optimal.

\subsection{Notations}
For given matrix $A\in\mathbb{R}^{n_{1}\times n_{2}}$, $\sigma_{\max}(A)$ and $\sigma_{\min}(A)$ stand for the largest and smallest singular values of $A$, respectively. 
The $(n_{1}n_{2})$-dimensional vector $\kvec(A)$ denotes the vectorization of $A$ which piles columns of $A$ on top of one another. 
The submatrices $A_{(i_{1}:i_{2}, j_{1}:j_{2})}$ and $A_{(:,j)}$ are composed of $i_{1}$-th to $i_{2}$-th rows and $j_{1}$-th to $j_{2}$-th columns of $A$ and the $j$-th column of $A$, respectively. 
Given an event $\mathcal{A}$, we use $\mathbbm{1}_{\{\mathcal{A}\}}$ to denote the indicator function with the value of one if $\mathcal{A}$ occurs, and otherwise the value of zero. 
We denote the $n$-dimensional indentity matrix by $I_{n}$ and the unit vector with the $i$-th coordinate being one and all others being zero by $e_{i}$. We denote $0_{n_{1}\times n_{2}}$ as the $n_{1}\times n_{2}$ zero matrix.
Given two functions $g(\alpha)$ and $h(\alpha)$, the relationships $g \asymp h$ and  $g\lesssim h$ imply $\limsup\limits_{\alpha\to 0}g(\alpha)/h(\alpha)=1$ and  $\limsup\limits_{\alpha\to 0}g(\alpha)/h(\alpha)\leq 1$, respectively.

\subsection{Organization}\label{sec:intro-org}
The rest of this paper is organized as follows.
We introduce the derivation of our proposed GN algorithms~(SGN and AdaSGN) in \cref{sec:alg} and  prove the convergence properties for SGN in the infinitesimal stepsize regime based on It$\hat{\text{o}}$ diffusion theory in \cref{sec:theory}.
The numerical results in \cref{sec:numeri} demonstrate the feasibility and advantages of the proposed GN algorithms.
Finally, a conclusion is made in \cref{sec:conclu}.

\section{Algorithm}\label{sec:alg}
In this section, we first introduce the SLRP model in the online setting, which has been proved to be equivalent to
PCA in the sense of eigen-space computation. We then present the derivation of SGN algorithm for solving the OPCA problem, followed by its adaptive-stepsize version.

\subsection{Online SLRP}\label{sec:alg-oslrp}
In view of the online setting, we first define a batch-$h$ approximation of $\Sigma_{m}$ in the $k$-th step: 
\begin{equation}\label{eq:batch-h}
\Sigma^{(k)}_{h}= \frac{1}{h}A^{(k+1)}_{h}
A^{(k+1)\top}_{h} = \frac{1}{h}\sum_{i=kh+1}^{kh+h}a^{(i)}a^{(i)\top},
\end{equation}
where the dynamic data block is given by
\begin{align}
\label{eq:samplebatch} A^{(k+1)}_{h}=[a^{(kh+1)}\quad a^{(kh+2)} ~\cdots~a^{(kh+h)}].
\end{align} 

\noindent Then \cref{eq:batch-h} yields the corresponding batch-$h$ approximation of \cref{eq:p2} in the $k$-th step:
\begin{align}
\label{eq:p2k}
\min_{X\in\mathbb{R}^{n\times p}}~~ \hat{f}^{(k)}(X)= \frac{1}{2}\left\Vert XX^\top-\Sigma^{(k)}_{h}\right\Vert_{\text{F}}^{2}.
\end{align}
The gradient of $\hat{f}^{(k)}(X)$ is given by
$$
\nabla \hat{f}^{(k)}(X) = 2\left(X(X^\top X)-\Sigma_{h}^{(k)}X\right).
$$

\subsection{SGN for OPCA}\label{sec:alg-sgn}
For simplicity, let $R^{(k)}(X)=(XX^\top-\Sigma^{(k)}_{h})$ be the residual in the $k$-th step.
The GN direction for the nonlinear least squares problem \cref{eq:p2k}, denoted as $S^{(k)}(X)$, could be obtained by solving the normal equation:
\begin{align}
\label{eq:normal}J(X)^\top J(X)(S)=-J(X)^\top R^{(k)}(X),
\end{align}
where $J(X): \mathbb{R}^{n\times p}\rightarrow \mathbb{R}^{n\times n}$ is the Jacobian operator of $R^{(k)}(X)$ at $X$, and $J(X)^\top: \mathbb{R}^{n\times n} \to \mathbb{R}^{n\times p}$ is the adjoint operator of $J(X)$ at $X$.
The rank deficiency of $J(X)$ makes the linear system formed by \cref{eq:normal} admit an infinite number of solutions, but fortunately, the solutions could be expressed explicitly (see Proposition 3.1 of \cite{liu2015efficient}).
In the same way as \cite{liu2015efficient}, we choose the one of minimum weighted-norm from the solution set of \cref{eq:normal} as our SGN direction:
\begin{equation}
\label{eq:direction}
S^{(k)}(X)
=\Sigma^{(k)}_{h}X(X^\top X)^{-1}-\frac{1}{2}X-\frac{1}{2}X(X^\top X)^{-1}X^\top \Sigma^{(k)}_{h}X(X^\top X)^{-1},
\end{equation}
which could be formulated into a three-step procedure: for $X = X^{(k)},~A = A^{(k+1)}_h$,
\begin{equation}
\label{eq:sgndir}
P = X(X^\top X)^{-1},~Q = \left. A^{\top}P\middle /\sqrt{h}\right.,~S^{(k)}(X) = \left. AQ\middle /\sqrt{h}\right.-\left. X(I_p+Q^\top Q)\middle/2\right..
\end{equation}

The SGN approach for OPCA is summarized in \cref{alg:sgn}.
\begin{algorithm}
	\caption{A SGN method for top-$p$ OPCA}
	\label{alg:sgn}
	\begin{algorithmic}[1]
		\REQUIRE{Samples $a^{(1)}, a^{(2)}, \cdots$; target dimension $p$; stepsizes $\alpha^{(k)}$; batchsize $h$.}
		\ENSURE{Choose rank-$p$ $\hat{X}^{(0)}\in\mathbb{R}^{n\times p}$ randomly and set $X^{(0)}=\orth\{\hat{X}^{(0)}\}$.}
		\FOR{$k=0, 1, 2, \cdots$}
		\STATE{Update the sample batch $A_{h}^{(k+1)}$ by \cref{eq:samplebatch}.}\label{alg1:step2}
		\STATE{Compute $S^{(k)}(X^{(k)})$ by \cref{eq:sgndir}.}\label{alg1:step3}
		\STATE{Iterate $X^{(k+1)}= X^{(k)}+\alpha^{(k)} S^{(k)}(X^{(k)})$.}\label{alg1:step4}
		\ENDFOR
	\end{algorithmic}
	{\bf Output:} $\orth\{X^{(k)}\}$.
\end{algorithm}

\subsection{AdaSGN for OPCA}\label{sec:alg-adasgn}
As being seen from \cref{eq:adaoja}, the AdaOja uses the stochastic gradients obtained from a single batch to adjust stepsizes, which is a widely used strategy in online algorithms.
Taking into account the limitation of incomplete knowledge from a single batch, our adaptive stepsize scheme focuses on the consistency between successive sample batches, which is characterized by the approximated objective function $\hat{f}^{(k)}(X)$ given by \cref{eq:p2k}.
Since $X^{(k)}$ is derived from the subproblem of minimizing~$\hat{f}^{(k-1)}(X)$, it is clear that
\begin{align} 
\label{eq:consis}
\hat{f}^{(k-1)}(X^{(k)})\leq \hat{f}^{(k-1)}(X^{(k-1)}),
\end{align} 
always holds.
If the new sample batch $A_{h}^{(k+1)}$ maintains consistency with history samples in the sense of \cref{eq:consis}, i.e., satisfying $\hat{f}^{(k)}(X^{(k)})\leq \hat{f}^{(k)}(X^{(k-1)})$, we make the stepsize larger or unchanged; otherwise we decrease the stepsize. 	

To describe the extent of consistency, we define a parameter-free indicator $r^{(k)}$ as
\begin{equation}
\label{eq:adaindicator}
r^{(k)}= 
\begin{cases}
\left. \hat{f}^{(k)}(X^{(k-1)}) \middle/\hat{f}^{(k)}(X^{(k)})\right.	& \hat{f}^{(k)}(X^{(k)})> \hat{f}^{(k)}(X^{(k-1)})\\
\quad\quad\quad \quad\quad\quad 0& \text{otherwise}
\end{cases}
\end{equation}
for $k>0$ and $r^{(0)} = 1$. 
Then, we set the stepsize by
\begin{equation}
\label{eq:ada}
\alpha^{(k)}= 
\begin{cases}
\left. r^{(k)} \middle/\sum^{k}_{i=0} r^{(i)}\right.\quad\quad& \hat{f}^{(k)}(X^{(k)})> \hat{f}^{(k)}(X^{(k-1)})\\
\quad\left.1 \middle/\sum^{k}_{i=0} r^{(i)}\right.\quad\quad& \text{otherwise}
\end{cases}
\quad\in[0, 1).
\end{equation}
For the poorly-consistent sample case where $\hat{f}^{(k)}(X^{(k)})> \hat{f}^{(k)}(X^{(k-1)})$, smaller $r^{(k)}$ implies less consistent sample, thus we provide a smaller $\alpha^{(k)}$ according to~$a_{1}/(c+a_{1})>a_{2}/(c+a_{2})$~for $a_{1}>a_{2}>0,~c>0$. 
For the well-consistent sample case where $\hat{f}^{(k)}(X^{(k)})\leq \hat{f}^{(k)}(X^{(k-1)})$, our scheme makes AdaSGN yield no monotonic decrease in stepsize but an overall decreasing trend. This  strategy has advantages in such circumstance as many extremely bad samples are received in the early stage.

The SGN using the stepsize scheme stated above is generalized in \cref{alg:adasgn}.

\begin{algorithm}[h]
	\caption{AdaSGN for OPCA} 
	\label{alg:adasgn}
	\begin{algorithmic}[1]
		\REQUIRE{
			Samples $a^{(1)}, a^{(2)}, \cdots$; 
			target dimension $p$; batchsize $h$.}
		\ENSURE{Choose rank-$p$ $\hat{X}^{(0)}\!\in\!\mathbb{R}^{n\times p}$ randomly and set $X^{(0)}\!\!=\!\orth\{\hat{X}^{(0)}\!\}$, $r^{(0)}\!=\!1.\!$}
		\FOR{$k=0, 1, 2, \cdots$}
		\STATE{Update $A_{h}^{(k+1)}$ according to \cref{eq:samplebatch}.}
		\IF{$k>0$}
		\STATE{Compute $\hat{f}^{(k)}(X^{(k-1)}),~ \hat{f}^{(k)}(X^{(k)})$ by \cref{eq:p2k}, and $r^{(k)}$  by \cref{eq:adaindicator}.}
		\ENDIF
		\STATE{Update the stepsize $\alpha^{(k)}$ by \cref{eq:ada}.}
		\STATE{Perform Step \ref{alg1:step3} and Step \ref{alg1:step4} in \cref{alg:sgn}.}
		\ENDFOR
		\STATE{{\bf Output:} $\orth\{X^{(k)}\}$.}
	\end{algorithmic}
\end{algorithm}

\section{Theory of algorithm}\label{sec:theory}
In this section, 
since the sequence $\{X^{(k)}\}$ generated by SGN algorithm constitutes a discrete-time Markov process,
we use ODE/SDEs to characterize the SGN sequence with infinitesimal stepsize in the weak sense,
and prove its global convergence rate based on these differential equations.
We begin with some basic concepts and analytical techniques in \cref{sec:theory-pre}. In  \cref{sec:theory-main}, we present the convergence results of SGN in the constant stepsize regime, along with detailed proofs.
The last subsection is devoted to investigating the diminishing stepsize case.

\subsection{Preliminaries}\label{sec:theory-pre}
We firstly revisit some basic definitions.
\begin{definition}[Sub-Gaussian random vector]
	A random vector $v\in\mathbb{R}^n$ is said to follow a sub-Gaussian distribution if there exists a constant $\sigma^2>0$~(called variance proxy) such that for any $s\in\mathbb{R}$ and unit vector $c\in\mathbb{R}^n$, the following inequality holds
	\begin{equation*}
	\mathbb{E}\exp\left\{s(c^\top v-c^\top\mathbb{E}v)\right\}\leq\exp\left\{\sigma^2 s^2/2\right\}.
	\end{equation*}
\end{definition}
In order to carry out the analysis, 
we assume that 

[A3] (sub-Gaussian distribution)~\emph{$a\in\mathbb{R}^n$ is a sub-Gaussian random vector.}

\noindent It is known that a sub-Gaussian vector has finite moments of any order. This property will be used to prove the expectational boundedness and full-rankness of SGN iterates in \cref{lemma:bound2} and \cref{lemma:fullrank}.

We assume for distinguishing the top PCs from the remaining ones that

[A4]~\emph{$\lambda_{p}>\lambda_{n}$.}

\begin{definition}\label{def:GTD}
	Under the assumption [A4],
	we define $p'$ as the smallest index satisfying $\lambda_{p'+1}<\lambda_{p}$. Namely,
	$$p'=\min\{i~\vert~p\leq i\leq n-1,~\lambda_{i+1}<\lambda_{p}\}.$$
	We denote $\nu = \lambda_{p}-\lambda_{p'+1}$.
\end{definition}

\noindent 
Note that the assumption [A4] covers a frequently-used requirement $\lambda_{p}>\lambda_{p+1}$ for the analysis of OPCA algorithms when $p'$ coincides with
$p$.
Given $X\in\mathbb{R}^{n\times p}$, let $\bar{X}= X_{(1:p',~:)}\in\mathbb{R}^{p'\times p}$ and $\wideubar{X} = X_{((p'+1):n,~:)}\in\mathbb{R}^{(n-p')\times p}$.

\begin{definition}[Canonical angles between subspaces] 
	Let $X_1\in\mathbb{R}^{n\times p_{1}}$ and $X_2\in\mathbb{R}^{n\times p_{2}}~(p_1\leq p_2\leq n)$ be two matrices with orthonormal columns. The canonical or principal angles between two subspaces spanned by the columns of $X_1$ and $X_2$ are 
	\begin{align*}
	\theta_{i} = \arccos \sigma_{i}\in[0, \pi/2],\quad i = 1, 2, \cdots, p_1,
	\end{align*}
	where $\sigma_{i}$~are singular values of $X_1^\top X_2$. 
	Denote $\Theta(X_1, X_2) = \Diag(\theta_{1}, \cdots, \theta_{p_1})$ and $\sin\Theta(X_1, X_2) = \Diag(\sin\theta_{1}, \cdots, \sin\theta_{p_1})$.
\end{definition}
The error of the $k$-th iterate $X^{(k)}$ is measured by
\begin{align}\label{eq:measure} \Vert\sin\Theta(X^{(k)}, U_{p'})\Vert_{\text{F}}^{2}=p-\Vert U_{p'}^\top \orth\{X^{(k)}\}\Vert_{\text{F}}^{2},
\end{align}
where $U_{p'} = [u_{1}, u_{2}, \cdots, u_{p'}]$.
Without loss of generality, the covariance is assumed to be of a diagonal form $\Sigma = \Diag(\lambda_{1},\lambda_{2},\cdots,\lambda_n)$
hereinafter, and the corresponding error measurement becomes 
\begin{equation}\label{eq:error2}
\Vert\sin\Theta(X^{(k)}, U_{p'})\Vert_{\text{F}}^{2}=p-\tr\left(\bar{X}^{(k)\top}\bar{X}^{(k)}(X^{(k)\top} X^{(k)})^{-1}\right).
\end{equation}

As mentioned earlier, the SGN sequence $\{X^{(k)}\}$ could be viewed as a discrete-time Markov process.
The basis of the theory of SGN is the well-known diffusion approximation technique, 
of which the fundamental idea is to characterize the given analytically intractable stochastic process by some certain diffusion processes (modeled by the solution to some SDEs) with sufficiently good and useful properties.

Considering the case of $p=1$~and~$\alpha^{(k)}= \alpha~(k=0, 1,\cdots)$, where the SGN update reads
\begin{equation}\label{eq:sgn_sde}
X^{(k+1)} = X^{(k)}+\alpha S_{\text{E}}(X^{(k)})
+\sqrt{\alpha} \left[\sqrt{\alpha}\left(S^{(k)}(X^{(k)})-S_{\text{E}}(X^{(k)})\right)\right],
\end{equation}
with $S_{\text{E}}(X^{(k)}) = \mathbb{E}[S^{(k)}(X^{(k)})]$. 
By taking $\alpha = \Delta t$ and comparing formula \cref{eq:sgn_sde} with the Euler discretization
\begin{align*}
X^{(k+1)} = X^{(k)}+\Delta t b(X^{(k)})+\sqrt{\Delta t}\sigma(X^{(k)})\xi^{(k)},\quad \xi^{(k)}\sim\mathcal{N}(0, I_n),
\end{align*}
of some SDE in the form
\begin{align}\label{eq:ito}
dX(t)=b(X(t))dt+\sigma(X(t))dB(t),
\end{align}
we could then naturally expect \cref{eq:ito} with certain drift coefficient $b(X)$ and diffusion coefficient $\sigma(X)$ to be a distributional approximation of the discrete SGN sequence.

As a formalization of the above intuitive statement, Corollary 4.2 in Section 7.4 of \cite{ethier1986markov} suggests that the main issues in the convergence analysis (of the constant stepsize case) consist of
(i)~constructing a continuous-time process
\begin{equation*}
X_{\alpha}(t)= X^{(\lfloor t\alpha^{-1}\rfloor)},
\end{equation*} 
from the discrete-time SGN sequence $\{X^{(k)}\}$ using the constant stepsize $\alpha^{(k)} = \alpha\in(0, 1]$ by piecewise constant interpolation;
(ii)~finding the limiting SDEs \cref{eq:ito} of vectorized $X_{\alpha}(t)$ in distribution as $\alpha\to 0$
via calculating the infinitesimal mean $b$ and variance $a=\sigma\sigma^\top$ defined by
\begin{align}\label{eq:infinitemean}
b(X)&=\frac{d}{dt}\mathbb{E}[\kvec(X(t))] =\lim_{\alpha\to 0} \frac{1}{\alpha}\mathbb{E}[\kvec(\Delta X_{\alpha}(t))\vert X_{\alpha}(t)=X],\\
\label{eq:infinitevariance} 
a(X)&=\frac{d}{dt}\text{Var}[\kvec(X(t))]=\!\lim_{\alpha\to 0} \frac{1}{\alpha} \mathbb{E}[\kvec(\Delta X_{\alpha}(t))\kvec(\Delta X_{\alpha}(t))^\top\vert X_{\alpha}(t)=X],
\end{align}
where $\kvec(\Delta X_\alpha(t)) = \kvec(X_\alpha(t+\alpha)-X_\alpha(t))$;
(iii)~using the derived SDE approximation to learn the properties of SGN iterates.

\subsection{Main results on constant stepsizes}\label{sec:theory-main}
We first provide the main results of the constant stepsize case in \cref{th:odeconvergence} and \cref{th:rate}, and delay their proofs until later in this subsection.

The following theorem states the global convergence of SGN with the constant stepsize.
\begin{theorem}\label{th:odeconvergence}
	Under the assumptions [A1]-[A4],
	the process $\{X^{(\lfloor t\alpha^{-1}\rfloor)}\}$ generated by \cref{alg:sgn} with the stepsize $\alpha^{(k)}= \alpha~(k=0, 1,\cdots)$ converges in distribution to the solution of
	\begin{align}\label{eq:ode}	\frac{dX}{dt}=\Sigma X(X^\top X)^{-1}-\frac{1}{2}X-\frac{1}{2}X(X^\top X)^{-1}X^\top\Sigma X(X^\top X)^{-1},
	\end{align}
	as $\alpha\to 0$ with an initial value $X(0)=X^{(0)}$. 
	Furthermore, the solution $X(t)$ to \cref{eq:ode} satisfies
	\begin{align*}
	\lim\limits_{t\to\infty}\nabla f_{\text{E}}(X(t)) = 0.
	\end{align*}
\end{theorem}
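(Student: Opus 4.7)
The plan is to treat the two claims separately. For the weak-convergence claim, I would directly apply the framework laid out by the authors just before the theorem, namely Corollary~4.2 in Section~7.4 of \cite{ethier1986markov}: construct the piecewise-constant interpolation $X_{\alpha}(t)=X^{(\lfloor t\alpha^{-1}\rfloor)}$, compute its infinitesimal mean $b(X)$ and infinitesimal variance $a(X)$ via \cref{eq:infinitemean,eq:infinitevariance}, and identify them as the drift and (zero) diffusion of the target ODE. For the ODE-convergence claim, I would take $f_{\text{E}}$ as a Lyapunov function and close with a LaSalle-type invariance argument.

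For part~(i), the increment satisfies $\Delta X_{\alpha}(t)=\alpha S^{(k)}(X^{(k)})$, so conditioning on $X_{\alpha}(t)=X$ and using assumption [A1] (which yields $\mathbb{E}[\Sigma^{(k)}_{h}]=\Sigma$), formula \cref{eq:direction} gives
\begin{equation*}
b(X)=\mathbb{E}\!\left[S^{(k)}(X)\,\big|\,X_{\alpha}(t)=X\right]=\Sigma X(X^{\top}X)^{-1}-\tfrac{1}{2}X-\tfrac{1}{2}X(X^{\top}X)^{-1}X^{\top}\Sigma X(X^{\top}X)^{-1},
\end{equation*}
which matches the right-hand side of \cref{eq:ode}. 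For the infinitesimal variance, the identity $\kvec(\Delta X_{\alpha})\kvec(\Delta X_{\alpha})^{\top}=\alpha^{2}\,\kvec(S^{(k)})\kvec(S^{(k)})^{\top}$ yields $a(X)=\lim_{\alpha\to 0}\alpha\,\mathbb{E}[\kvec(S^{(k)})\kvec(S^{(k)})^{\top}\mid X]=0$ as soon as $S^{(k)}(X)$ has a uniformly bounded second moment. The sub-Gaussian hypothesis [A3] together with the forthcoming boundedness (\cref{lemma:bound2}) and full-rank (\cref{lemma:fullrank}) lemmas supply the required control on $\|\Sigma^{(k)}_{h}\|$ and $\sigma_{\min}(X^{(k)\top}X^{(k)})$, and weak convergence of $X_{\alpha}(\cdot)$ to the unique solution of \cref{eq:ode} then follows from the cited theorem.

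For part~(ii), I would compute $\tfrac{d}{dt}f_{\text{E}}(X(t))$ along the ODE. Writing $H=X^{\top}X$ and $G=(XX^{\top}-\Sigma)X=\tfrac{1}{2}\nabla f_{\text{E}}(X)$, the identity $\Sigma X=XH-G$ simplifies the right-hand side of \cref{eq:ode} to
\begin{equation*}
S_{\text{E}}(X)=-\left(I_{n}-\tfrac{1}{2}XH^{-1}X^{\top}\right)GH^{-1}.
\end{equation*}
Since $XH^{-1}X^{\top}$ is the orthogonal projection onto the column space of $X$, the matrix $I_{n}-\tfrac{1}{2}XH^{-1}X^{\top}$ is positive definite with eigenvalues in $[1/2,1]$, and therefore
\begin{equation*}
\frac{d}{dt}f_{\text{E}}(X(t))=\langle\nabla f_{\text{E}}(X),\,S_{\text{E}}(X)\rangle=-2\,\tr\!\left(G^{\top}\!\!\left(I_{n}-\tfrac{1}{2}XH^{-1}X^{\top}\right)GH^{-1}\right)\le -\|GH^{-1/2}\|_{\text{F}}^{2},
\end{equation*}
which is non-positive and vanishes precisely at $\nabla f_{\text{E}}(X)=0$. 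Coercivity of $f_{\text{E}}$ confines $X(t)$ to a compact sublevel set, and together with preserved full-rankness of $X^{\top}X$ along the ODE, LaSalle's invariance principle gives $\omega(X(0))\subset\{X:\nabla f_{\text{E}}(X)=0\}$, whence $\nabla f_{\text{E}}(X(t))\to 0$.

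The main obstacles lie on two fronts. First, the variance-vanishing step of~(i) demands uniform-in-$\alpha$ control of $\mathbb{E}\|S^{(k)}\|^{2}$, which hinges not only on an upper bound for $\|X^{(k)}\|$ but, more delicately, on a lower bound for $\sigma_{\min}(X^{(k)\top}X^{(k)})$ to prevent $(X^{\top}X)^{-1}$ from blowing up; sub-Gaussian tail estimates from [A3] are indispensable here and feed into \cref{lemma:bound2,lemma:fullrank}. Second, the LaSalle step in~(ii) requires $X^{\top}X$ to stay uniformly non-singular along the continuous-time trajectory of the limiting ODE itself, which has to be verified for the limiting dynamics rather than inherited from the discrete algorithm. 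I expect this continuous-time non-singularity preservation, together with the moment control above, to be the technically most delicate points.
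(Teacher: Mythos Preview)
Your proposal is correct and follows essentially the same route as the paper: part~(i) is exactly \cref{lemma:kT1} (infinitesimal mean/variance plus Ethier--Kurtz), and part~(ii) uses $f_{\text{E}}$ as a Lyapunov function along \cref{eq:ode}. The only cosmetic difference is that the paper splits the drift into orthogonal pieces $P_{1}+P_{2}$ (with $P_{1}$ in $\mathrm{col}(X)^{\perp}$, $P_{2}$ in $\mathrm{col}(X)$) to obtain $\tfrac{d}{dt}f_{\text{E}}=-2(\|P_{1}X^{\top}\|_{\text{F}}^{2}+2\|P_{2}X^{\top}\|_{\text{F}}^{2})$, whereas you write the drift as $-(I-\tfrac{1}{2}XH^{-1}X^{\top})GH^{-1}$; the two computations are algebraically equivalent, and your explicit mention of LaSalle and the need for full-rank preservation along the ODE is, if anything, a bit more careful than the paper's argument.
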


\vspace{-0.7cm}
\begin{figure}[H]
	\centering
	\mbox{
		\hspace{-0.5cm}
		\subfigure[$\lambda_{1} = 1.1$]{
			\includegraphics[width=4cm]{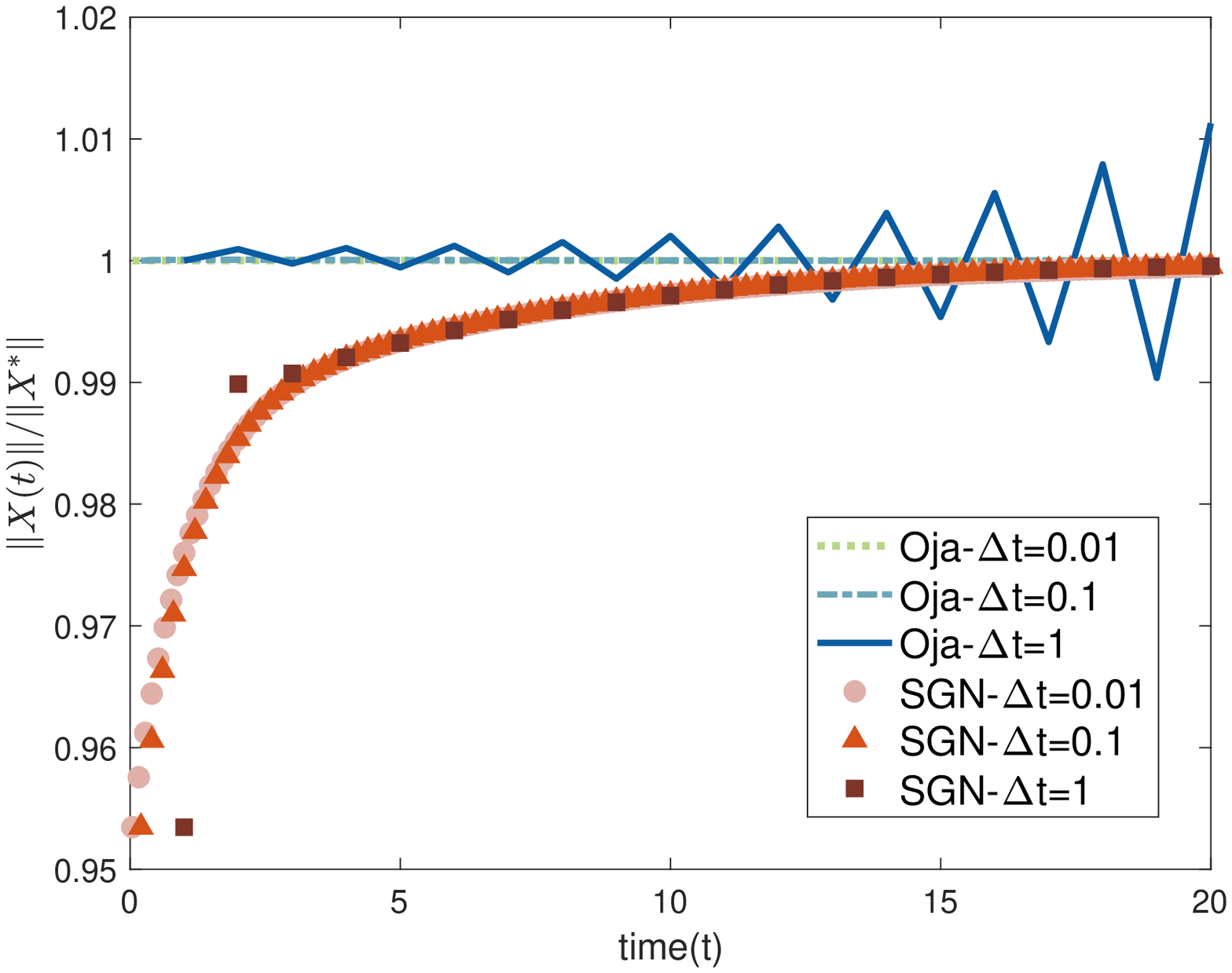}}
		\hspace{-0.5cm}
		\subfigure[$\lambda_{1} = 11$]{
			\includegraphics[width=4cm]{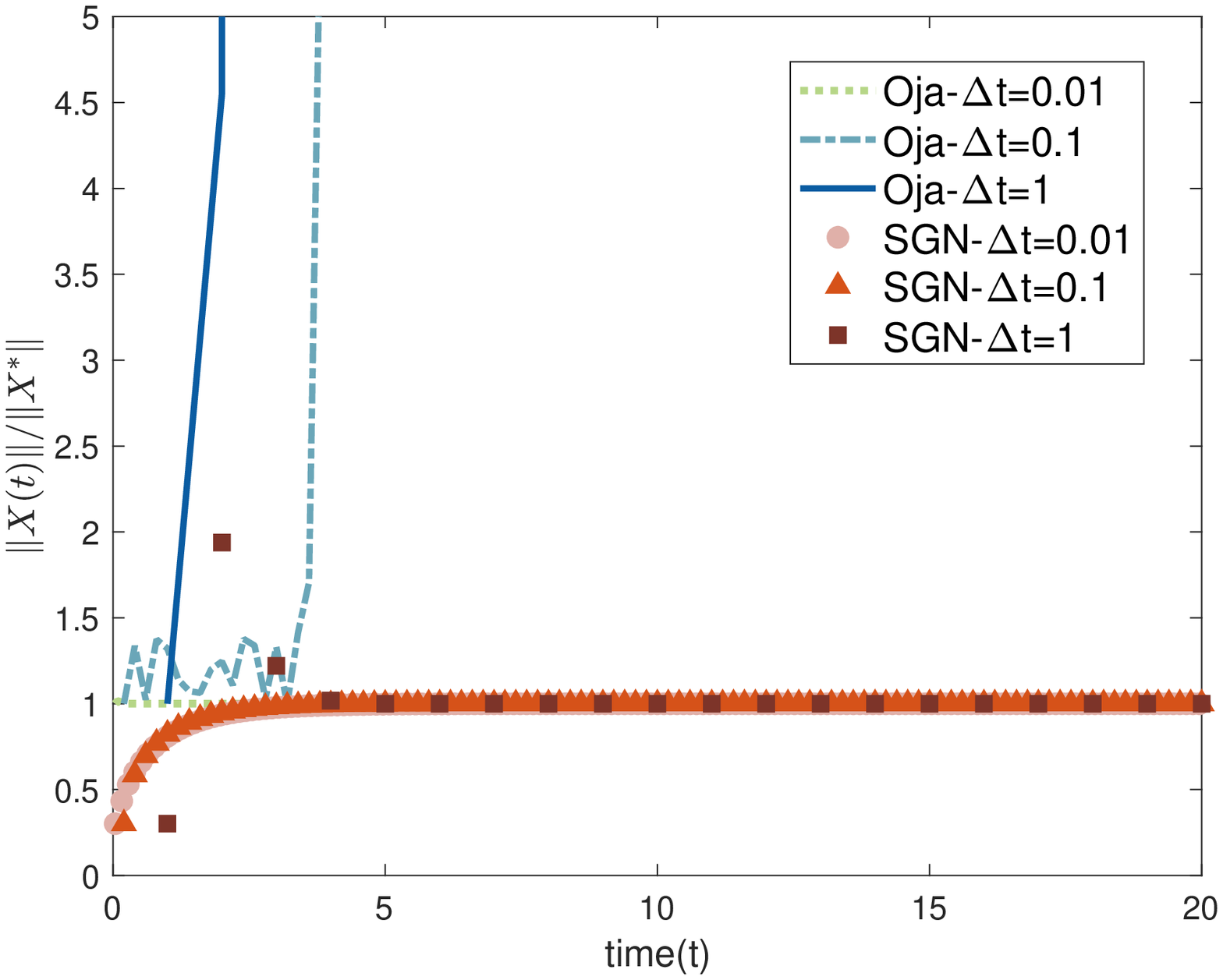}}
		\hspace{-0.5cm}
		\subfigure[$\lambda_{1} = 101$]{
			\includegraphics[width=4cm]{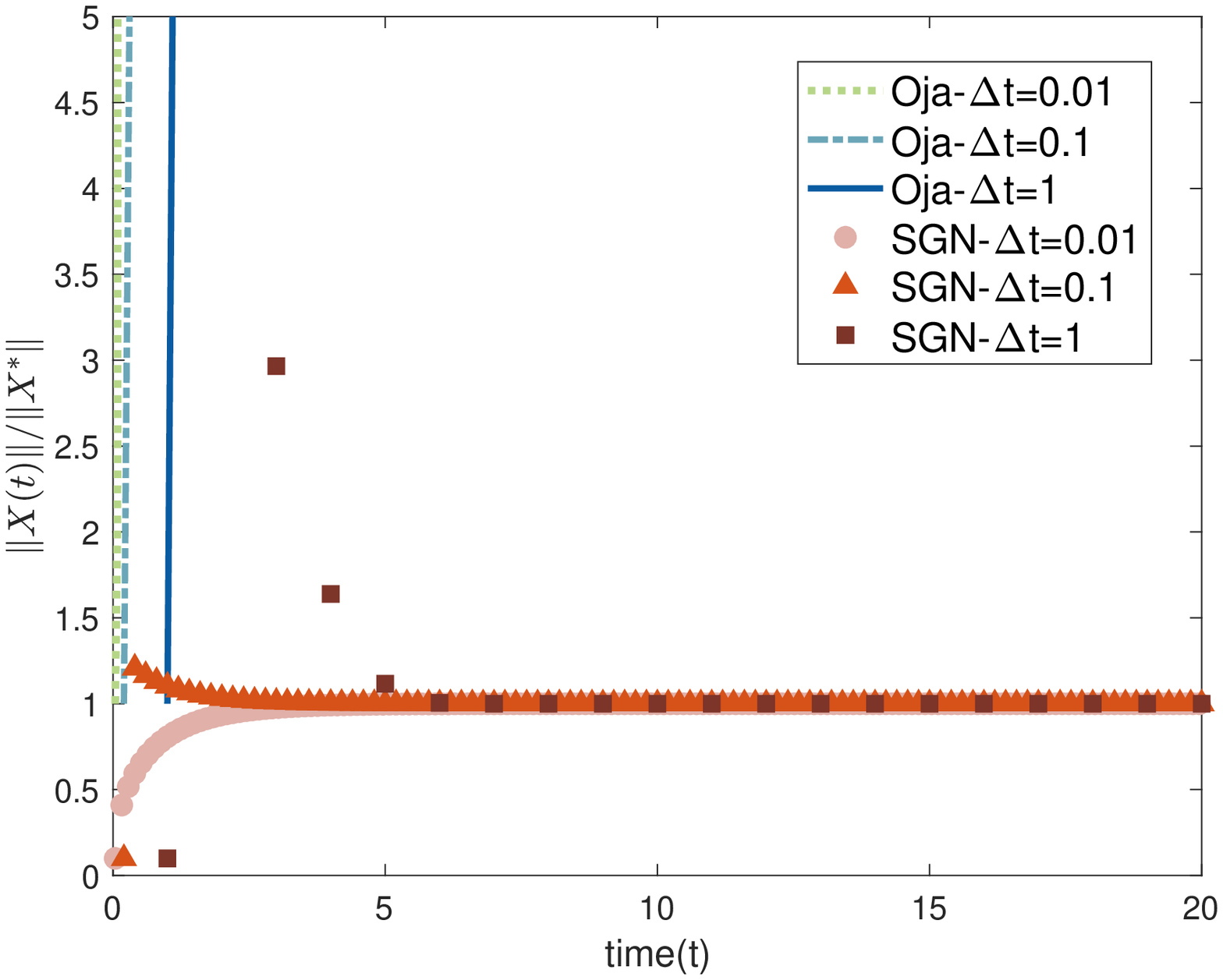}}
		\hspace{-0.5cm}
		\subfigure[$\lambda_{1} = 1001$]{
			\includegraphics[width=4cm]{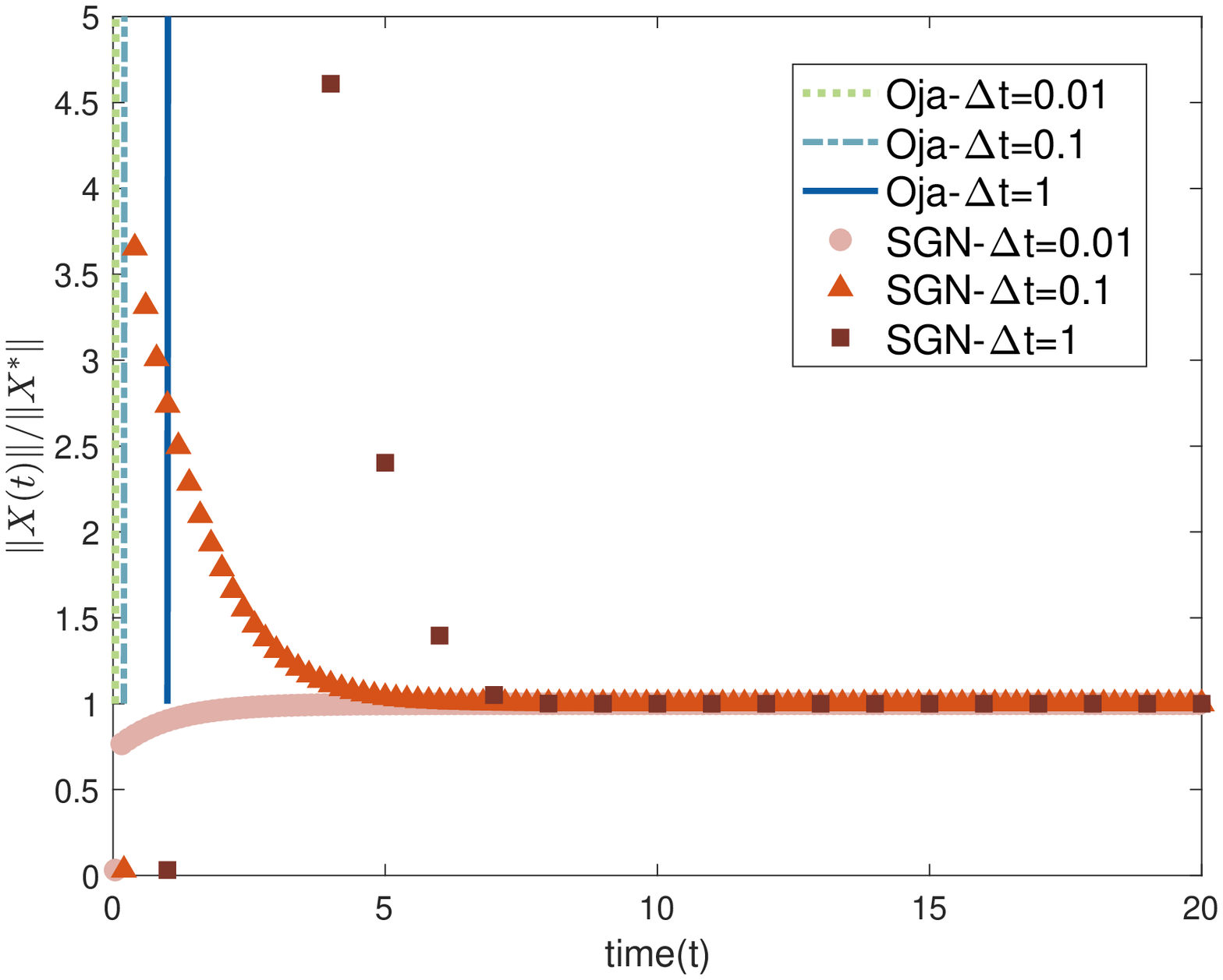}}
		\hspace{-0.5cm}
	}
	\label{fig:ode}
	\vspace{-0.3cm}
	\caption{Finite difference discretization of the limiting ODE for Oja's iteration (Theorem 3.1 in \cite{li2017diffusion}) and the one for SGN \cref{eq:ode} with various stepsize $\Delta t$~($n=500,~p=1,~ \lambda_{2}=\cdots = \lambda_{500}=1$).}
\end{figure}
Using the classic forward difference approximations, we provide a simple comparison between the discretization of the limiting ODE for Oja's iteration (Theorem 3.1 in \cite{li2017diffusion}) and the one of \cref{eq:ode} for SGN in \cref{fig:ode}, where the numerical instability of Oja's method could be apparently observed as the top eigenvalue $\lambda_{1}$ increases.

Then, we present the convergence rate of SGN in terms of the expectational error.
\begin{theorem}\label{th:rate}
	Suppose that the assumptions [A1]-[A4] hold.
	Given the total sample size $m>0$, and the stepsize
	\begin{equation}\label{eq:fixedstepsize}
	\alpha^{(k)}= \alpha=\frac{\lambda_{p}}{\nu}K^{-1}\log K,\quad~K = \left\lceil\frac{m}{h}\right\rceil,\quad~k=0, 1,\cdots, K,
	\end{equation}
	the expectational error of the final output $X^{(K)}$ of \cref{alg:sgn} is given by
	\begin{align}\label{eq:rate}
	\mathbb{E}\Vert\sin\Theta(X^{(K)}, U_{p'})\Vert_{\text{\rm F}}^{2}&\lesssim \frac{C}{h}\frac{\lambda_{p}\log K}{\nu K}\sum_{l=p'+1}^{n}\sum_{r=1}^{p}\frac{\lambda_{l}}{2(\lambda_{r}-\lambda_{l})}\\
	\notag&\leq \frac{C}{2h}p(n-p')\frac{\lambda_{p}\lambda_{p'+1}}{\nu^2}K^{-1}\log K,
	\end{align}
	where $C$ is some positive constant.
\end{theorem}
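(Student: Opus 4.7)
The plan is to upgrade the ODE approximation of \cref{th:odeconvergence} to a full diffusion-level approximation whose infinitesimal variance controls the asymptotic error of SGN, and then track the evolution of the canonical-angle error through an It\^{o}-based Lyapunov argument. First, following the recipe outlined in step (iii) of \cref{sec:theory-pre}, I would evaluate the infinitesimal mean $b(X)$ and variance $a(X) = \sigma(X)\sigma(X)^\top$ defined by \cref{eq:infinitemean}--\cref{eq:infinitevariance}. The drift $b(X)$ is already identified with the right-hand side of \cref{eq:ode} by \cref{th:odeconvergence}. For $a(X)$, only the linear-in-noise component $\Sigma_h^{(k)} X(X^\top X)^{-1}$ of \cref{eq:sgndir} contributes at order $\alpha$, since the remaining quadratic and quartic corrections are of order $\alpha^2$; its covariance, by \cref{eq:batch-h}, scales as $1/h$ and factors cleanly through the diagonal form of $\Sigma$. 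Assumption [A3], together with the expectational boundedness and full-rankness of the SGN iterates (to be obtained from the sub-Gaussian moment estimates in the lemmas referred to earlier), furnishes the integrability needed to invoke Corollary 4.2 of \cite{ethier1986markov} for weak convergence to the limiting SDE.

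With the SDE at hand, I would apply It\^{o}'s formula to $V(X) := \Vert \sin\Theta(X, U_{p'})\Vert_{\text{F}}^{2} = p - \tr(\bar{X}^\top \bar{X}(X^\top X)^{-1})$ from \cref{eq:error2}. The drift contribution decomposes into directional derivatives of $V$ along the right-hand side of \cref{eq:ode}; exploiting $\Sigma = \Diag(\lambda_1, \ldots, \lambda_n)$, these can be rewritten coordinate-by-coordinate as combinations weighted by eigenvalue gaps $\lambda_r-\lambda_l$ with $r\leq p$ and $l\geq p'+1$, giving a leading drift of at least $-(\nu/\lambda_p)V(X)$. The diffusion contribution, read from $\tfrac{1}{2}\tr(a(X)\nabla^2 V(X))$, produces the variance prefactor $\sum_{l\geq p'+1, r\leq p}\lambda_l/(2(\lambda_r-\lambda_l))$ modulo lower-order terms.

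Taking expectations and applying Gr\"{o}nwall's inequality over $t\in[0, K\alpha]$ then yields the bias--variance decomposition
\begin{equation*}
\mathbb{E}[V(X(K\alpha))] \;\lesssim\; e^{-(\nu/\lambda_p)K\alpha}\,V(X(0)) \;+\; \frac{\alpha\lambda_p}{h\nu}\sum_{l=p'+1}^{n}\sum_{r=1}^{p}\frac{\lambda_l}{2(\lambda_r-\lambda_l)}.
\end{equation*}
Substituting the prescribed $\alpha = (\lambda_p/\nu)K^{-1}\log K$ sends the bias term to $O(K^{-1})$, which is dominated by the variance term of order $(\log K)/K$, so the latter determines the rate and yields \cref{eq:rate}; the $\lesssim$ symbol then absorbs the discrete-to-continuous approximation error as $\alpha\to 0$, together with the elementary upper bound $\sum_l \sum_r \lambda_l/(\lambda_r-\lambda_l) \leq p(n-p')\lambda_{p'+1}/\nu$ to produce the second displayed inequality.

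The main obstacle, in my view, is twofold. First, extracting the contraction rate $\nu/\lambda_p$ from the It\^{o} drift of $V$ along \cref{eq:ode} is delicate because $V$ is a rational non-convex function of $X$, and a brute-force Hessian linearization at a Stiefel optimum is not available; one must instead exploit the global structure of the SLRP residual together with \emph{only} the relaxed gap assumption [A4] to argue gap-by-gap decay for every pair $(r,l)$ with $\lambda_r>\lambda_l$. Second, pinning down the variance prefactor in the precise form $\lambda_l/(\lambda_r-\lambda_l)$ requires a careful fourth-moment calculation for $\Sigma_h^{(k)}X(X^\top X)^{-1}$ and matching the Jacobian structure in \cref{eq:normal} with the diffusion tensor $a(X)$, so that the eigengap appears in the denominator with the correct multiplicity that is ultimately responsible for the minimax-optimal dependence claimed after \cref{eq:rate}.
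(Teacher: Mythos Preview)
Your approach diverges substantially from the paper's, and it has a genuine gap.

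The paper does not run a single Gr\"onwall/Lyapunov argument on $V(X)=\Vert\sin\Theta(X,U_{p'})\Vert_{\rm F}^2$. Instead it splits the trajectory into three phases: (1) escape from a saddle, handled via the \emph{rescaled} process $Y_\alpha=\alpha^{-1/2}X_\alpha V$ of \cref{lemma:kT2}, whose off-block entries solve explicit Ornstein--Uhlenbeck SDEs; (2) transit far from stationary points, handled purely by the ODE \cref{eq:ode} through the monotone functional $R=\tr(X^\top\Sigma X(X^\top X)^{-1})$; (3) fluctuation near the optimum, again via the rescaled SDE of \cref{lemma:kT2}. The factor $\lambda_l/(2(\lambda_r-\lambda_l))$ in \cref{eq:rate} is the stationary variance of the per-entry OU process in Phase~3, obtained by integrating $\mathbb{E}y_{lr}^2(t)$ in closed form, not by reading off $\tfrac12\tr(a(X)\nabla^2 V(X))$.

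Your global contraction claim---that the drift of $V$ along \cref{eq:ode} is at most $-(\nu/\lambda_p)V(X)$---is where the plan breaks. At any non-optimal stationary point of \cref{eq:p2E}, in particular at the ``lowest saddle'' the paper explicitly initializes from in Phase~1, the ODE drift of $V$ vanishes while $V(X)>0$, so no such linear inequality can hold globally and the Gr\"onwall bound stalls. This is exactly why the rescaling in \cref{lemma:kT2} is needed: on the \emph{unscaled} process the infinitesimal variance computed from \cref{eq:infinitevariance} is $\mathcal{O}(\alpha)$ and vanishes as $\alpha\to 0$ (this is the content of \cref{lemma:kT1}), so your ``full diffusion-level approximation'' on $X_\alpha$ itself is still the ODE and cannot supply the noise that drives saddle escape. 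Relatedly, the eigengap denominator $(\lambda_r-\lambda_l)$ does not come from the Hessian of $V$; it is the mean-reversion rate of the OU equation \cref{eq:ksde} and appears only after time-integration, which your It\^{o}-plus-Gr\"onwall outline does not reproduce.
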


\begin{remark}[Comparison with previous work]\label{remark: comparison}
	When taking $p=p'=1$, our error bound \cref{eq:rate} is coincident with the bound of \cite{li2017diffusion} for Oja's iteration, where the stepsize is chosen to be $\alpha^{(k)}=\alpha = \nu^{-1}K^{-1}\log K$. This suggests that when $\lambda_{1}$ is large, our SGN allows for a larger stepsize to achieve a fixed order of accuracy.
\end{remark}

\begin{remark}[Constant variance upper bound]\label{remark:variance}
	As the update goes to the optimum, for $X=X^{(k)},~\Sigma_c=\Sigma^{(k)}_{h}-\Sigma$, the variance statistics \cite{tropp2015introduction} of one-step Oja's iteration and SGN in the Frobenius norm sense in the $k$-th update are
	\begin{align}
	\label{eq:var_oja}v\left(X^{(k+1)}_{\text{Oja}}\right) &=(\alpha^{(k)})^2\left\Vert X^\top \mathbb{E}[\Sigma_c^2] X\right\Vert_{\text{F}}\to (\alpha^{(k)})^2\left\Vert U_{p}^\top \mathbb{E}\Sigma_c^2 U_{p}\right\Vert_{\text{F}},\\
	v\left(X^{(k+1)}_{\text{SGN}}\right)\notag&= (\alpha^{(k)})^2\left\Vert \mathbb{E}\left[P^\top \Sigma_c\left(I-3PX^\top/4\right)\Sigma_cP\right]\right\Vert_{\text{F}}\\
	\label{eq:var_sgn}&\leq (\alpha^{(k)})^2\left\Vert P^\top \mathbb{E}[\Sigma_c^2] P\right\Vert_{\text{F}}\to(\alpha^{(k)})^2\left\Vert \bar{\Lambda}^{-\frac{1}{2}}U_{p}^\top \mathbb{E}[\Sigma_c^2] U_{p}\bar{\Lambda}^{-\frac{1}{2}}\right\Vert_{\text{F}},
	\end{align}
	where $\bar{\Lambda}=\Diag(\lambda_{1},\cdots, \lambda_{p}),~U_{p}=[u_{1},\cdots, u_{p}]$ and $P$ is given by \cref{eq:sgndir}. 
	It could be easily verified from \cref{eq:var_oja} and \cref{eq:var_sgn} that both of SGN and Oja's iteration admit a constant variance upper bound unless the stepsize $\alpha^{(k)}$ vanishes, 
	and therefore neither of them could achieve a linear rate. However, unlike the Oja's iteration, the bound of SGN involves an inverse top-eigenvalue matrix $\bar{\Lambda}$ as the iterate approaches the optimum,
	which we could take as an
	advantage in the adaptation to different datasets.
\end{remark}

%

The followings are two technical lemmas that are necessary for the proofs of \cref{th:odeconvergence} and \cref{th:rate}.
\begin{lemma}\label{lemma:kT1} 
	Under the assumptions [A1]-[A4],
	the process $X_{\alpha}(t)= X^{(\lfloor t\alpha^{-1}\rfloor)}$ generated by \cref{alg:sgn} with the stepsize $\alpha^{(k)}= \alpha$~$(k=0,1,\cdots)$ converges in distribution to the solution of \cref{eq:ode} as $\alpha\to 0$
	with $X(0)=X^{(0)}$.
\end{lemma}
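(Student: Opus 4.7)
The plan is to verify the hypotheses of Corollary 4.2 in Section 7.4 of Ethier and Kurtz, which provides a general criterion for the weak convergence of piecewise-constant discrete-time Markov processes to a continuous-time limit characterised by its infinitesimal mean and variance. Since the targeted limit \cref{eq:ode} is deterministic, the conclusion will follow once I show that (a) the infinitesimal mean $b(X)$ from \cref{eq:infinitemean} coincides with the vectorised right-hand side of \cref{eq:ode}, (b) the infinitesimal variance $a(X)$ from \cref{eq:infinitevariance} vanishes identically, and (c) the family $\{X_\alpha(\cdot)\}_\alpha$ satisfies the moment and regularity conditions required for the generator-convergence theorem to apply.

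For step (a), conditional on $X_\alpha(t)=X$, one has $\Delta X_\alpha(t)=\alpha S^{(k)}(X)$, where the randomness in $S^{(k)}(X)$ comes exclusively from the fresh data batch $A^{(k+1)}_h$ and is therefore independent of $X$. Because $S^{(k)}(X)$ is affine in $\Sigma^{(k)}_h$ via \cref{eq:direction}, and [A1] yields $\mathbb{E}[\Sigma^{(k)}_h]=\Sigma$, I obtain
\begin{align*}
\frac{1}{\alpha}\mathbb{E}[\kvec(\Delta X_\alpha(t))\mid X_\alpha(t)=X]
=\kvec\!\Bigl(\Sigma X(X^\top X)^{-1}-\tfrac{1}{2}X-\tfrac{1}{2}X(X^\top X)^{-1}X^\top\Sigma X(X^\top X)^{-1}\Bigr),
\end{align*}
which matches the drift of \cref{eq:ode} exactly. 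For step (b), the same identity $\Delta X_\alpha(t)=\alpha S^{(k)}(X)$ produces
\begin{align*}
\frac{1}{\alpha}\mathbb{E}\bigl[\kvec(\Delta X_\alpha(t))\kvec(\Delta X_\alpha(t))^\top\mid X_\alpha(t)=X\bigr]
=\alpha\,\mathbb{E}\bigl[\kvec(S^{(k)}(X))\kvec(S^{(k)}(X))^\top\mid X\bigr],
\end{align*}
which vanishes as $\alpha\to 0$ provided the second moment of $S^{(k)}(X)$ is finite. The sub-Gaussian hypothesis [A3] guarantees that $\Sigma^{(k)}_h$ has finite moments of every order, so the expression quadratic in $\Sigma^{(k)}_h$ appearing on the right has bounded expectation uniformly for $X$ ranging over any compact set on which $(X^\top X)^{-1}$ remains well-defined.

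Step (c) is where the main subtlety lies: the drift and diffusion coefficients both involve $(X^\top X)^{-1}$, which blows up when $X$ becomes rank-deficient, so the generator convergence must be controlled on a family of trajectories that remain uniformly bounded and full-rank. This is supplied by the companion results of the paper, namely the uniform moment bound of \cref{lemma:bound2} (which keeps $\mathbb{E}\|X^{(k)}\|^q$ finite uniformly in $k$ and $\alpha$ for all $q\geq 1$ via [A3]) and the full-rankness preservation of \cref{lemma:fullrank} (which prevents $\sigma_{\min}(X^{(k)})$ from collapsing). Together these produce tightness of $\{X_\alpha(\cdot)\}$ in $C([0,T];\mathbb{R}^{n\times p})$ and justify localising to a compact domain on which the drift field of \cref{eq:ode} is Lipschitz, thereby ensuring existence and uniqueness of the limiting ODE from the initial condition $X(0)=X^{(0)}$. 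Combining (a)--(c) with this uniqueness, Corollary 4.2 of Ethier and Kurtz delivers the asserted weak convergence of $X_\alpha(\cdot)$ to $X(\cdot)$. The main obstacle in executing this plan is exactly item (c): propagating the boundedness and full-rank properties through the non-linear Gauss--Newton iteration uniformly in the stepsize, which is the role played by the two auxiliary lemmas and hinges critically on exploiting sub-Gaussianity to control the tails of $\Sigma^{(k)}_h$.
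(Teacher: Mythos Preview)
Your proposal is correct and follows essentially the same route as the paper: compute the infinitesimal mean and variance \cref{eq:infinitemean}--\cref{eq:infinitevariance}, invoke the auxiliary full-rankness and boundedness results (\cref{lemma:fullrank}, \cref{lemma:bound2}) to control $(X^\top X)^{-1}$ and the second moment of the increment, and then apply Corollary~4.2 in Section~7.4 of Ethier--Kurtz. The only minor slip is your description of \cref{lemma:bound2} as bounding $\mathbb{E}\|X^{(k)}\|^q$ for all $q$; in fact that lemma bounds $\mathbb{E}\bigl[\|S^{(k)}(X^{(k)})\|_{\text{F}}^2\mid X^{(k)}=X\bigr]$ (and, along the way, $\mathbb{E}\tr(X^{(k+1)\top}X^{(k+1)})$), which is precisely what step~(b) requires.
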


\begin{remark}
	Since the diffusion coefficient of $X(t)$ is $\sigma(X) = \mathcal{O}(\alpha^{\frac{1}{2}})$, we obtain an ODE \cref{eq:ode} for the global dynamics of SGN updates omitting the stochasticity. 
	However, the drift term of \cref{eq:ode} vanishes and its diffusion term dominates instead
	when the iterate gets close to some stationary point $\tilde{X}\in\mathbb{R}^{n\times p}$ of \cref{eq:p2E}. For purposes of capturing local dynamics near $\tilde{X}$, we introduce a new Markov process via rescaling the original SGN updates by a factor of $\alpha^{-\frac{1}{2}}$ and get a new SDE in \cref{lemma:kT2}.
\end{remark}

It is known from the first order optimality condition of \cref{eq:p2E}
\begin{equation}\label{eq:firstorder}
\Sigma X = XX^\top X
\end{equation}
that a rank-$p$ stationary point $\tilde{X}$ spans an invariant subspace of $\Sigma$, which could also be spanned by $p$ eigenvectors of $\Sigma$.
Based on this, we equip each $\tilde{X}$ with an eigen-index set $\mathcal{I}_{p}=\{i_{1},\cdots, i_{p}\}\subset \{1, \cdots, n\}$, which indicates that $\tilde{X}$ has $p$ singular values $\lambda_{i}^{\frac{1}{2}}~(i\in \mathcal{I}_{p})$.
Given that the multiplicity of $\lambda_{i}~(i\in \mathcal{I}_{p})$ may not be one, we further define an extended eigen-index set $\hat{\mathcal{I}}_p$ that contains all indexes sharing the same eigenvalue as each one in the original eigen-index set $\mathcal{I}_{p}$, i.e., satisfying
(i)~$\mathcal{I}_{p}\subseteq\hat{\mathcal{I}}_p$;
(ii)~$\lambda_{i} = \lambda_{j}$ for any $i\in\hat{\mathcal{I}}_{p}$ and some $j\in \mathcal{I}_{p}$;
(iii)~$\lambda_{i}\neq \lambda_{j}$ for any $i\in\hat{\mathcal{I}}_{p},~j\notin\hat{\mathcal{I}}_{p}$.

Then, we have the following result.
\begin{lemma}\label{lemma:kT2}
	Suppose that the assumptions [A1]-[A4] hold and the initial point is around some rank-$p$ stationary point of \cref{eq:p2E}, which is associated with an eigen-index set~$\mathcal{I}_{p}=\{i_{1}, \cdots, i_{p}\}$
	and an orthogonal matrix~$V\in\mathbb{R}^{p\times p}$. We define a new process
	\begin{equation}\label{eq:z}
	Y_\alpha(t)=\alpha^{-\frac{1}{2}}X_\alpha(t)V= \alpha^{-\frac{1}{2}}X^{(\lfloor t\alpha^{-1}\rfloor)}V,
	\end{equation}
	generated by \cref{alg:sgn} with the stepisze $\alpha^{(k)}= \alpha~(k=0,1,\cdots)$.
	If $Y_{\alpha}(0)$ converges in distribution to some constant $Y^{0}\in\mathbb{R}^{n\times p}$ as $\alpha\to 0$ and the index $l$ is not in the extended eigen-index set $\hat{\mathcal{I}}_{p}$, then the $(l, r)$-th entry of $Y_{\alpha}(t)$ converges in distribution to the following SDE with $Y_{(l, r)}(0) = Y^{0}_{(l, r)}$
	\begin{equation}\label{eq:ksde}
	dY_{(l, r)}=\left(\Sigma Y\Lambda_{p}^{-1}-Y\right)_{(l, r)}dt+h^{-\frac{1}{2}}\lambda_{l}^{\frac{1}{2}}dB(t),\quad \quad 1\leq r\leq p,
	\end{equation}
	where $\Lambda_{p}=\Diag(\lambda_{i_{1}}, \lambda_{i_{2}}, \cdots, \lambda_{i_{p}})\in\mathbb{R}^{p\times p}$ and $B(t)$ is the Brownian motion.
\end{lemma}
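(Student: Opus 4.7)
The plan is to apply the same diffusion-approximation machinery (Corollary~4.2 of Section~7.4 in \cite{ethier1986markov}) that was used for \cref{lemma:kT1}, but now to the rescaled Markov process $Y_{\alpha}$ defined in \cref{eq:z}. Since $\Delta Y_{\alpha}(t) = \alpha^{1/2}\,S^{(k)}(X^{(k)})V$, the infinitesimal mean and variance become
\[
\alpha^{-1}\mathbb{E}[\Delta Y_{\alpha}\mid Y_{\alpha}=Y] = \alpha^{-1/2} S_{\text{E}}(X)V,\qquad \alpha^{-1}\text{Var}[\Delta Y_{\alpha}\mid Y_{\alpha}=Y] = \text{Var}[S^{(k)}(X)V\mid X],
\]
evaluated on $X$ with rows $X_{(l,:)} = \alpha^{1/2} Y_{(l,:)} V^{\top}$ for $l\notin\hat{\mathcal{I}}_{p}$ and $X_{(l,:)} = \tilde{X}_{(l,:)} + O(\alpha^{1/2})$ otherwise, where $\tilde{X}$ is the rank-$p$ stationary point. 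The task then reduces to matching these two quantities against the drift $(\Sigma Y\Lambda_{p}^{-1}-Y)_{(l,r)}$ and diffusion $h^{-1/2}\lambda_{l}^{1/2}$ of \cref{eq:ksde} in the $\alpha\to 0$ limit, and to verifying a fourth-moment Lindeberg-type bound.

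For the drift I would choose the orthogonal factor $V$ so that $\tilde{X}V = U_{\mathcal{I}_{p}}\Lambda_{p}^{1/2}$, where $U_{\mathcal{I}_{p}}$ stacks the eigenvectors indexed by $\mathcal{I}_{p}$ (in the diagonal basis, a partial identity up to a rotation absorbing eigenvalue multiplicities within $\hat{\mathcal{I}}_{p}$). Then $X^{\top}X \to V\Lambda_{p}V^{\top}$, $(X^{\top}X)^{-1}V \to V\Lambda_{p}^{-1}$, and $X(X^{\top}X)^{-1}V \to U_{\mathcal{I}_{p}}\Lambda_{p}^{-1/2}$, which is $O(1)$ on rows $l\in\hat{\mathcal{I}}_{p}$ and $O(\alpha^{1/2})$ elsewhere. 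A row-wise expansion in powers of $\alpha^{1/2}$ of the three terms in \cref{eq:direction} (using $(\Sigma X)_{(l,:)}=\lambda_{l}X_{(l,:)}$ and $\tilde{X}^{\top}\Sigma\tilde{X}=V\Lambda_{p}^{2}V^{\top}$) contributes at row $l\notin\hat{\mathcal{I}}_{p}$ the leading terms $\lambda_{l}Y_{(l,:)}\Lambda_{p}^{-1}$, $-\tfrac12 Y_{(l,:)}$, and $-\tfrac12 Y_{(l,:)}$, summing to $(\Sigma Y\Lambda_{p}^{-1} - Y)_{(l,:)}$ after the $\alpha^{-1/2}$ prefactor cancels the $\alpha^{1/2}$ from each term.

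For the variance, write $\Sigma_{c}^{(k)} := \Sigma_{h}^{(k)}-\Sigma$. Among the three pieces of $S^{(k)}(X)V - S_{\text{E}}(X)V$, only $\Sigma_{c}^{(k)}X(X^{\top}X)^{-1}V$ produces an $O(1)$ noise on row $l\notin\hat{\mathcal{I}}_{p}$, because $X(X^{\top}X)^{-1}V$ is asymptotically supported on rows in $\hat{\mathcal{I}}_{p}$; at leading order its $(l,r)$-entry reduces to $(\Sigma_{c}^{(k)})_{(l,i_{r})}\lambda_{i_{r}}^{-1/2}$. The batch-$h$ second-moment computation together with the sub-Gaussian moment bounds from [A3] yields $\mathrm{Var}[(\Sigma_{c}^{(k)})_{(l,i_{r})}] = \lambda_{l}\lambda_{i_{r}}/h$ (up to fourth-moment corrections), giving infinitesimal variance $\lambda_{l}/h$ and diffusion coefficient $h^{-1/2}\lambda_{l}^{1/2}$. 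Cross-covariances between distinct row indices $l \notin \hat{\mathcal{I}}_{p}$ vanish in the limit, producing the diagonal (hence independent Brownian) noise. Fourth-moment bounds on $\Delta Y_{\alpha}$ inherited from \cref{lemma:bound2} and \cref{lemma:fullrank} then verify the Lindeberg condition, and Corollary~4.2 of \cite{ethier1986markov} closes the argument.

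The main obstacle is the variance step, specifically the careful accounting of orders of $\alpha^{1/2}$: pinning down which rows of $X(X^{\top}X)^{-1}V$ remain $O(1)$ under the mixed-scale rescaling so that (i) the three nonlinear pieces of the GN direction combine into the clean linear drift $\Sigma Y\Lambda_{p}^{-1}-Y$ and (ii) the noise decomposes into asymptotically independent increments driven only by $(\Sigma_{c}^{(k)})_{(l,i_{r})}$. A secondary technical point is the consistent handling of eigenvalue multiplicities within $\hat{\mathcal{I}}_{p}$ through the orthogonal factor $V$, which is why the lemma restricts the SDE to indices $l\notin\hat{\mathcal{I}}_{p}$ where no such degeneracy interferes.
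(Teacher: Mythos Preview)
Your proposal is correct and follows essentially the same route as the paper: both compute the infinitesimal mean and variance of the rescaled process around the stationary point (the paper via the explicit parametrization $X_\alpha=E_q\Lambda_q^{1/2}WV^\top+\mathcal{O}(\alpha^{1/2})$ and a block computation of $\mathbb{E}[S_{(:,l)}S_{(:,r)}^\top]$, you via the entrywise $\Sigma_c^{(k)}$-decomposition) and then invoke Corollary~4.2 of \cite{ethier1986markov}. Your treatment of multiplicities through the orthogonal factor $V$ mirrors the paper's weight matrix $W$, and your row-wise drift expansion reproduces exactly the paper's infinitesimal mean $\kvec(\Sigma Y\Lambda_p^{-1}-Y)$.
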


\noindent The proofs of the above two lemmas are provided in the appendix. 

We are now ready to prove \cref{th:odeconvergence} and \cref{th:rate} as follows.

\begin{proof}[Proof of \cref{th:odeconvergence}]
	We first decompose the right-side of \cref{eq:ode} into two orthogonal components $P_{1}(X), P_{2}(X)$ as
	\begin{equation*}	
	\frac{dX}{dt} = P_{1}(X)+P_{2}(X),\quad P_{1}(X)=\Sigma P-XP^\top\Sigma P,\quad
	P_{2}(X)=\frac{1}{2}XP^\top\Sigma P-\frac{1}{2}X,
	\end{equation*}
	where $P=X(X^\top X)^{-1}$. Thus, the gradient of $f_{\text{E}}(X)$ could be expressed by
	\begin{align*}
	\nabla f_{\text{E}}(X)=2(XX^\top X-\Sigma X)=-2(P_{1}+2P_{2})(X^\top X).
	\end{align*}
	We then obtain a new ODE
	\begin{align*}
	\frac{df_{\text{E}}}{dt}=\tr\left((\nabla f_{\text{E}}(X))^\top\frac{dX}{dt}\right)&=-2\tr\left[(X^\top X)(P_{1}+2P_{2})^\top(P_{1}+P_{2})\right]\\
	&=-2\left(\Vert  P_{1}X^\top\Vert_{\text{F}}^{2}+2\Vert  P_{2}X^\top\Vert_{\text{F}}^{2}\right)\leq 0,
	\end{align*}
	which demonstrates the non-increasing property of the residual function $f_{\text{E}}(X)$. When both $\Vert P_{1}X^\top\Vert_{\text{F}}^{2}=0$ and $\Vert  P_{2}X^\top\Vert_{\text{F}}^{2}=0$ hold, we obtain 
	\cref{eq:firstorder},
	which indicates that the asymptotic solution of \cref{eq:ode} is the stationary point of \cref{eq:p2E}. Finally, applying \cref{lemma:kT1} immediately completes the proof of this theorem.
\end{proof}

\begin{remark}
	If $p'=p$ and $\bar{X}(t)$ is invertible, we could further conclude that the solution $X(t)$ to \cref{eq:ode} converges to the global optimum of \cref{eq:p2E}, that is
	\begin{equation*}
	\lim_{t\to\infty}X(t)=\left[\bar{\Lambda}^{\frac{1}{2}}\quad 0_{p\times(n-p)}\right]^\top V^\top,
	\quad\lim_{t\to\infty}\bar{X}(t)=\bar{\Lambda}^{\frac{1}{2}}V^\top,
	\end{equation*}
	where $V\in\mathbb{R}^{p\times p}$ is an orthogonal matrix. This indicates that for any $\varepsilon>0$, there exists a constant $T>0$ such that for all $t>T$
	\begin{equation}
	\label{eq:limit}
	\left\Vert X(t)-\left[\bar{\Lambda}^{\frac{1}{2}}\quad 0_{p\times(n-p)}\right]^\top V^\top\right\Vert_{\text{F}}<\varepsilon,\quad 
	\Vert \bar{X}(t)-\bar{\Lambda}^{\frac{1}{2}}V^\top\Vert_{\text{F}}<\varepsilon.
	\end{equation}
	Let $r_{i} = e_{i}^\top X(\bar{X}^\top \bar{X})^{-1}X^\top e_{i},~(p<i\leq n)$, we can obtain from  \cref{eq:ode} that
	\begin{equation*}
	\begin{aligned}
	\frac{dr_i}{dt}&=2e_{i}^\top\Sigma X(X^\top X)^{-1}(\bar{X}^\top\bar{X})^{-1}X^\top e_{i}-2e_{i}^\top X\bar{X}^{-1}\bar{\Lambda}\bar{X}(X^\top X)^{-1}(\bar{X}^\top\bar{X})^{-1}X^\top e_{i}\\
	&\leq\left\{
	\begin{array}{rcl}
	&2(\lambda_{i}-\lambda_{p})\lambda_{1}^{-1}r_{i}, & {\quad p=1,}\\
	&2(\lambda_{i}-\lambda_{p})\lambda_{1}^{-1}r_{i}+\mathcal{O}(\varepsilon),& {\quad p>1,~t>T.}
	\end{array} \right.\\
	\end{aligned}
	\end{equation*}
	The inequality above implies that the $i$-th row of $X~(p<i\leq n)$ declines to zero with a speed which is at least exponential when $p=1$, or when  $t>T$.
\end{remark}

Recall from \cref{eq:error2} that the error is measured by $p-\tr\left(\bar{X}^\top\bar{X}^\top(X^\top X)^{-1}\right)\in[0, ~p]$ for $X=X^{(k)}$, we define a zero-error optimal set for the top-$p$ OPCA problem as
$$\mathcal{X}^*=\left\{X\in\mathbb{R}^{n\times p}~\text{of full rank satisfying}~\cref{eq:firstorder}~\text{and}~ \mathbb{E}\tr\left(\bar{X}^\top\bar{X}^\top(X^\top X)^{-1}\right)=p\right\},$$ which includes the global minimum of \cref{eq:p2E}. As could be seen from \cref{fig:threephase}, we divide the whole trajectory of SGN iterates into three phases according to the expectational error. Based on this, we could then prove the convergence rate of SGN.

	\begin{figure}[H]
	\centering
	\begin{tikzpicture}[->,>=stealth',shorten >=1pt,auto,node distance=2.8cm,
	semithick]
	\tikzstyle{rectangle2}=[fill=none,draw=none,text=black]
	\tikzstyle{arrow} = [thick,->,>=stealth, draw=black, fill=black]
	\node[rectangle2]     (x1) at (0, 0){$\mathbb{E}\tr\left(\bar{X}^\top\bar{X}^\top(X^\top X)^{-1}\right)=0$};
	\node[rectangle2]     (x2) at (4.1, 0){$p\delta$};
	\node[rectangle2]     (x3) at (6.6, 0){$p(1-\delta)$};
	\node[rectangle2]     (x4) at (9.5, 0)[align=center]{certain\\accuracy};
	\node[rectangle2]     (ph1) at (3, 0.3){\emph{Phase 1}};
	\node[rectangle2]     (ph2) at (5.1, 0.3){\emph{Phase 2}};
	\node[rectangle2]     (ph3) at (8, 0.3){\emph{Phase 3}};
	\draw[arrow](x1) -- (x2);
	\draw[arrow](x2) -- (x3);
	\draw[arrow](x3) -- (x4);
	\end{tikzpicture}
	\label{fig:threephase}
	\caption{Three-phase analysis~($X = X^{(k)},~\delta\in(0, 1/2)$). }
\end{figure}

\begin{proof}[Proof of \cref{th:rate}]
	The proof consists of four parts. The first three are for analyzing the behaviours of SGN in different phases as being shown in \cref{fig:threephase} using the ODE/SDE approximations given in \cref{lemma:kT1} and \cref{lemma:kT2}. The last part is dedicated to bringing the first three parts together to obtain the total iteration required for the convergence to a neighbourhood of the optimal set with certain order of accuracy.
	
	\noindent \emph{[Phase 1]}.\quad We investigate into the worst case where the initial point is at or around the lowest saddle point with $ \mathcal{I}_{p}=\{i_{1}, \cdots, i_{p}\}\subset \{p'+1, \cdots, n\}$.
	In this case, \cref{lemma:kT2} suggests that the $(l, r)$-th element of $\bar{Y}$ satisfies the following SDE
	\begin{align}
	\label{ph1}
	dy_{lr}(t)=\left(\frac{\lambda_{l}}{\lambda_{i_{r}}}-1\right)y_{lr}dt+h^{-\frac{1}{2}}\lambda_{l}^{\frac{1}{2}}dB(t),\quad\quad\quad 1\leq l\leq p',\quad 1\leq r\leq p,
	\end{align}
	which is an Ornstein-Uhlenbeck process \cite{uhlenbeck1930theory} with formal solution
	\begin{align*}
	y_{lr}(t)&=y_{lr}(0)\exp\left\{\left(\frac{\lambda_{l}}{\lambda_{i_{r}}}-1\right)t\right\}+h^{-\frac{1}{2}}\lambda_{l}^{\frac{1}{2}}\int_{0}^{t}\exp\left\{\left(\frac{\lambda_{l}}{\lambda_{i_{r}}}-1\right)(t-s)\right\}dB(s)\\
	&=h^{-\frac{1}{2}}\lambda_{l}^{\frac{1}{2}}\int_{0}^{t}\exp\left\{\left(\frac{\lambda_{l}}{\lambda_{i_{r}}}-1\right)(t-s)\right\}dB(s).
	\end{align*}
	Calculating the expectational square and placing the timescale $t$ back to $k\alpha$, we obtain
	\begin{equation*}
	\mathbb{E}y_{lr}^2(t)\!=\! \frac{\lambda_{l}}{h}\!\int_{0}^{t}\!\!\exp\left\{2\left(\frac{\lambda_{l}}{\lambda_{i_{r}}}\!-\!1\right)(t\!-\!s)\right\}ds\!=\!\frac{ \lambda_{l}\lambda_{i_{r}}}{2h(\lambda_{i_{r}}\!-\!\lambda_{l})}
	\left(1\!-\!
	\exp\!\left\{
	\frac{2(\lambda_{l}\!-\!\lambda_{i_{r}})}{\lambda_{i_{r}}}k\alpha\right\}
	\right).
	\end{equation*}
	Recall that $Y_{\alpha} (t)= \alpha^{-\frac{1}{2}}X_{\alpha}(t)V$, we have for $1\leq l\leq p'$
	\begin{align*}
	\mathbb{E}e_{l}^\top XX^\top e_{l}=\alpha\mathbb{E}e_{l}^\top YY^\top e_{l}
	&=\sum_{r=1}^{p}
	\frac{\alpha \lambda_{l}\lambda_{i_{r}}}{2h(\lambda_{l}-\lambda_{i_{r}})}\left(\exp\left\{\frac{2(\lambda_{l}-\lambda_{i_{r}})}{\lambda_{i_{r}}}k\alpha\right\}-1\right)\\
	&\geq\frac{\alpha }{2h}\frac{\lambda_{1}\lambda_{n}}{\lambda_{1}-\lambda_{n}}\sum_{r=1}^{p}
	\left(\exp\left\{\frac{2(\lambda_{l}-\lambda_{i_{r}})}{\lambda_{i_{r}}}k\alpha\right\}-1\right)\\
	&>\frac{\alpha p}{2h}\frac{\lambda_{1}\lambda_{n}}{\lambda_{1}-\lambda_{n}}
	\left(\exp\left\{\frac{2\nu}{\lambda_{p}}k\alpha\right\}-1\right),
	\end{align*} 
	\noindent The boundedness of the solution $X(t)$ to ODE \cref{eq:ode}, which is given in \cref{lemma:bound}, indicates that $\sigma^{2}_{\max}\left(X\right)<\infty$. Then, we have
	\begin{equation}
	\label{eq:ph1goal}\mathbb{E}e_{l}^\top X(X^\top X)^{-1}X^\top e_{l}
	>\frac{1}{\sigma^{2}_{\max}(X)}\frac{\alpha p}{2h}\frac{\lambda_{1}\lambda_{n}}{\lambda_{1}-\lambda_{n}}
	\left(\exp\left\{\frac{2\nu}{\lambda_{p}}k\alpha\right\}-1\right).
	\end{equation}
	To arrive at the target of this stage, we take the right-hand side of \cref{eq:ph1goal} to be $\delta $ and obtain
	\begin{align}	\label{eq:ph1goal2}
	\frac{\alpha p}{2h}\frac{\lambda_{1}\lambda_{n}}{\lambda_{1}-\lambda_{n}}
	\left(\exp\left\{\frac{2\nu}{\lambda_{p}}k\alpha\right\}-1\right)=\delta \sigma^{2}_{\max}(X),
	\end{align}
	which thus yields the estimated passage time of the first phase:
	\begin{align}
	\label{eq:N1}
	k_{\alpha, 1}\asymp \frac{1}{2}\alpha^{-1}\lambda_{p}\nu^{-1}\log(\alpha^{-1}).
	\end{align}
	Note from \cref{eq:ph1goal} and \cref{eq:ph1goal2} that in this phase, we demand additionally that the square length of each row of $\bar{X}^\top(X^\top X)^{-\frac{1}{2}}$ should move from zero towards $\delta$ so that $\bar{X}^{(k_{\alpha, 1})}$	is of full rank. Thus, in the next phase, the iterate would not converge to some stationary point out of $\mathcal{X}^*$.

	\noindent \emph{[Phase 2]}.\quad Let~$R=\tr(X^\top\Sigma X(X^\top X)^{-1})$. By \cref{lemma:kT1}, we attain
	\begin{align*}
	\frac{dR}{dt}=&\tr\left[\left(\frac{dX^\top}{dt}\Sigma+X^\top\Sigma\frac{dX}{dt}\right)(X^\top X)^{-1}-X^\top\Sigma X(X^\top X)^{-1}\frac{d X^\top X}{dt}(X^\top X)^{-1}\right]\\
	=&2\tr\left[(X^\top X)^{-2}X^\top \Sigma^2 X-(X^\top X)^{-2}X^\top \Sigma X(X^\top X)^{-1} X^\top \Sigma X\right]\\
	=&\frac{1}{2}\left\Vert (I_{n}-X(X^\top X)^{-1}X^\top)\nabla f_{\text{E}}(X)(X^\top X)^{-1}\right\Vert_{\text{F}}^2\geq 0,
	\end{align*}
	which implies that $R(t)$ will keep increasing until $\nabla f_{\text{E}}(X) = 0$. Then, we have
	\begin{align*}
	R(X^*)-R(X)&=\sum_{i=1}^{p}\lambda_{i}-\sum_{j=1}^{n}\lambda_{j}\tr\left((X^\top X)^{-1}X_{(j, :)}^\top X_{(j, :)}\right)\\
	&=\sum_{i=1}^{p}\lambda_{i}\left(1-X_{(i, :)}(X^\top X)^{-1}X_{(i, :)}^\top \right)-\sum_{j=p+1}^{n}\lambda_{j}X_{(j, :)}(X^\top X)^{-1}X_{(j, :)}^\top\\
	&\geq\lambda_{p}\sum_{i=1}^{p}\left(1-X_{(i, :)}(X^\top X)^{-1}X_{(i, :)}^\top \right)-\sum_{j=p+1}^{n}\lambda_{j}X_{(j, :)}(X^\top X)^{-1}X_{(j, :)}^\top\\
	&=\lambda_{p}\left(p-\sum_{i=1}^{p'}X_{(i, :)}(X^\top X)^{-1}X_{(i, :)}^\top \right)-\tau\sum_{j=p'+1}^{n}X_{(j, :)}(X^\top X)^{-1}X_{(j, :)}^\top\\
	&=(\lambda_{p}-\tau)\left(p-\tr(\bar{X}^\top\bar{X}(X^\top X)^{-1}\right)>0,
	\end{align*}
	where $X^*\in\mathcal{X}^*$ and $\lambda_{n}\leq \tau< \lambda_{p}$ satisfying $$\sum_{j=p'+1}^{n}\lambda_{j}X_{(j, :)}(X^\top X)^{-1}X_{(j, :)}^\top=\tau\sum_{j=p'+1}^{n}X_{(j, :)}(X^\top X)^{-1}X_{(j, :)}^\top.$$
	That is to say, we obtain a lower bound of $\tr\left(\bar{X}^\top\bar{X}^\top(X^\top X)^{-1}\right)$ using $R(X)$.
	Then, by setting $R(X^*)-R(X)\leq (\lambda_{p}-\tau) p\delta$, $t=k\alpha$, we have
	\begin{equation}\label{eq:N2}
	k_{\alpha, 2}\asymp \alpha^{-1}R^{-1}\left(\sum_{i=1}^{p}\lambda_{i}-(\lambda_{p}-\tau) p\delta\right).
	\end{equation}

	\noindent \emph{[Phase 3]}.\quad
	This stage shows oscillations around some $X^*\in\mathcal{X}^*$, in which case $X_{\alpha}(t)$ could be written as 
	\begin{equation}
	\label{eq:y}
	X_{\alpha}(t)=E_{p'}\Lambda_{p'}^{\frac{1}{2}}WV^\top+\mathcal{O}(\alpha^{\frac{1}{2}}),
	\end{equation}
	where $E_{p'}=[e_{1}, e_{2}, \cdots, e_{p'}]\in\mathbb{R}^{n\times p'},  \Lambda_{p'}=\Diag(\lambda_{1}, \lambda_{2}, \cdots, \lambda_{p'})\in\mathbb{R}^{p'\times p'}$,
	\begin{equation*} 
	W=
	\left[         
	\begin{array}{cc}   
	I_{p-1}&0_{(p-1)\times 1}\\
	0_{(p'-p+1)\times(p-1)}&w
	\end{array}
	\right]\in\mathbb{R}^{p'\times p},\quad w^\top w=1,\quad w\in\mathbb{R}^{p'-p+1}.
	\end{equation*}
	In this situation, \cref{lemma:kT2} indicates that the $(l, r)$-th entry of $\wideubar{Y}$ takes the form
	\begin{align*}
	y_{lr}(t)=y_{lr}(0)\exp\left\{\frac{(\lambda_{l}-\lambda_{r})}{\lambda_{r}}t\right\}+h^{-\frac{1}{2}}\lambda_{l}^\frac{1}{2}\int_{0}^{t}\exp\left\{\frac{(\lambda_{l}-\lambda_{r})}{\lambda_{r}}(t-s)\right\}dB(s).
	\end{align*}
	Then we have the expectational square
	\begin{align*}
	\mathbb{E}y_{lr}^{2}(t)&=y_{lr}^{2}(0)\exp\left\{\frac{2(\lambda_{l}-\lambda_{r})}{\lambda_{r}}t\right\}+h^{-1}\lambda_{l}\int_{0}^{t}\exp\left\{\frac{2(\lambda_{l}-\lambda_{r})}{\lambda_{r}}(t-s)\right\}ds\\
	&=\exp\left\{\frac{2(\lambda_{l}-\lambda_{r})}{\lambda_{r}}t\right\}\left(y^{2}_{lr}(0)-\frac{\lambda_{l}\lambda_{r}}{2h(\lambda_{r}-\lambda_{l})}\right)+\frac{\lambda_{l}\lambda_{r}}{2h(\lambda_{r}-\lambda_{l})}.
	\end{align*}
	According to the matrix perturbation theory \cite{stewart1990matrix}, we have
	\begin{align*}
	(X^\top X)^{-1} = \left(V\Lambda_{p'}^{\frac{1}{2}}W^\top E_{p'}^\top E_{p'} \Lambda_{p'}^{\frac{1}{2}} WV^\top+\mathcal{O}\left(\alpha^\frac{1}{2}\right)\right)^{-1}=V\Lambda_{p}^{-1}V^\top+\mathcal{O}\left(\alpha^{\frac{1}{2}}\right).
	\end{align*}
	Then, using the relation $Y_{\alpha}(t) = \alpha^{-
		\frac{1}{2}}X_{\alpha}(t)V$, $t = k\alpha$ and the formula \cref{eq:y}, we get
	\begin{align*}
	&~\mathbb{E}\tr\left(\wideubar{X}^\top\wideubar{X}(X^\top X)^{-1}\right)=\alpha\mathbb{E}\tr\left(V\wideubar{Y}^\top\wideubar{Y}V^\top V\Lambda_{p}^{-1}V^\top\right)+\mathcal{O}\left(\alpha^{\frac{3}{2}}\right)\\
	=&\sum_{l=p'+1}^{n}\sum_{r=1}^{p}\exp\left\{\frac{2(\lambda_{l}-\lambda_{r})}{\lambda_{r}}k\alpha\right\} \frac{\alpha y^{2}_{lr}(0)}{\lambda_{r}}+C\sum_{l=p'+1}^{n}\sum_{r=1}^{p}\frac{\lambda_{l}}{2h(\lambda_{r}-\lambda_{l})}\alpha+\mathcal{O}\left(\alpha^{\frac{3}{2}}\right)\\
	\lesssim&~ p\delta\cdot\exp\left\{-\frac{2\nu}{\lambda_{p}}k\alpha\right\}+C\sum_{l=p'+1}^{n}\sum_{r=1}^{p}\frac{\lambda_{l}}{2h(\lambda_{r}-\lambda_{l})}\alpha+\mathcal{O}\left(\alpha^{\frac{3}{2}}\right),
	\end{align*}
	where $C$ is some positive constant.
	Then, we set
	$
	p\delta\cdot\exp\left\{-2\nu\lambda_{p}^{-1}k\alpha\right\}=o(\alpha),
	$
	which thus gives the traversing time of the last phase
	\begin{align}
	\label{eq:N3}k_{\alpha, 3}=\frac{1}{2}\log(p\alpha^{-1}\delta)\lambda_{p}\nu^{-1}\alpha^{-1}\asymp \frac{1}{2}\log(\alpha^{-1})\lambda_{p}\nu^{-1}\alpha^{-1}.
	\end{align}	
	
	\noindent\emph{[Combination of three phases]}.\quad
	Based on the Markov property of SGN update, summing up \cref{eq:N1}, \cref{eq:N2} and \cref{eq:N3}  gives an estimate on the global convergence time of SGN asymptotically:
	\begin{equation*}
	k_{\alpha}=	k_{\alpha, 1}+k_{\alpha, 2}+k_{\alpha, 3}\asymp\lambda_{p}\nu^{-1} \log(\alpha^{-1})\alpha^{-1}.
	\end{equation*}
	The expectational estimation error of the approximation $X^{(k_\alpha)}$ is
	\begin{equation}
	\label{eq:error}\mathbb{E}\Vert\sin\Theta( X^{(k_{\alpha})}, U_{p'})\Vert_{\text{\rm F}}^{2}\lesssim \frac{C}{h}\sum_{l=p'+1}^{n}\sum_{r=1}^{p}\frac{\lambda_{l}}{2(\lambda_{r}-\lambda_{l})}\alpha+o(\alpha).
	\end{equation}
	Since the sample size $m$ is known, the total number of iterations could be expressed as $K = \left\lceil m/h\right\rceil$. 	
	Let		
	\begin{align}
	\label{eq:alpha} \widetilde{\alpha}(K)=\frac{\lambda_{p}\log K}{\nu K},
	\end{align}
	we have
	\begin{align*}
	k_{\widetilde{\alpha}}\asymp\left(\lambda_{p}\nu^{-1}\widetilde{\alpha}^{-1}\log\widetilde{\alpha}^{-1}\right)\asymp K.
	\end{align*}
	Substituting \cref{eq:alpha} into \cref{eq:error} yields the bound for SGN.
\end{proof}

\subsection{Main results on diminishing stepsizes}\label{sec:theory-dimi}
The forementioned diffusion approximation theory can also be applied to the diminishing stepsize case
\begin{equation}
\label{eq:dimi}
\alpha^{(k)} = \frac{\gamma}{c_1(k+c_2)^{\beta}},\quad 0<\beta<1,\quad c_1,~c_2>0,
\end{equation}
with infinitesimal $\gamma$.
The corresponding differential equation approximations could be obtained by making a slight modification on the construction of the continuous-time extension of SGN iterates $\{X^{(k)}\}$ using the stepsize \cref{eq:dimi}, that is,
$$X_{\gamma}(t)= X^{(\lfloor t\gamma^{\frac{1}{\beta-1}}\rfloor)}.$$
In a way similar to the constant stepsize case, we compute the infinitesimal characteristics \cref{eq:infinitemean},~\cref{eq:infinitevariance}, and have the results in \cref{lemma:kT1-dimi} and \cref{lemma:kT2-dimi}. There is no essential difference of the proofs of them from those of \cref{lemma:kT1} and \cref{lemma:kT2}, and thus are omitted here.

\begin{lemma}\label{lemma:kT1-dimi} 
	Under the assumptions [A1]-[A4],
	the Markov process $X_{\gamma}(t)= X^{(\lfloor t\gamma^{\frac{1}{\beta-1}}\rfloor)}$ generated by \cref{alg:sgn} with the stepsize \cref{eq:dimi} and  $c_2 = \gamma^{\frac{1}{\beta-1}}$
	converges in distribution to the solution of 	\begin{equation}\label{eq:ode-dimi}	\frac{dX}{dt}=\frac{1}{c_1(t+1)^\beta}\left(\Sigma X(X^\top X)^{-1}-\frac{1}{2}X-\frac{1}{2}X(X^\top X)^{-1}X^\top\Sigma X(X^\top X)^{-1}\right),
	\end{equation}
	as $\gamma\to 0$ with $X(0)=X^{(0)}$.
\end{lemma}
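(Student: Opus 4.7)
The plan is to parallel the proof of \cref{lemma:kT1}, the only new ingredient being that the limiting equation is non-autonomous because the stepsize shrinks with $k$. First I would introduce the natural time scale $\tau = \tau(\gamma) = \gamma^{1/(1-\beta)}$, which tends to zero with $\gamma$, so that under the choice $c_2 = \gamma^{1/(\beta-1)} = 1/\tau$ the interpolated process reads $X_\gamma(t) = X^{(\lfloor t/\tau \rfloor)}$ and the stepsize rewrites as
$$\alpha^{(k)} = \frac{\gamma}{c_1(k + c_2)^\beta} = \frac{\tau}{c_1(k\tau + 1)^\beta}.$$
Evaluating at $k = \lfloor t/\tau \rfloor$ gives $k\tau \to t$, so $\alpha^{(k)}/\tau \to 1/[c_1(t+1)^\beta]$, which is precisely the time-dependent prefactor on the right-hand side of \cref{eq:ode-dimi}. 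The reparametrization $c_2 = 1/\tau$ is the crucial point: it places $k = 0$ at $t = 0$ without a singularity at the origin ($\alpha^{(0)} = \tau/c_1$ is finite), and converts the algebraic decay of $\alpha^{(k)}$ into the factor $(t+1)^{-\beta}$.

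Next I would apply the same weak-convergence framework as in \cref{lemma:kT1}, namely Corollary 4.2 of Section 7.4 of \cite{ethier1986markov}, by computing the infinitesimal mean and variance
\begin{align*}
b(X, t) &= \lim_{\gamma \to 0} \frac{1}{\tau}\,\mathbb{E}\!\left[\kvec(\Delta X_\gamma(t))\,\big|\,X_\gamma(t) = X\right],\\
a(X, t) &= \lim_{\gamma \to 0} \frac{1}{\tau}\,\mathbb{E}\!\left[\kvec(\Delta X_\gamma(t))\kvec(\Delta X_\gamma(t))^\top\,\big|\,X_\gamma(t) = X\right].
\end{align*}
Using $\Delta X_\gamma(t) = \alpha^{(k)} S^{(k)}(X)$ together with the fact that $\mathbb{E}[S^{(k)}(X)] = S_{\mathrm{E}}(X)$ is precisely the bracketed deterministic expression in \cref{eq:ode-dimi}, the scaling identified above yields $b(X, t) = [c_1(t+1)^\beta]^{-1}\,S_{\mathrm{E}}(X)$. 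For $a$, a single increment has covariance of order $(\alpha^{(k)})^2 = \mathcal{O}(\tau^2)$ by the same sub-Gaussian second-moment bound on $S^{(k)} - S_{\mathrm{E}}$ established in the proof of \cref{lemma:kT1}, so $a(X,t) = \mathcal{O}(\tau) \to 0$, producing an ODE rather than a genuine SDE. The boundedness, full-rankness, and moment-control estimates needed to pass from pointwise convergence of infinitesimal generators to weak convergence of the processes already appear in the appendix supporting \cref{sec:theory-main} and are insensitive to the step schedule, since they only rely on assumption [A3] and on $\alpha^{(k)} \le 1$; on any fixed interval $[0,T]$ the ratio $\alpha^{(k)}/\tau$ is uniformly bounded by $1/c_1$, so tightness transfers verbatim.

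The main obstacle I foresee is verifying the hypotheses of the weak-convergence theorem in their non-autonomous form, which requires checking the joint continuity of $b(X, t)$ in $(X, t)$ on compact sets and controlling the limit coefficient $1/[c_1(t+1)^\beta]$ uniformly on $[0, T]$; neither difficulty is substantial, but together they constitute the genuinely new ingredient beyond \cref{lemma:kT1} and explain why the authors describe the argument as a slight modification rather than a direct transcription. Existence and uniqueness of the limit ODE then follow from the local Lipschitz continuity of $X \mapsto S_{\mathrm{E}}(X)$ on any region where $X^\top X$ remains uniformly positive definite, combined with the integrability of the time prefactor on $[0, T]$; the initial condition $X(0) = X^{(0)}$ is inherited from the deterministic choice $X_\gamma(0) = X^{(0)}$, which completes the identification of the limit process.
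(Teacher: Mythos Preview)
Your proposal is correct and matches the paper's intended approach: the paper explicitly omits the proof of \cref{lemma:kT1-dimi}, stating only that it requires ``a slight modification on the construction of the continuous-time extension'' of the argument for \cref{lemma:kT1}, and your time-rescaling $\tau=\gamma^{1/(1-\beta)}$ with the resulting identity $\alpha^{(k)}=\tau/[c_1(k\tau+1)^\beta]$ is precisely that modification. Your computation of the infinitesimal mean and variance, together with the observation that the supporting boundedness and full-rank lemmas from the appendix carry over unchanged because they depend only on $\alpha^{(k)}\le 1$, supplies more detail than the paper itself provides.
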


\begin{remark}
	If the parameter $c_2$ in \cref{eq:dimi} is set to a constant independent of $\gamma$, we would have the following limiting differential equation as $\alpha\to 0$
	\begin{equation}\label{eq:ode-dimi2}
	\frac{dX}{dt}=\frac{1}{c_1 t^\beta}\left(\Sigma X(X^\top X)^{-1}-\frac{1}{2}X-\frac{1}{2}X(X^\top X)^{-1}X^\top\Sigma X(X^\top X)^{-1}\right),
	\end{equation}
	which are almost the same as \cref{eq:ode-dimi} except for the multiplicative factor. In the subsequent discussions, we will make full use of \cref{eq:ode-dimi} because it avoids the singularity of \cref{eq:ode-dimi2} at $t = 0$.
\end{remark}

\begin{lemma}\label{lemma:kT2-dimi}
	Suppose that the assumptions [A1]-[A4] hold and the initial point is
	around some rank-$p$ stationary point of \cref{eq:p2E}, which is associated with an eigen-index set~$\mathcal{I}_{p}=\{i_{1}, \cdots, i_{p}\}$
	and an orthogonal matrix~$V\in\mathbb{R}^{p\times p}$.
	We define a new process
	\begin{equation}\label{eq:z-dimi}
	Y_\gamma(t)=\gamma^{-\frac{1}{2(1-\beta)}}X_\gamma(t)V,
	\end{equation}
	generated by \cref{alg:sgn} with the stepsize \cref{eq:dimi} and $c_2 = \gamma^{\frac{1}{\beta-1}}$.
	If $Y_{\gamma}(0)$ converges in distribution to some constant $Y^{0}\in\mathbb{R}^{n\times p}$ as $\gamma\to 0$ and the index $l$ is not in the extended eigen-index set $\hat{\mathcal{I}}_{p}$, then the $(l, r)$-th entry of $Y_{\gamma}(t)$ converges in distribution to the following SDE with $Y_{(l, r)}(0) = Y^{0}_{(l, r)}$
	\begin{equation}\label{eq:ksde-dimi}
	dY_{(l, r)}=\frac{1}{c_1(t+1)^\beta}\left(\Sigma Y\Lambda_{p}^{-1}-Y\right)_{(l, r)}dt+\frac{\lambda_{l}^{\frac{1}{2}}}{c_1h^{\frac{1}{2}}(t+1)^\beta}dB(t),\quad \quad 1\leq r\leq p,
	\end{equation}
	where $\Lambda_{p}=\Diag(\lambda_{i_{1}}, \lambda_{i_{2}}, \cdots, \lambda_{i_{p}})\in\mathbb{R}^{p\times p}$ and $B(t)$ is the Brownian motion.
\end{lemma}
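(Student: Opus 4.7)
The plan is to mirror the three-step template used for the proof of \cref{lemma:kT2}---construct a continuous-time interpolation of the SGN iterates, compute the infinitesimal mean and variance of the rescaled process, and invoke Corollary 4.2 in Section 7.4 of \cite{ethier1986markov}---after first reparameterising so that the diminishing stepsize reveals a natural infinitesimal parameter. The key observation is that with $c_2=\gamma^{1/(\beta-1)}$ a direct computation gives
\begin{equation*}
\alpha^{(k)}=\frac{\gamma}{c_1(k+\gamma^{1/(\beta-1)})^{\beta}}=\frac{\Delta t}{c_1(t+1)^{\beta}},\qquad \Delta t:=\gamma^{1/(1-\beta)},\qquad t=k\Delta t,
\end{equation*}
so that one SGN step reads $X^{(k+1)}-X^{(k)}=\frac{\Delta t}{c_1(t+1)^{\beta}}S^{(k)}(X^{(k)})$, and the rescaling $Y_{\gamma}=\gamma^{-1/[2(1-\beta)]}X_{\gamma}V=(\Delta t)^{-1/2}X_{\gamma}V$ matches the rescaling $Y_{\alpha}=\alpha^{-1/2}X_{\alpha}V$ of \cref{lemma:kT2}, with $\Delta t$ playing the role of $\alpha$ and an extra bounded, smooth, time-dependent prefactor $1/[c_1(t+1)^{\beta}]$ riding along.

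The second step is to plug this into \cref{eq:infinitemean}--\cref{eq:infinitevariance} and expand $S^{(k)}(X)$ around the stationary point $\tilde{X}$ associated to $\mathcal{I}_p$ exactly as in the proof of \cref{lemma:kT2}. For indices $l\notin\hat{\mathcal{I}}_p$ one has $\tilde{X}_{(l,:)}=0$, so $(X_\gamma V)_{(l,:)}=(\Delta t)^{1/2}Y_{\gamma,(l,:)}$, and the deterministic part of the expansion produces, at leading order, the drift $\frac{1}{c_1(t+1)^{\beta}}(\Sigma Y\Lambda_p^{-1}-Y)_{(l,r)}$, while the centred batch covariance $\Sigma_h^{(k)}-\Sigma$, whose variance scales like $1/h$ under assumption [A1], contributes an infinitesimal variance $\lambda_l/[c_1^{2}h(t+1)^{2\beta}]$ in each entry with vanishing cross-moments between distinct $(l,r)$. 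These are exactly the drift and diffusion coefficients of \cref{eq:ksde-dimi}; the only novelty compared with the constant-stepsize computation is the two factors of $1/[c_1(t+1)^{\beta}]$ carried along uniformly by the stepsize.

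For the third step I would verify the hypotheses of Corollary 4.2 in Section 7.4 of \cite{ethier1986markov}: local continuity of the limiting drift and diffusion coefficients, tightness of $\{Y_{\gamma}\}$ on compact intervals, and convergence of the initial distributions. On any compact time interval the prefactor $1/[c_1(t+1)^{\beta}]$ is Lipschitz and bounded, so the regularity of the SDE coefficients is inherited directly from the analysis behind \cref{lemma:kT2}; the moment and full-rank bounds needed to justify the $o(1)$ remainders follow from \cref{lemma:bound2} and \cref{lemma:fullrank}, applied with the (smaller) sequence of diminishing stepsizes. The convergence $Y_{\gamma}(0)\to Y^{0}$ is assumed, and \cref{eq:ksde-dimi} (linear drift in $Y$, bounded deterministic diffusion on compacts) admits a unique strong solution, so weak convergence of $Y_{\gamma,(l,r)}(t)$ to it follows.

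The main obstacle I expect is essentially bookkeeping: carrying the time-dependent prefactor $1/[c_1(t+1)^{\beta}]$ through the expansion of \cref{lemma:kT2} while keeping all error terms uniformly $o(1)$ on compact time intervals. A subtler point worth flagging is the role of the shift $c_2=\gamma^{1/(\beta-1)}$: without it one would arrive at \cref{eq:ode-dimi2} and an analogous singular SDE with a $t=0$ blow-up, which would require a separate treatment of the earliest iterations; this shift moves the singularity outside the interval of interest and makes the weak-convergence theorem applicable uniformly on compacts starting at $t=0$.
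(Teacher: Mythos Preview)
Your proposal is correct and follows precisely the route the paper indicates: the paper states that ``there is no essential difference of the proofs of them from those of \cref{lemma:kT1} and \cref{lemma:kT2}, and thus are omitted here,'' and your plan is exactly to redo the \cref{lemma:kT2} computation with the reparametrisation $\alpha^{(k)}=\Delta t/[c_1(t+1)^{\beta}]$, $\Delta t=\gamma^{1/(1-\beta)}$, carrying the bounded time-dependent prefactor through the infinitesimal mean and variance and then invoking Ethier--Kurtz. Your observation that the shift $c_2=\gamma^{1/(\beta-1)}$ is what makes the coefficients regular at $t=0$ is also on point and matches the paper's remark following \cref{lemma:kT1-dimi}.
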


The additional multiplicative factor $c_{1}^{-1}(t+1)^{-\beta}$ makes the limiting differential equations \cref{eq:ode-dimi} and \cref{eq:ksde-dimi} time-inhomogeneous, which calls for a more sophisticated analysis than in the constant stepsize case in order to obtain the convergence rate.

The following theorem states the convergence result for the diminishing stepsize case.

\begin{theorem}\label{th:rate-dimi}
	Suppose that the assumptions [A1]-[A4] hold.
	Given the total sample size $m>0$, and the stepsize \cref{eq:dimi} with
	\begin{equation*}
	c_1 = \frac{\nu}{\lambda_p},\quad c_2 = \gamma^{\frac{1}{\beta-1}},\quad\beta = 1-\frac{1}{\log K},\quad \gamma = \frac{(1-\beta)\log K}{K^{1-\beta}}
	,\quad K = \left\lceil\frac{m}{h}\right\rceil,
	\end{equation*}
	where $k=0, 1,\cdots, K$,
	the expectational error of the final output $X^{(K)}$ of \cref{alg:sgn} is given by
	\begin{equation}\label{eq:rate-dimi2}
	\mathbb{E}\Vert\sin\Theta(X^{(K)}, U_{p'})\Vert_{\text{\rm F}}^{2}\lesssim \frac{C}{2h}p(n-p')\frac{\lambda_{p}\lambda_{p'+1}}{\nu^2}K^{-1},
	\end{equation}
	where $C$ is some positive constant.
\end{theorem}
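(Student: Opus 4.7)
The plan is to parallel the three-phase template used in the proof of \cref{th:rate}, but replace \cref{lemma:kT1} and \cref{lemma:kT2} by their diminishing-stepsize counterparts \cref{lemma:kT1-dimi} and \cref{lemma:kT2-dimi}. The essential device is the deterministic time change
\[
\tau(t) := \int_0^t \frac{ds}{c_1(s+1)^\beta} = \frac{(t+1)^{1-\beta}-1}{c_1(1-\beta)},
\]
which eliminates the time-varying prefactor $1/[c_1(t+1)^\beta]$ appearing in the drift of both \cref{eq:ode-dimi} and \cref{eq:ksde-dimi}. Under this reparameterization, \cref{eq:ode-dimi} becomes exactly \cref{eq:ode}, so the deterministic monotonicity argument transfers verbatim; the SDE \cref{eq:ksde-dimi} retains a residual time-dependent diffusion coefficient but, as in the constant case, the integrating factor is governed by $e^{\kappa\tau(t)}$ with $\kappa=(\lambda_l-\lambda_{i_r})/\lambda_{i_r}$. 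Note also that, since $\sum_k \alpha^{(k)} \asymp \tau(K\gamma^{1/(1-\beta)})$, the virtual time $\tau$ is essentially the cumulative stepsize and therefore plays the role that $k\alpha$ played in the constant case.

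For Phase 1, starting at the worst saddle point, I would solve the linear SDE \cref{eq:ksde-dimi} explicitly. After the substitution $u=\tau(s)$ the stochastic integral takes the form
\[
\mathbb{E} y_{lr}^2(t) \asymp \frac{\lambda_l}{c_1 h}\int_0^{\tau(t)} e^{2\kappa(\tau(t)-u)} \bigl[1+c_1(1-\beta)u\bigr]^{-\beta/(1-\beta)}\, du,
\]
where the kernel $[1+c_1(1-\beta)u]^{-\beta/(1-\beta)}$ is well approximated by $e^{-c_1 u}$ when $\beta$ is close to $1$, yielding $\mathbb{E} y_{lr}^2(t) \asymp C\lambda_l\, e^{2\kappa\tau(t)}/[c_1 h(2\kappa+c_1)]$. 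Undoing the rescaling $X=\gamma^{1/[2(1-\beta)]} YV^\top$ turns the passage condition $\mathbb{E} e_l^\top X X^\top e_l \gtrsim \delta$ into $e^{2\kappa\tau_1} \gtrsim \delta\, \gamma^{-1/(1-\beta)}$, giving $\tau_1 \asymp \lambda_p \log(\gamma^{-1/(1-\beta)})/(2\nu)$. Phase 2 reuses the monotonicity argument for $R(X)=\tr(X^\top \Sigma X(X^\top X)^{-1})$ from the proof of \cref{th:rate} applied to \cref{eq:ode-dimi} in the $\tau$ variable and produces an analogous $\tau_2$. Phase 3, using \cref{eq:ksde-dimi} near a zero-error optimum with $\kappa<0$, yields the stationary variance
\[
\mathbb{E}\Vert\sin\Theta(X^{(K)}, U_{p'})\Vert_{\text{F}}^{2} \lesssim \gamma^{1/(1-\beta)}\,\frac{C}{h}\sum_{l=p'+1}^{n}\sum_{r=1}^{p} \frac{\lambda_l}{2(\lambda_r-\lambda_l)} + o\!\left(\gamma^{1/(1-\beta)}\right).
\]

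The final step combines the three phases with the parameter choice. Summing gives a total required virtual time $\tau_{\text{total}} \asymp \lambda_p\log(\gamma^{-1/(1-\beta)})/\nu$, which must be realized within the $K$ iterations available, where the end time in $t$ is $T=K\gamma^{1/(1-\beta)}$. Using the approximation $\tau(T) \asymp K^{1-\beta}\gamma/[c_1(1-\beta)]$ and setting this equal to $\tau_{\text{total}}$ yields exactly $\gamma = (1-\beta)\log K/K^{1-\beta}$ with $c_1=\nu/\lambda_p$. Substituting this into $\gamma^{1/(1-\beta)}$ and specializing to $\beta = 1-1/\log K$ gives the effective stepsize $\gamma^{1/(1-\beta)}=1/K$, and multiplying by the spectral sum in the Phase 3 bound produces the claimed rate \cref{eq:rate-dimi2}.

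The principal obstacle will be Phase 1: the stochastic integral
\[
\int_0^t e^{2\kappa(\tau(t)-\tau(s))}\,\frac{\lambda_l}{c_1^2 h(s+1)^{2\beta}}\, ds
\]
involves the $K$-dependent exponent $\beta/(1-\beta)$ inside the kernel after the time change, and one must show that its contribution behaves as $O(1)\cdot e^{2\kappa\tau(t)}$ uniformly in $K$, so that the log factor in $\tau_1$ is absorbed by the specific relation $\gamma = (1-\beta)\log K/K^{1-\beta}$ rather than surfacing in the final bound. The remaining steps---the monotonicity of $R$ in Phase 2 and the stable-manifold decomposition used in Phase 3---are structurally identical to the constant-stepsize case and require only that It\^o calculus be carried through the $(t+1)^{-\beta}$ factors.
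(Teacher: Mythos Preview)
Your proposal is correct and follows essentially the same three-phase template as the paper, invoking \cref{lemma:kT1-dimi} and \cref{lemma:kT2-dimi} in place of their constant-stepsize analogues and arriving at the same passage times and final rate. The one notable difference is in Phase~1: your time change $\tau(t)=\int_0^t[c_1(s+1)^\beta]^{-1}\,ds$ makes the integrating factor $\Phi(t)=e^{\kappa\tau(t)}$ explicit and you then approximate the residual kernel $[1+c_1(1-\beta)u]^{-\beta/(1-\beta)}\approx e^{-c_1 u}$, whereas the paper works directly with $\Phi(t)$ and handles the integral $\int_0^t\Phi^{-2}(s)(s+1)^{-2\beta}\,ds$ via the substitution $z=\tfrac{2(\lambda_l-\lambda_{i_r})}{c_1\lambda_{i_r}(1-\beta)}(s+1)^{1-\beta}$, reducing it to a difference of incomplete Gamma functions and obtaining an explicit lower bound without relying on $\beta\to 1$ asymptotics.
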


\begin{proof}
	We adopt the framework same as that in the proof of \cref{th:rate}, where the sequence of SGN iterates with the diminishing stepsize \cref{eq:dimi} is described by a three-phase process.
	
	\noindent \emph{[Phase 1]}.\quad 
	We take the initial condition $\mathbb{E}\tr(\bar{X}^\top\bar{X}(X^\top X)^{-1})=0$ and consider the worst case for which the initial point is at or around the lowest saddle point with $ \mathcal{I}_{p}=\{i_{1}, \cdots, i_{p}\}\subset \{p'+1, \cdots, n\}$.
	As \cref{lemma:kT2-dimi} suggests in this case, the $(l, r)$-th element of $\bar{Y}$ is the solution to the following SDE
	\begin{equation*}
	dy_{lr}(t)=\frac{1}{c_1(t+1)^\beta}\left(\frac{\lambda_{l}}{\lambda_{i_{r}}}-1\right)y_{lr}dt+\frac{\lambda_{l}^{\frac{1}{2}}}{c_1h^{\frac{1}{2}}(t+1)^\beta}dB(t),\quad 1\leq l\leq p',\quad 1\leq r\leq p,
	\end{equation*}
	whose solution has an explicit form
	\begin{equation*}
	y_{lr}(t)=\Phi(t)\left(y_{lr}(0)+\frac{\lambda^\frac{1}{2}}{c_1h^\frac{1}{2}}\int_0^t\Phi^{-1}(s)(s+1)^{-\beta}dB(s)\right),
	\end{equation*}
	where
	\begin{equation*}
	\Phi(t) = \exp\left\{\frac{\lambda_l-\lambda_{i_r}}{c_1\lambda_{i_r}}\frac{(t+1)^{1-\beta}-1}{1-\beta}\right\}.
	\end{equation*}
	The expectational square of $y_{lr}(t)$ then satisfies
	\begin{align}
	\notag\mathbb{E}y_{lr}^2(t)&=\frac{\lambda_l}{c_1^2 h}\Phi^2(t)\int_{0}^t\Phi(s)^{-2}(s+1)^{-2\beta}dB(s)\\
	\label{eq:ph1-dimi-ineq}&\geq C_1\left(\exp\left\{\frac{2(\lambda_l-\lambda_{i_r})}{c_1\lambda_{i_r}}\frac{1}{1-\beta}\left((t+1)^{1-\beta}-1\right)\right\}-1\right),
	\end{align}
	where
	\begin{equation*}
	C_1=\frac{\lambda_l}{c_1^2h(1-\beta)}\left(\frac{(1-\beta)c_1\lambda_{i_r}}{2(\lambda_l-\lambda_{i_r})}\right)^{\frac{1-2\beta}{1-\beta}}\frac{2(\lambda_l-\lambda_{i_r})}{\beta c_1\lambda_{i_r}+2(\lambda_l-\lambda_{i_r})}.
	\end{equation*}
	The inequality \cref{eq:ph1-dimi-ineq} could be attained by taking the variable transformation $$z = \frac{2(\lambda_l-\lambda_{i_r})}{c_1\lambda_{i_r}(1-\beta)}(s+1)^{1-\beta},$$ which leads to the difference of two incomplete Gamma functions.

	Using the relation $Y_{\gamma} (t)= \gamma^{-\frac{1}{2(1-\beta)}}X_{\gamma}(t)V$ and placing the timescale $t$ back to $k\gamma^{\frac{1}{1-\beta}}$, we have for $1\leq l\leq p'$
	\begin{align}
	\notag&\mathbb{E}e_{l}^\top X(X^\top X)^{-1}X^\top e_{l}\\
	\label{eq:ph1goal-dimi}	>&\sum_{r=1}^{p}\frac{C_1\gamma^{\frac{1}{1-\beta}}}{\sigma^{2}_{\max}(X)}\left(\exp\left\{\frac{2(\lambda_l-\lambda_{i_r})}{c_1\lambda_{i_r}}\frac{1}{1-\beta}\left((k\gamma^{\frac{1}{1-\beta}}+1)^{1-\beta}-1\right)\right\}-1\right).
	\end{align}
	Taking the right side of \cref{eq:ph1goal-dimi} to be $\delta $ gives the passage time of the first phase:
	\begin{align}
	\label{eq:N1-dimi}
	k_{\gamma, 1}^{1-\beta}\asymp \frac{c_{1}\lambda_p}{2\nu}\gamma^{-1}\log(\gamma^{-1}).
	\end{align}
	
	\noindent \emph{[Phase 2]}.\quad 
	Following the same steps as in the proof of \cref{th:rate} directly yields the estimated traversing time of the second phase:
	\begin{align}\label{eq:N2-dimi}
	k_{\gamma, 2}^{1-\beta}=\mathcal{O}(\gamma^{-1}).
	\end{align}
	
	\noindent \emph{[Phase 3]}.\quad
	This phase depicts the behaviour of SGN oscillating around the optimal set $\mathcal{X}^*$, and in the neighbourhood of $\mathcal{X}^*$, $X_{\gamma}(t)$ could be formulated like \cref{eq:y} as
	\begin{equation}
	\label{eq:y-dimi}
	X_{\gamma}(t)=E_{p'}\Lambda_{p'}^{\frac{1}{2}}WV^\top+\mathcal{O}(\gamma^{\frac{1}{2(1-\beta)}}).
	\end{equation}
	In this case, \cref{lemma:kT2-dimi} indicates that the $(l, r)$-th entry of $\wideubar{Y}$ takes the form
	\begin{equation*}
	y_{lr}(t)=\Phi(t)\left(y_{lr}(0)+\frac{\lambda^\frac{1}{2}_l}{c_1h^\frac{1}{2}}\int_0^t\Phi^{-1}(s)(s+1)^{-\beta}dB(s)\right),
	\end{equation*}
	where
	\begin{equation*}
	\Phi(t) = \exp\left\{\frac{\lambda_l-\lambda_{r}}{c_1\lambda_{r}}\frac{(t+1)^{1-\beta}-1}{1-\beta}\right\}.
	\end{equation*}
	As before, we have the expectational square
	\begin{align*}
	\mathbb{E}y_{lr}^{2}(t)=\Phi^2(t)\left(y_{lr}^2(0)+\frac{\lambda_l}{c_1^2 h}\int_{0}^{t}\Phi^{-2}(s)(s+1)^{-2\beta}ds\right).
	\end{align*}
	According to $Y_{\gamma}(t) = \gamma^{-
		\frac{1}{2(1-\beta)}}X_{\gamma}(t)V$ and the expression \cref{eq:y-dimi}, we have
	\begin{align*}
	&\mathbb{E}\tr\left(\wideubar{X}^\top\wideubar{X}(X^\top X)^{-1}\right)\\
	=&\gamma^{\frac{1}{1-\beta}}\mathbb{E}\tr\left(V\wideubar{Y}^\top\wideubar{Y}V^\top V\Lambda_{p}^{-1}V^\top\right)+\mathcal{O}\left(\gamma^{\frac{3}{2(1-\beta)}}\right)\\
	\asymp&\gamma^{\frac{1}{1-\beta}}\sum_{l=p'+1}^{n}\sum_{r=1}^{p}\exp\left\{\frac{2(\lambda_{l}-\lambda_{r})}{c_1\lambda_{r}}\frac{(t+1)^{1-\beta}-1}{1-\beta}\right\}\frac{y_{lr}^2(0)}{\lambda_r}\\
	\!+&\!\!\sum_{l=p'\!+\!1}^{n}\!\sum_{r=1}^{p}\frac{\gamma^{\frac{1}{1\!-\!\beta}}\lambda_l}{c_1^2h\lambda_r}\exp\!\left\{\frac{2(\lambda_l\!-\!\lambda_r)}{c_1\lambda_r}\frac{(t\!+\!1)^{1\!-\!\beta}}{1\!-\!\beta}\right\}\int_{0}^{t}\!\!\exp\!\left\{\frac{2(\lambda_r\!-\!\lambda_l)}{c_{1}\lambda_r}\frac{(s\!+\!1)^{1\!-\!\beta}}{1\!-\!\beta}\right\}\!(s\!+\!1)^{-\!2\beta}ds\!\\
	\leq&~p\delta\exp\left\{\frac{2\nu}{c_1\lambda_p}\frac{(t+1)^{1-\beta}-1}{1-\beta}\right\}\\
	\!+&\!\!\sum_{l=p'\!+\!1}^{n}\!\sum_{r=1}^{p}\frac{\gamma^{\frac{1}{1\!-\!\beta}}\lambda_l}{c_1^2h\lambda_r}\exp\!\left\{\frac{2(\lambda_l\!-\!\lambda_r)}{c_1\lambda_r}\frac{(t\!+\!1)^{1\!-\!\beta}}{1\!-\!\beta}\right\}\int_{0}^{t}\!\!\exp\!\left\{\frac{2(\lambda_r\!-\!\lambda_l)}{c_{1}\lambda_r}\frac{(s\!+\!1)^{1\!-\!\beta}}{1\!-\!\beta}\right\}\!(s\!+\!1)^{-\beta}ds,\!
	\end{align*}
	Then, using $t = k\gamma^{\frac{1}{1-\beta}}$ and setting
	$$
	p\delta\exp\left\{\frac{2\nu}{c_1\lambda_p}\frac{(t+1)^{1-\beta}-1}{1-\beta}\right\}=o(\gamma^{\frac{1}{1-\beta}}),
	$$
	give the traversing time of the last phase:
	\begin{equation}
	\label{eq:N3-dimi}k_{\gamma, 3}^{1-\beta}=\frac{c_{1}\lambda_p}{2\nu}\gamma^{-1}\log(\gamma^{-1}).
	\end{equation}	
	
	\noindent\emph{[Combination of three phases]}.\quad
	Adding up \cref{eq:N1}, \cref{eq:N2} and \cref{eq:N3}, we obtain an estimate on the global convergence time of SGN asymptotically as
	\begin{equation*}
	k_{\gamma}^{1-\beta}=	k_{\gamma, 1}^{1-\beta}+k_{\gamma, 2}^{1-\beta}+k_{\gamma, 3}^{1-\beta}\asymp\frac{c_{1}\lambda_p}{\nu}\gamma^{-1}\log(\gamma^{-1}).
	\end{equation*}
	The expectational estimation error of the approximation $X^{(k_\gamma)}$ is
	\begin{align}
	\notag&\mathbb{E}\Vert\sin\Theta( X^{(k_{\gamma})}, U_{p'})\Vert_{\text{\rm F}}^{2}\\
	\notag\lesssim&~\frac{\gamma^{\frac{1}{1-\beta}}}{c_1h}\sum_{l=p'+1}^{n}\sum_{r=1}^{p}\frac{\lambda_l}{2(\lambda_{r}-\lambda_{l})}\left(1-\exp\left\{\frac{2(\lambda_r-\lambda_l)}{c_{1}(1-\beta)\lambda_r}\left(1-(t+1)^{1-\beta}\right)\right\}\right)\\
	\label{eq:error-dimi} \leq&~\frac{\lambda_{p'+1}p(n-p')}{c_1\nu h}\gamma^{\frac{1}{1-\beta}}.
	\end{align}
	Given the sample size $m$, we let
	\begin{equation}
	\label{eq:alpha-dimi} \widetilde{\gamma}(K)=\frac{(1-\beta)\log K}{K^{1-\beta}},\quad c_1 = \frac{\nu}{\lambda_p},
	\end{equation}
	where $K = \left\lceil m/h\right\rceil$ is the total number of iterations, and
	we have
	\begin{equation*}
	k_{\widetilde{\gamma}}^{1-\beta}\asymp\left(c_1\lambda_{p}\nu^{-1}\widetilde{\gamma}^{-1}\log\widetilde{\gamma}^{-1}\right)\asymp K^{1-\beta}.
	\end{equation*}
	Substituting \cref{eq:alpha-dimi} into \cref{eq:error-dimi} and further setting $\beta = 1-1/\log K$ give the error bound \cref{eq:rate-dimi2} for SGN with the diminishing stepsize \cref{eq:dimi}.
\end{proof}

\begin{remark}	
	Comparing the bound \cref{eq:rate} for the constant stepsize case with the one \cref{eq:rate-dimi2} for the diminishing stepsize case, we see that the latter bound removes the $\log K$ factor, and thus yields the optimality of SGN method.
\end{remark}

\section{Numerical experiments}\label{sec:numeri}
This section presents the numerical performance of our proposed stochastic GN algorithms for solving OPCA on both synthetic Gaussian data and frequently-used real data in machine learning. All experiments are implemented on a MacBook Pro 3.1GHz laptop with 8GB of RAM using Matlab 2018a. 

\subsection{Testing examples}\label{sec:numeri-problem}
We test each competing algorithm on both simulated data of nearly low-rank structure and three of the most popular real datasets in machine learning.
Given the integers $p\ll n$ and $p\leq p'\ll n$, the simulated data are generated from an $n$-dimensional Gaussian distribution $\mathcal{N}_{n}(0, \Sigma)$ with
\begin{equation}
\Sigma = Q D Q^\top + \rho^2 I_{n},
\end{equation}
where $Q\in\mathbb{R}^{n\times p'}$ is a randomly generated orthogonal matrix, $D\in\mathbb{R}^{p'\times p'}$ is a diagonal matrix with $D_{ii} = \mu_{i}\in[\underline{\mu},~\bar{\mu}]$.
We categorize the data into three groups as shown in \cref{table:simudata} for specific purpose.
\begin{table}[H]\label{table:simudata}
	\centering
	\begin{tabular}{c|c|c}
		\toprule
		{\bf Top Eigenvalues}&{\bf Parameters}&{\bf Remark}\\
		\midrule
		\multicolumn{3}{c}{\texttt{Gau-gap-1}~($p'=p$)}\\
		\midrule
		\makecell[c]{
			$\mu_{i}\sim \mathcal{U}(\underline{\mu}, \bar{\mu})$\\sorted in a descending order}
		&\makecell[c]{$\underline{\mu}=0.01; \bar{\mu} = 1, 10, 100;$\\$p=1, \cdots, 30$}
		&\makecell[c]{various gaps~\&\\uniformity}\\
		\midrule
		\multicolumn{3}{c}{\texttt{Gau-gap-2}~($p'=p$)}\\
		\midrule
		\makecell[c]{$\mu_{1}=\cdots=\mu_{p_1}=\bar{\mu}$;\\$\mu_{p_1+1}=\cdots=\mu_{p}=\underline{\mu}$}
		&\makecell[c]{$\underline{\mu}=1;~ \bar{\mu}=100$;\\$p=30$;~$p_{1} = 0, 5, 15, 25$}
		&\makecell[c]{~~fixed gap~\&\\nonuniformity}\\
		\midrule
		\multicolumn{3}{c}{\texttt{Gau-ngap}~($p'>p$)}\\
		\midrule	
		\makecell[c]{$\mu_{j}\sim \mathcal{U}(\underline{\mu}, \bar{\mu}),~1\leq j\leq p$\\sorted in a descending order; \\ $\mu_{p+1}=\cdots=\mu_{p'}=\mu_{p}$}
		&\makecell[c]{$\underline{\mu}=0.01;~ \bar{\mu}=100$;\\$p=30$;~$p' = 32, 45, 60$}
		&no gap\\
		\bottomrule
	\end{tabular}
	\caption{Descriptions of simulated data.}
\end{table}
\vspace{-0.6cm}
\noindent The other parameters are fixed to 
$
n=500,~ m=10000,~\rho = 0.1.
$

The real data examples are described in \cref{table:realdata}.
\begin{table}[H]\label{table:realdata}
	\centering
	\begin{tabular}{c|c|c|c}
		\toprule
		{\bf Name}& $\mathbf m$& $\mathbf n$&{\bf Remark}\\
		\midrule
		\texttt{MNIST}~\cite{lecun1998gradient}&10000&784&test set, hand-written digits in gray-scale\\
		\texttt{Fashion-MNIST}~\cite{xiao2017fashion}&10000&784&test set, fashion product in gray-scale\\
		\texttt{CIFAR-10}~\cite{krizhevsky2009learning}&10000&3072&test set, color images of natural objects\\
		\bottomrule
	\end{tabular}
	\caption{Descriptions of real data.}
\end{table}
\vspace{-0.6cm}

\subsection{Default settings of algorithms}
The algorithms for comparison are Oja's iteration \cref{eq:oja} with constant stepsize $\alpha^{(k)}= \alpha$, diminishing stepsize $\alpha^{(k)}= \gamma/(k+1)$, its adaptive variant AdaOja \cref{eq:adaoja} and the proposed SGN using constant and diminishing stepsizes along with the adaptive-stepsize version AdaSGN.
For the test in \cref{sec:numeri-sgn-constant}, the stepsize parameter $\alpha$ is set according to \cref{eq:fixedstepsize}.
For the test in \cref{sec:numeri-sgn}, the stepsize parameter $\gamma$ goes through the set $\{2^{-5}, 2^{-4},\cdots, 2^{4}, 2^{5}\}$ and 
for the test in \cref{sec:numeri-adasgn}, $\gamma$ is optimally selected from this set, used as a benchmark result for the parameter-free AdaSGN.
Both of the single-pass $h=1$ and mini-batch $h=10, 100$ models are considered in the tests.
The accuracy of algorithms is measured by $\Vert \sin\Theta(X^{(k)}, U_{p'})\Vert_{\text{F}}^2/p\in[0, 1]$. Due to the inaccessible population covariance $\Sigma$ in the real-data case, we use the full-sampled empirical one $\Sigma_{m}$ to compute $U_{p'}$ as a compromise. 
All results shown in this paper are the average of 100 runs from random initial points.

\subsection{Comparison on constant stepsizes}\label{sec:numeri-sgn-constant}

The experiments in this subsection are to (i)~show the three-phase behaviour of SGN as introduced in \cref{sec:theory} and verify the statement in \cref{remark: comparison}; (ii)~prove the stability of SGN w.r.t. the random initialization; (iii)~validate that SGN produces smaller volatility in the third phase than Oja's iteration when $\lambda_{1}$ is large as analyzed in \cref{remark:variance}, when adopting the constant stepsize.

To provide a fair comparison with the three-phase result of Oja's iteration in \cite{li2017diffusion}, we set $h=1$ and $p=p'=1$ in this test. Under the case of fixed $\alpha$ but $100$~random initializations and the case of fixed initial point $X^{(0)}=e_{n}$ but different $\alpha$ and $\mu_{1}$, the averaged results of SGN and Oja's iteration with variance shaded are presented in \cref{fig:constSGN_simu} and \cref{fig:constSGN_simu_2}, respectively, where the ``Z"-shaped trajectories are consistent with the three-phase analysis in the theory.
As could be observed in \cref{fig:constSGN_simu}, the initialization has obvious impact on the stability of Oja's itertiaon, and however has little effect on SGN. Fixing the initial value to the saddle point $X^{(0)}=e_{n}$, \cref{fig:constSGN_simu_2} implies that as the top eigenvalue gets larger, the Oja's iteration fluctuates more significantly and yields results of much lower accuaray than SGN, and thus requires smaller stepsizes for the purposes of obtaining both the smaller oscillation and the higher accuracy, which agrees with the statements in \cref{remark: comparison} and \cref{remark:variance}.

\begin{figure}[h]
	\centering
	\subfigure[$\alpha=4\times 10^{-3}$]{
		\begin{minipage}[t]{4.5cm}
			\centering
			\includegraphics[width=4.5cm]{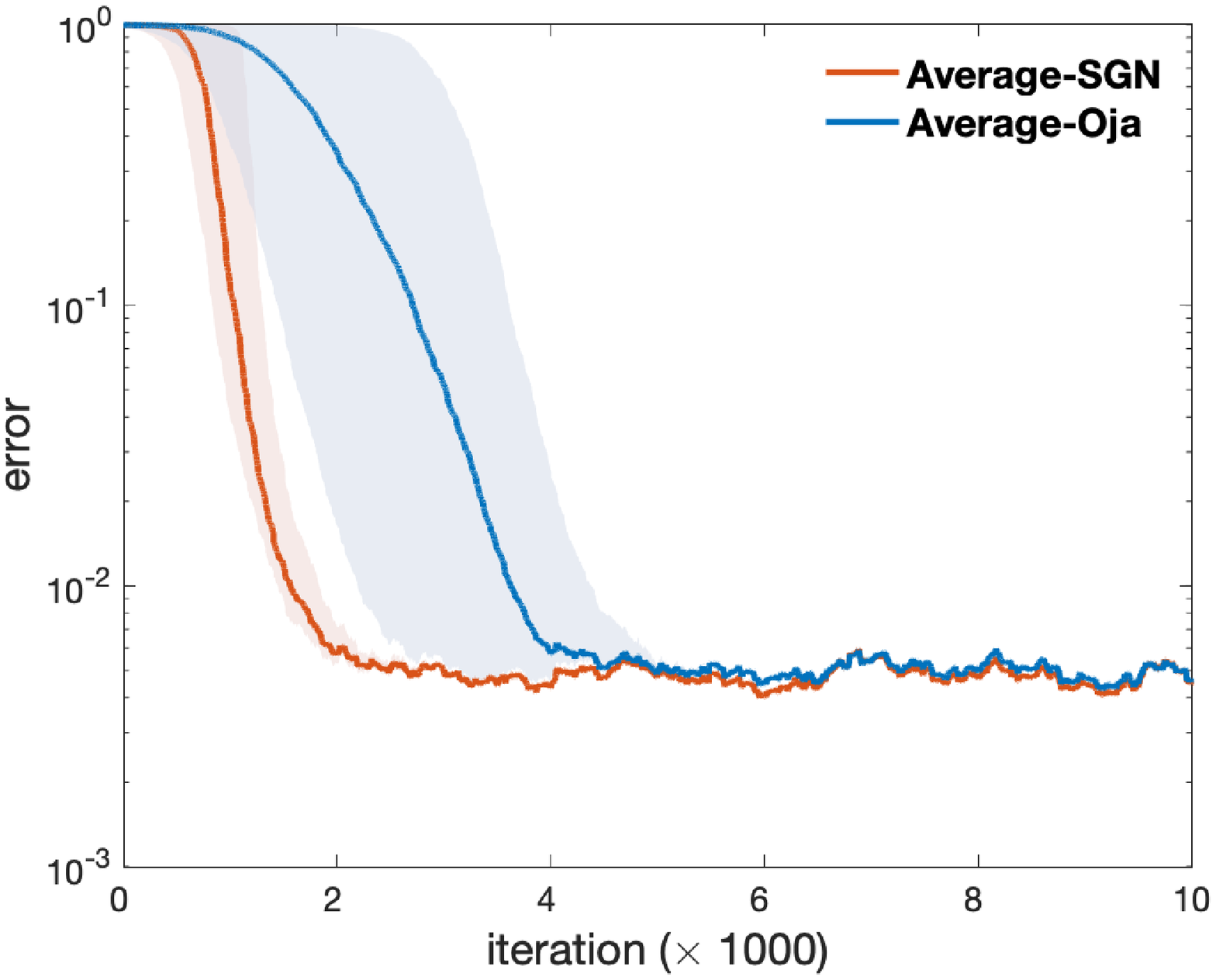}
		\end{minipage}
	}
	\subfigure[$\alpha=2\times 10^{-3}$]{
		\begin{minipage}[t]{4.5cm}
			\centering
			\includegraphics[width=4.5cm]{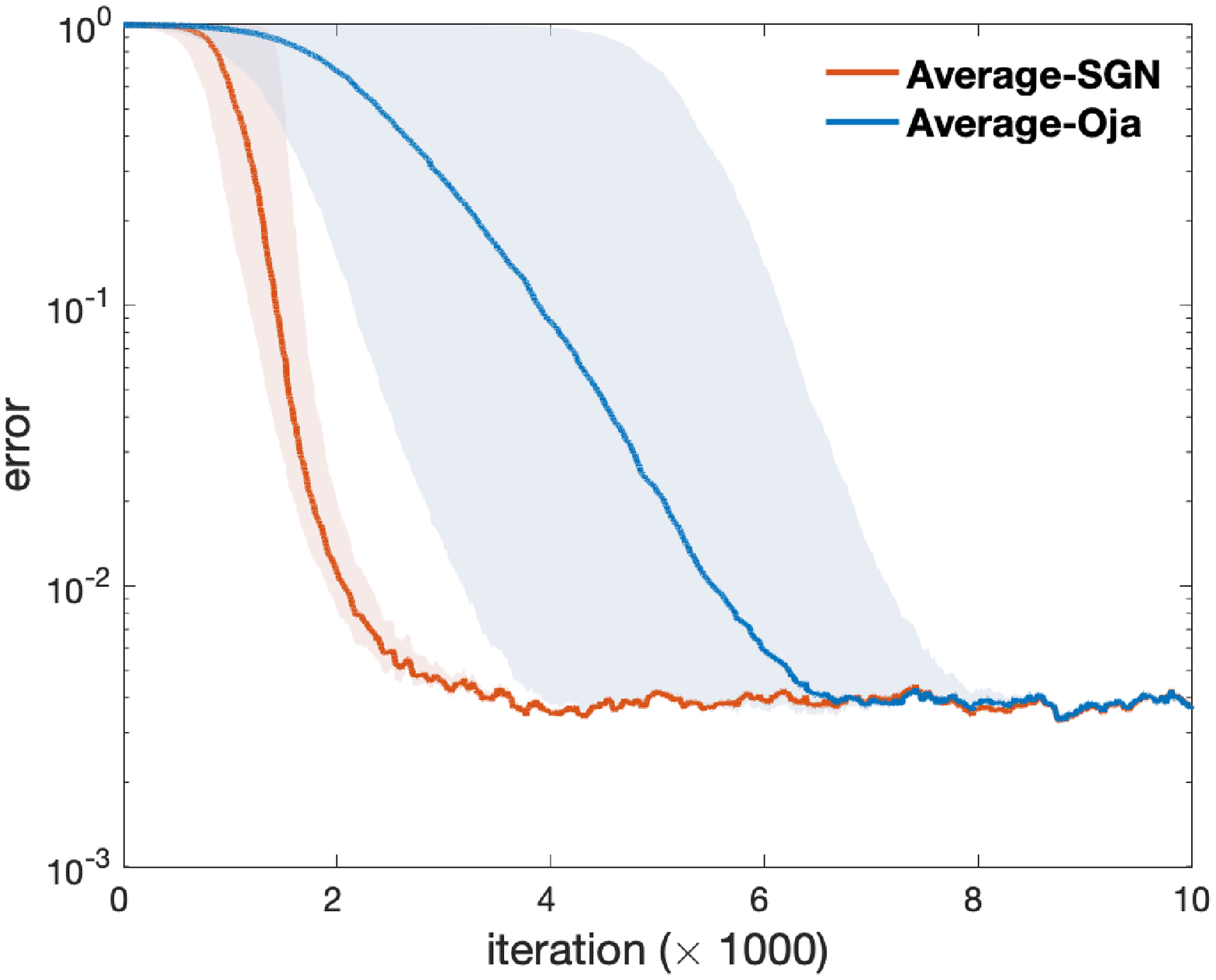}
		\end{minipage}
	}
	\subfigure[$\alpha=1\times 10^{-3}$]{
		\begin{minipage}[t]{4.5cm}
			\centering
			\includegraphics[width=4.5cm]{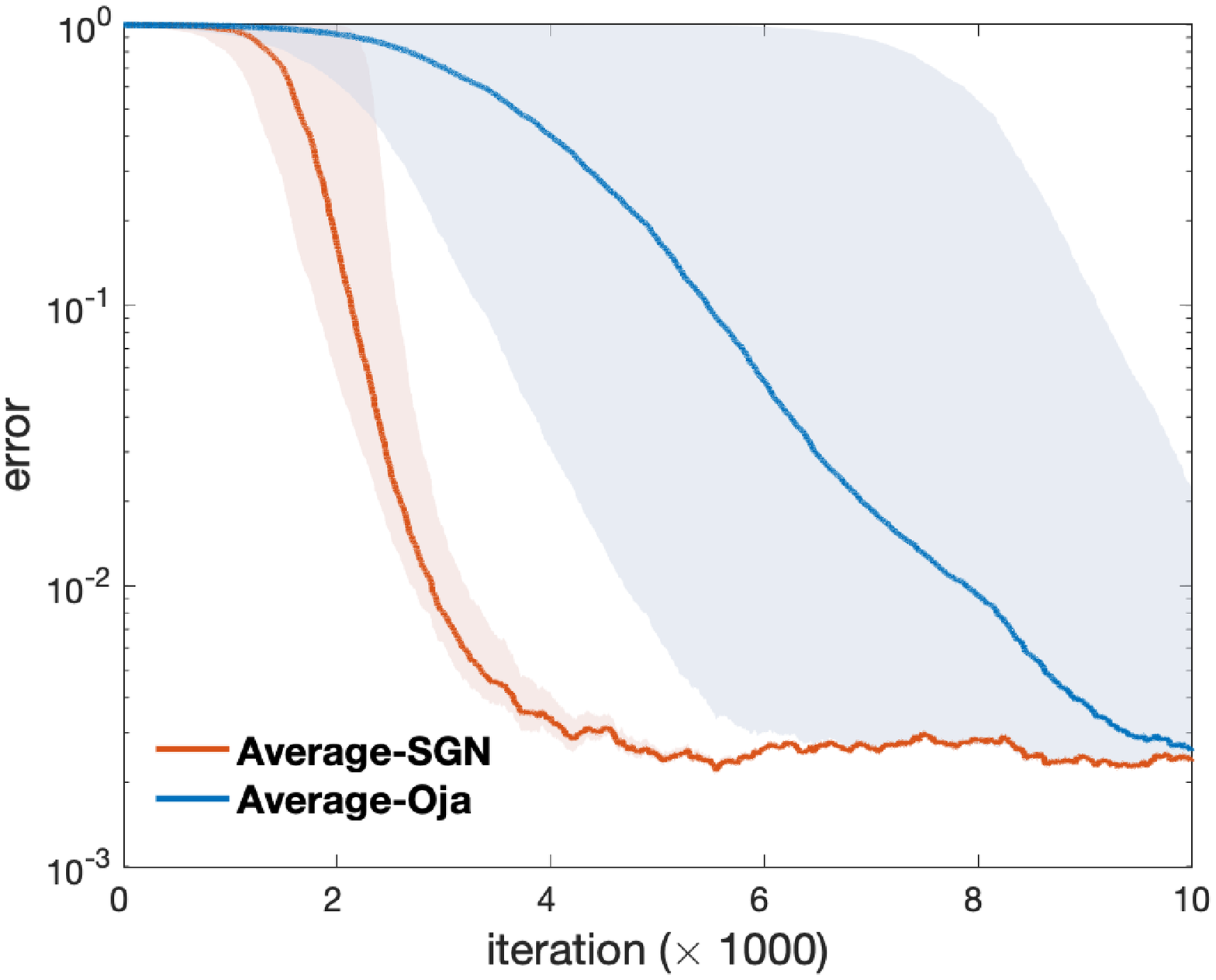}
		\end{minipage}
	}
	\caption{The estimation error of Oja's iteration and
		SGN using constant stepsizes $\alpha^{(k)}=\alpha$ on simulated data with $\mu_{1}=1$ from $100$ randomly generated initial points.}
	\label{fig:constSGN_simu}
\end{figure}

\begin{figure}[h]
	\centering
	\subfigure[$\mu_{1}=1$,~$\alpha \in I_{test}$]{
		\begin{minipage}[t]{4.5cm}
			\centering
			\includegraphics[width=4.5cm]{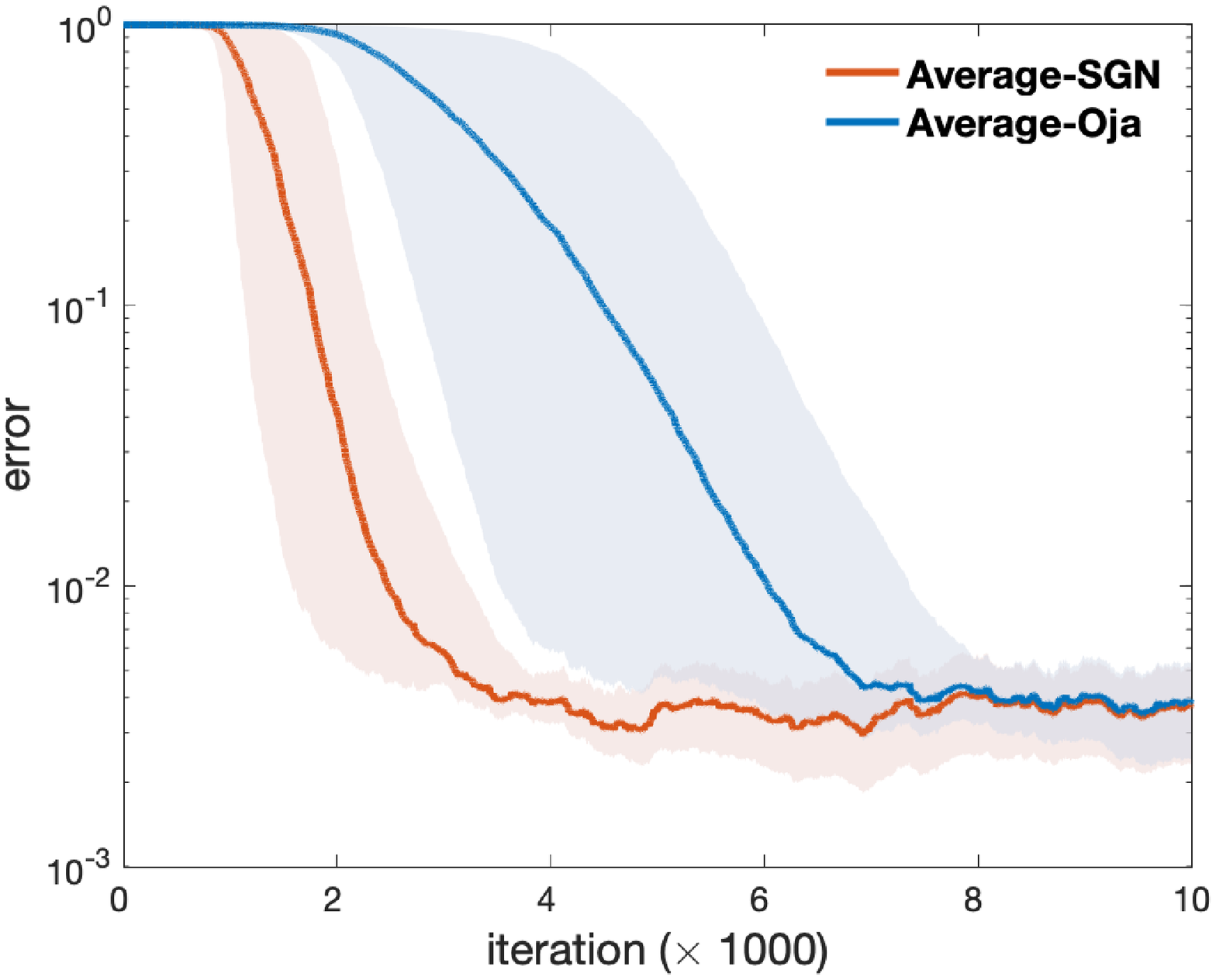}
		\end{minipage}
	}
	\subfigure[$\mu_{1}=10$,~$\alpha \in I_{test}$]{
		\begin{minipage}[t]{4.5cm}
			\centering
			\includegraphics[width=4.5cm]{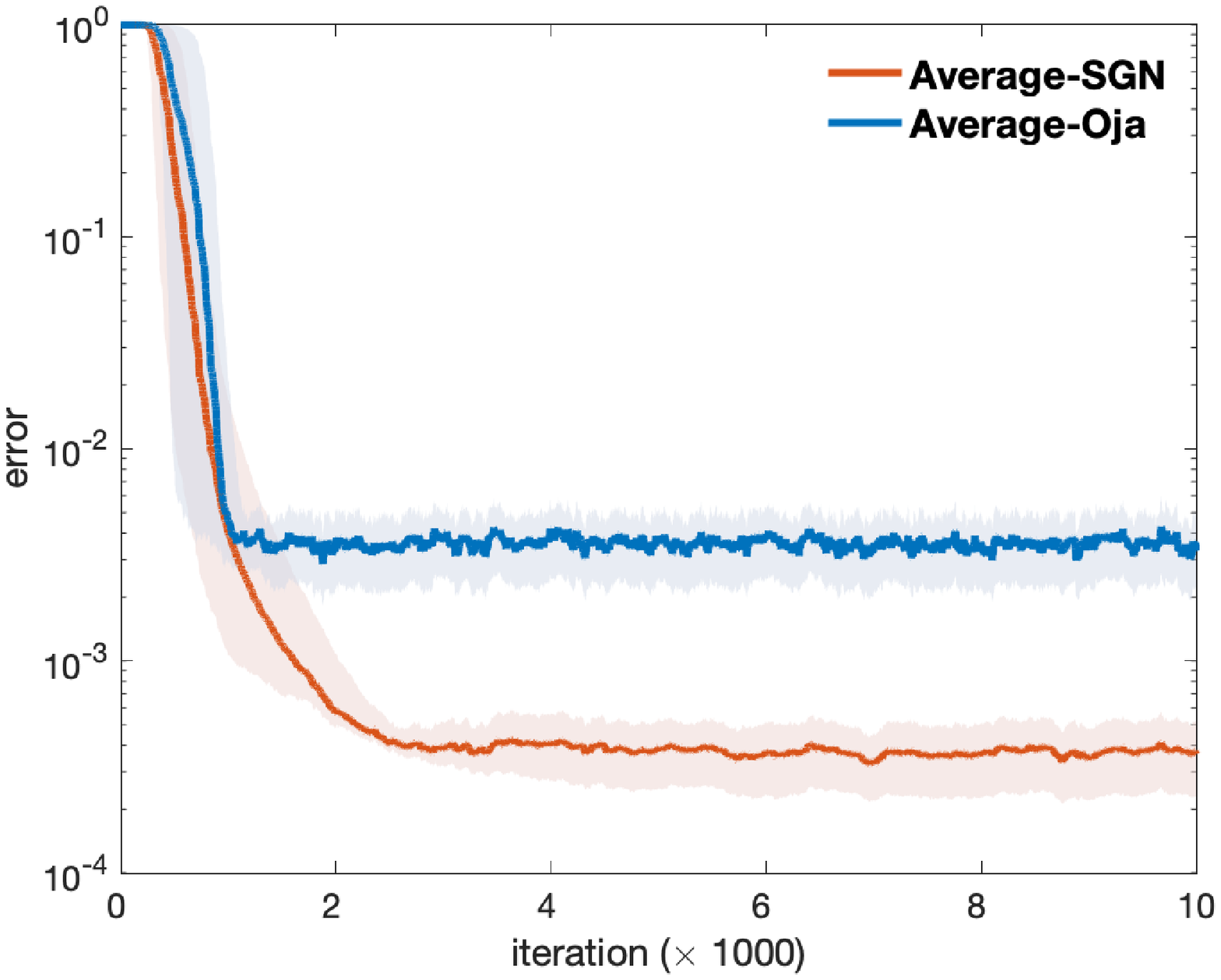}
		\end{minipage}
	}
	\subfigure[$\mu_{1}=100$,~$\alpha \in I_{test}$]{
		\begin{minipage}[t]{4.5cm}
			\centering
			\includegraphics[width=4.5cm]{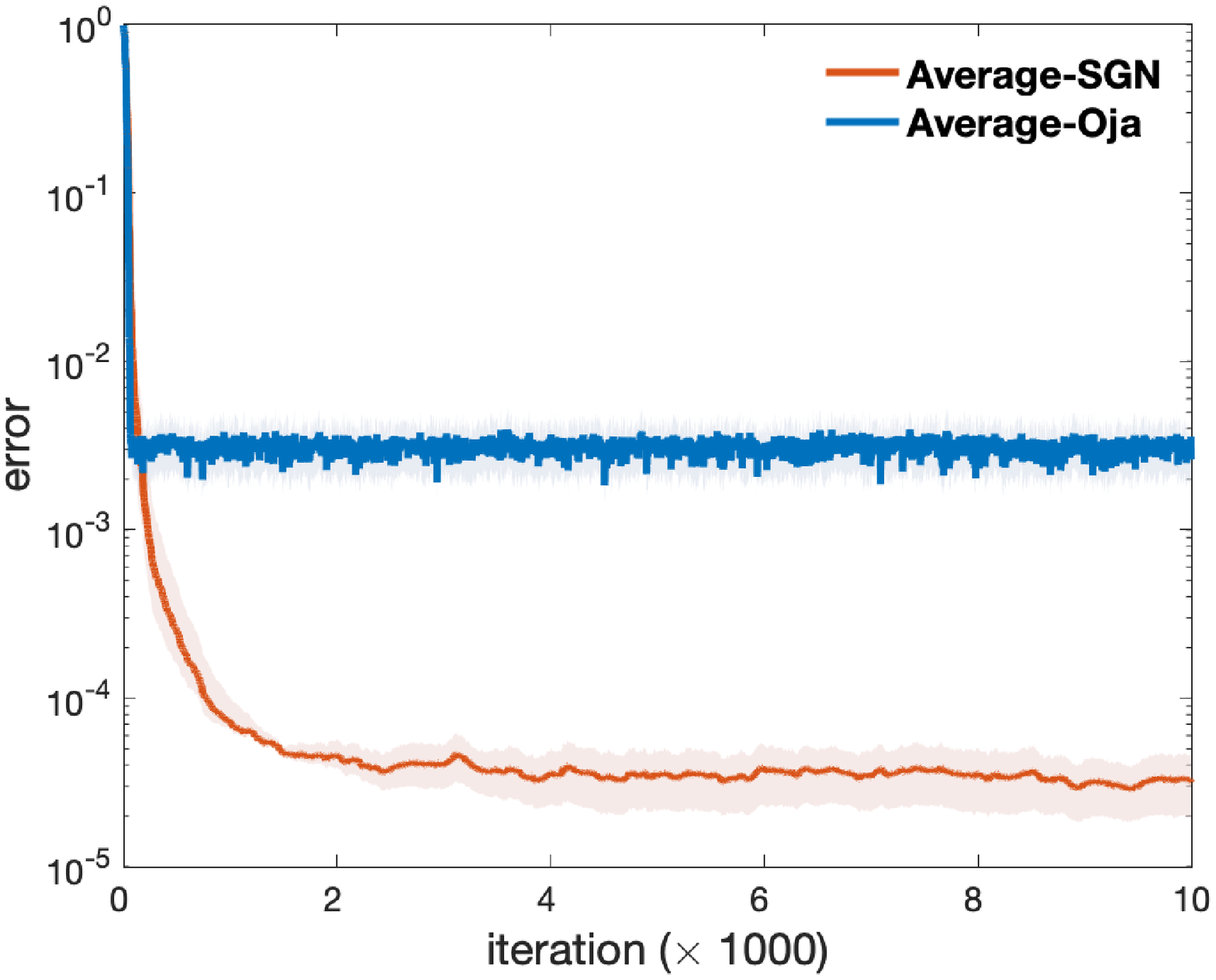}
		\end{minipage}
	}

	\subfigure[$\mu_{1}=10^4$,~$\alpha \in I_{test}$]{
		\begin{minipage}[t]{4.5cm}
			\centering
			\includegraphics[width=4.5cm]{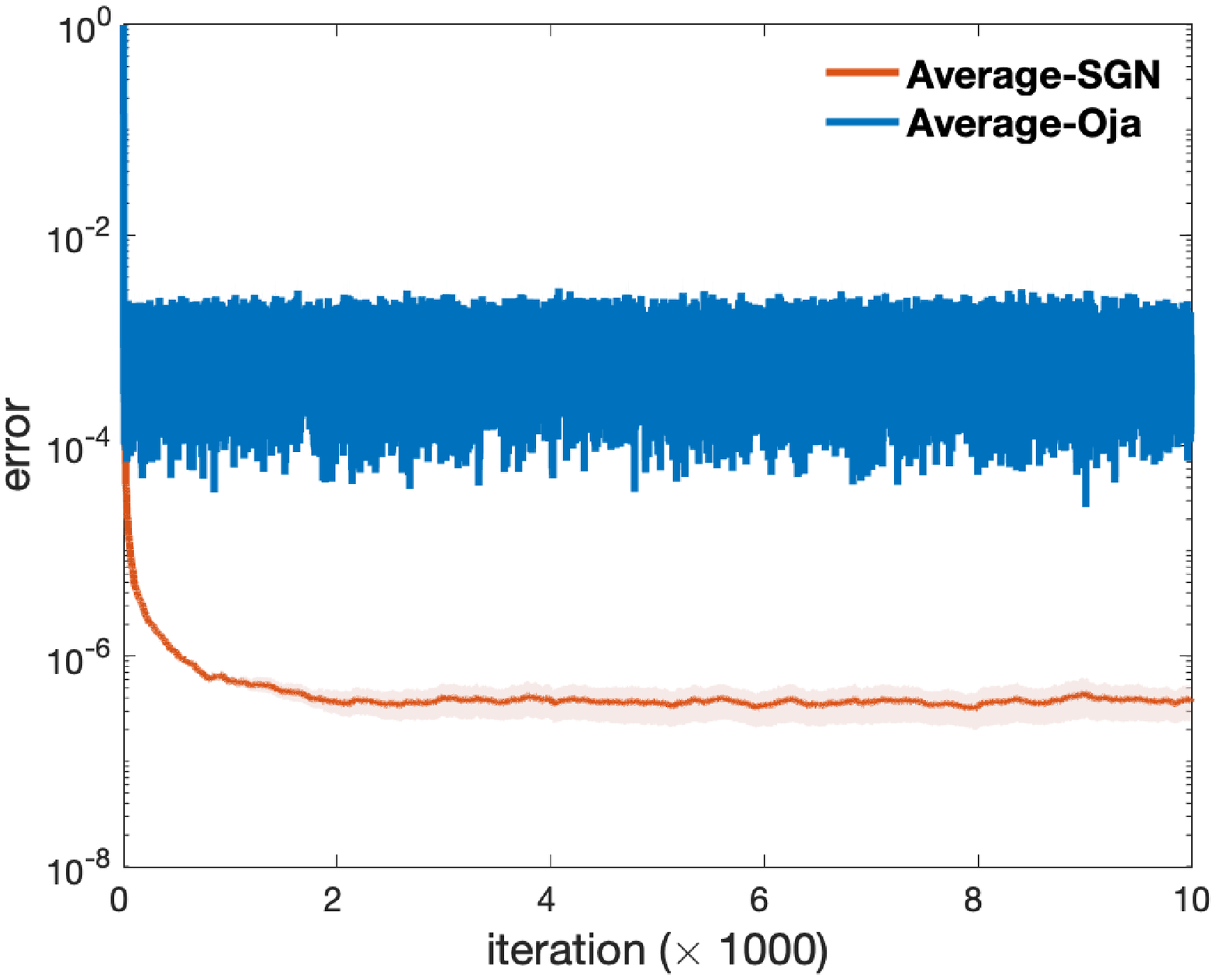}
		\end{minipage}
	}
	\subfigure[$\mu_{1}=10^4$,~$\alpha \in I_{test}/10$]{
		\begin{minipage}[t]{4.5cm}
			\centering
			\includegraphics[width=4.5cm]{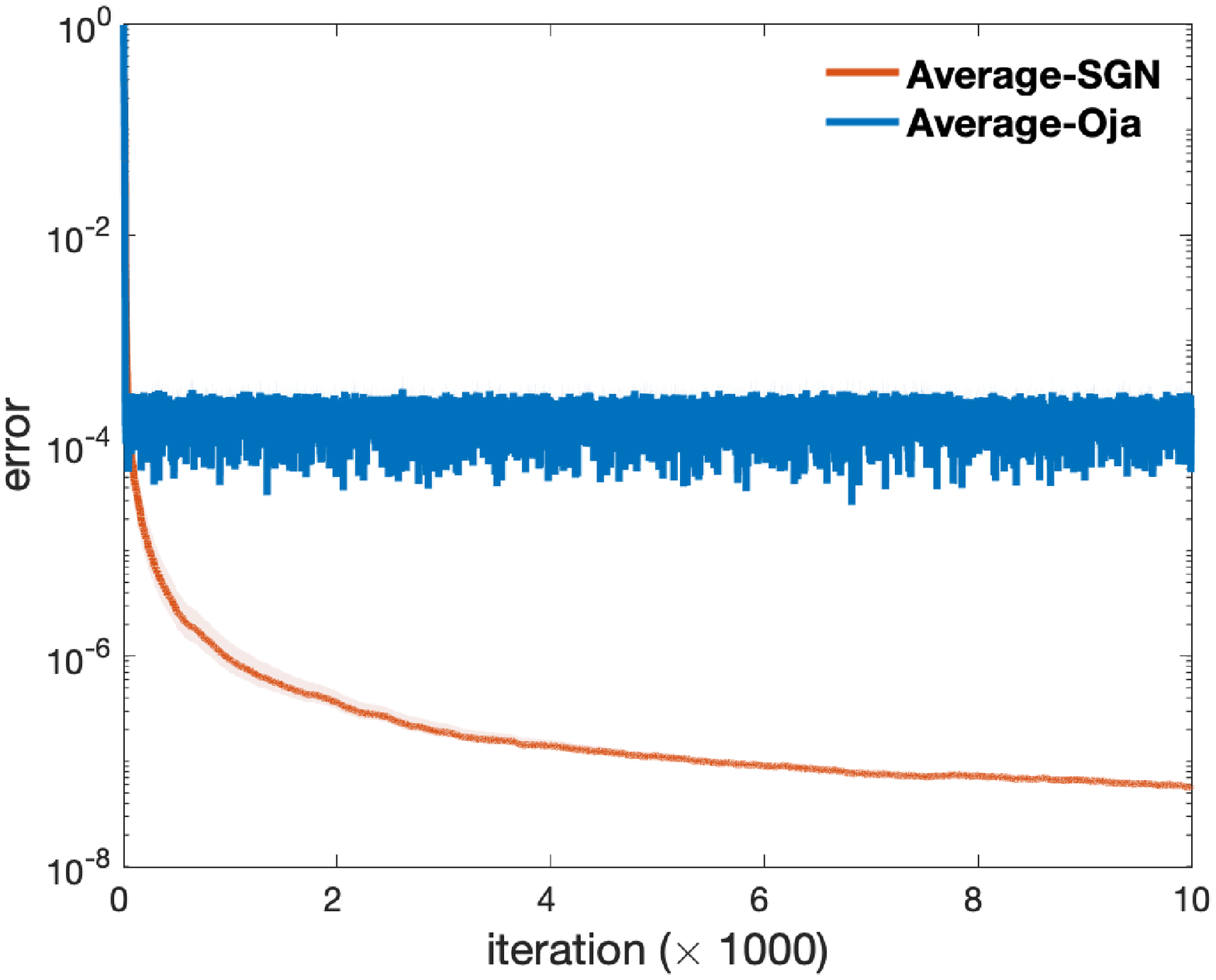}
		\end{minipage}
	}
	\subfigure[$\mu_{1}=10^4$,~$\alpha \in I_{test}/100$]{
		\begin{minipage}[t]{4.5cm}
			\centering
			\includegraphics[width=4.5cm]{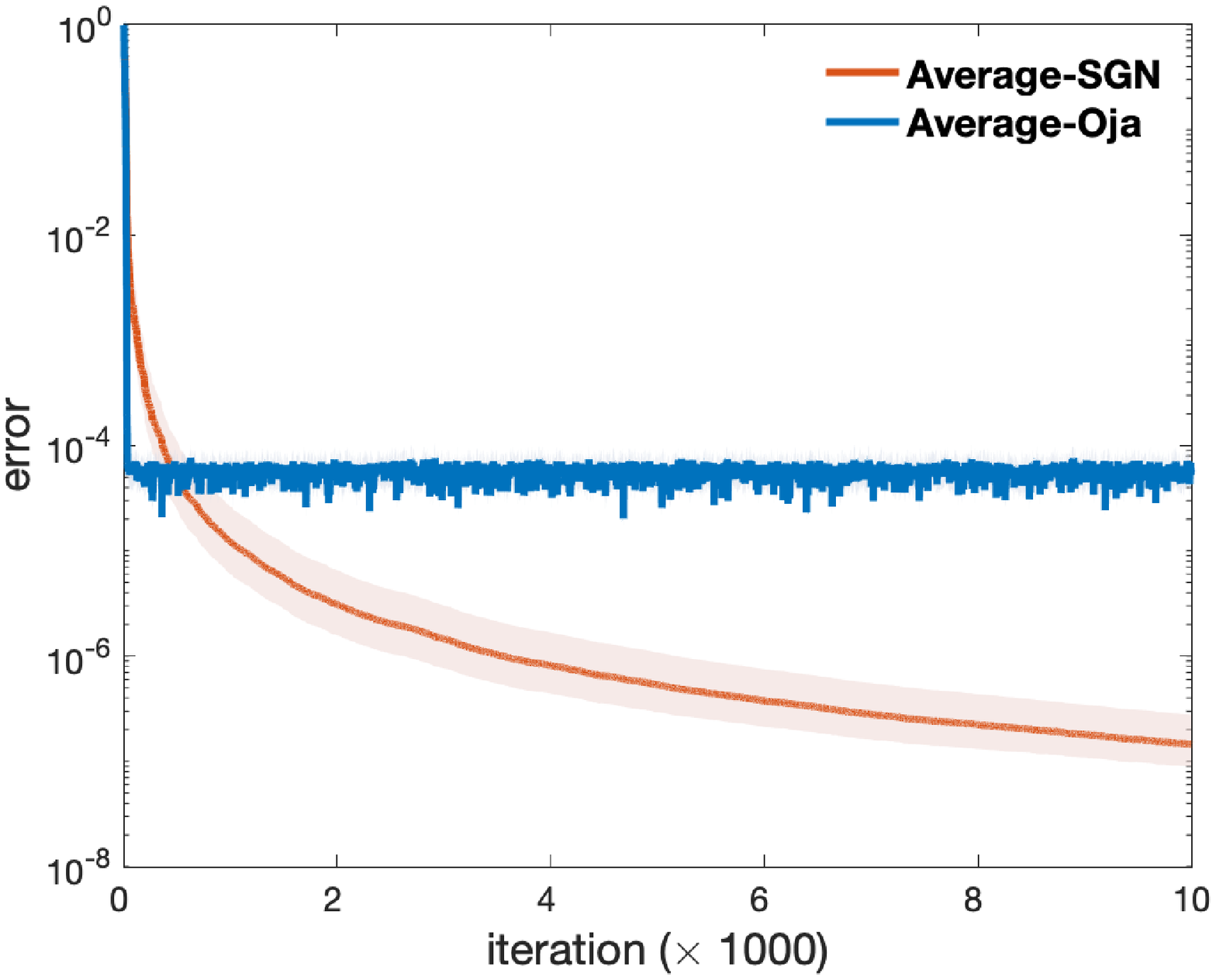}
		\end{minipage}
	}
	\caption{The estimation error of Oja's iteration and
		SGN using  $\alpha^{(k)}=\alpha$ with $100$ evenly spaced $\alpha \in I_{test}=[9\times 10^{-4}, 2\times 10^{-3}], I_{test}/10, I_{test}/100$~starting at one saddle point.}
	\label{fig:constSGN_simu_2}
\end{figure}

\subsection{Comparison on diminishing stepsizes}\label{sec:numeri-sgn}
In this subsection, we are going to demonstrate the robustness of SGN over Oja's iteration when adopting the diminishing stepsize $\alpha^{(k)}= \gamma/(k+1)$ in terms of the performance as $\gamma$ varies and the optimally-tuned $\gamma$ from $\{2^{-5}, 2^{-4},\cdots, 2^{4}, 2^{5}\}$.

In \cref{fig:T1gau1,fig:T1gau2,fig:T1cifar}, the gradation in color indicates the change of parameter $\gamma$ and the deeper-colored dotted line represents the larger value of parameter $\gamma$ used.
The performance of the proposed SGN is shown in the warm tone while the one of Oja's iteration is shown in the cold tone.
Here, we focus on the overall performance of two algorithms w.r.t. the variation in $\gamma$.
In addition, the optimal performer is highlighted in sold line with corresponding $\gamma$ being bolded in the legend.
\cref{fig:T1gau1} exhibits the results of processing \texttt{Gau-gap-1} with $\bar{\mu}=10$. 
When the number of PCs (i.e., $p$) increases, on one side, SGN exhibits a better performance than Oja's iteration in overall performance. On the other side, the optimal $\gamma$ for Oja's iteration goes from $2^{-2}$ to $2^{5}$ in the single-pass model while the one for SGN adimts a slight change from $1$ to $2$ in the single-pass model and stays in the value of one in the mini-batch model.
Similar results could be observed in \cref{fig:T1cifar} for \texttt{CIFAR-$10$} dataset.
Moreover, in \texttt{Gau-gap-2}, we fix the gap and $p$, and change the distribution of top $p$ eigenvalues. As being seen in \cref{fig:T1gau2}, Oja's iteration becomes dramatically worse as $p_{1}$ increases, while SGN could manage it within the given parameter set. To save space, the results of other datasets listed in \cref{sec:numeri-problem} are not provided here, which are in a good agreement with the above observations.

Further, we present the best parameter $\gamma\in\{2^{-5}, 2^{-4},\cdots, 2^{4}, 2^{5}\}$ in handling various datasets in \cref{fig:bestsimu} and \cref{fig:bestreal}. 
The color of the area $\{\gamma\leq 1\}$ is filled with gray.
It is apparent that the value of the best $\gamma$ for SGN varies smoothly in both different datasets and number of PCs, and however the one for Oja's iteration changes greatly. And interestingly, a larger $p$ always calls for a larger $\gamma$.

We have also performed tests on the diminishing stepsize \cref{eq:dimi} with different choices of $\beta$, which produce the results similar to the case of $\beta=1$ as shown in \cref{fig:T1gau1}-\ref{fig:bestreal}.
For brevity,, these results are not presented in this paper.

\begin{figure}[h]
	\centering
	\subfigure[$h=1$,~$p=1$]{
		\begin{minipage}[t]{0.47\linewidth}
			\includegraphics[width=6cm,trim=0.75cm 3cm 1.8cm 2.5cm,clip]{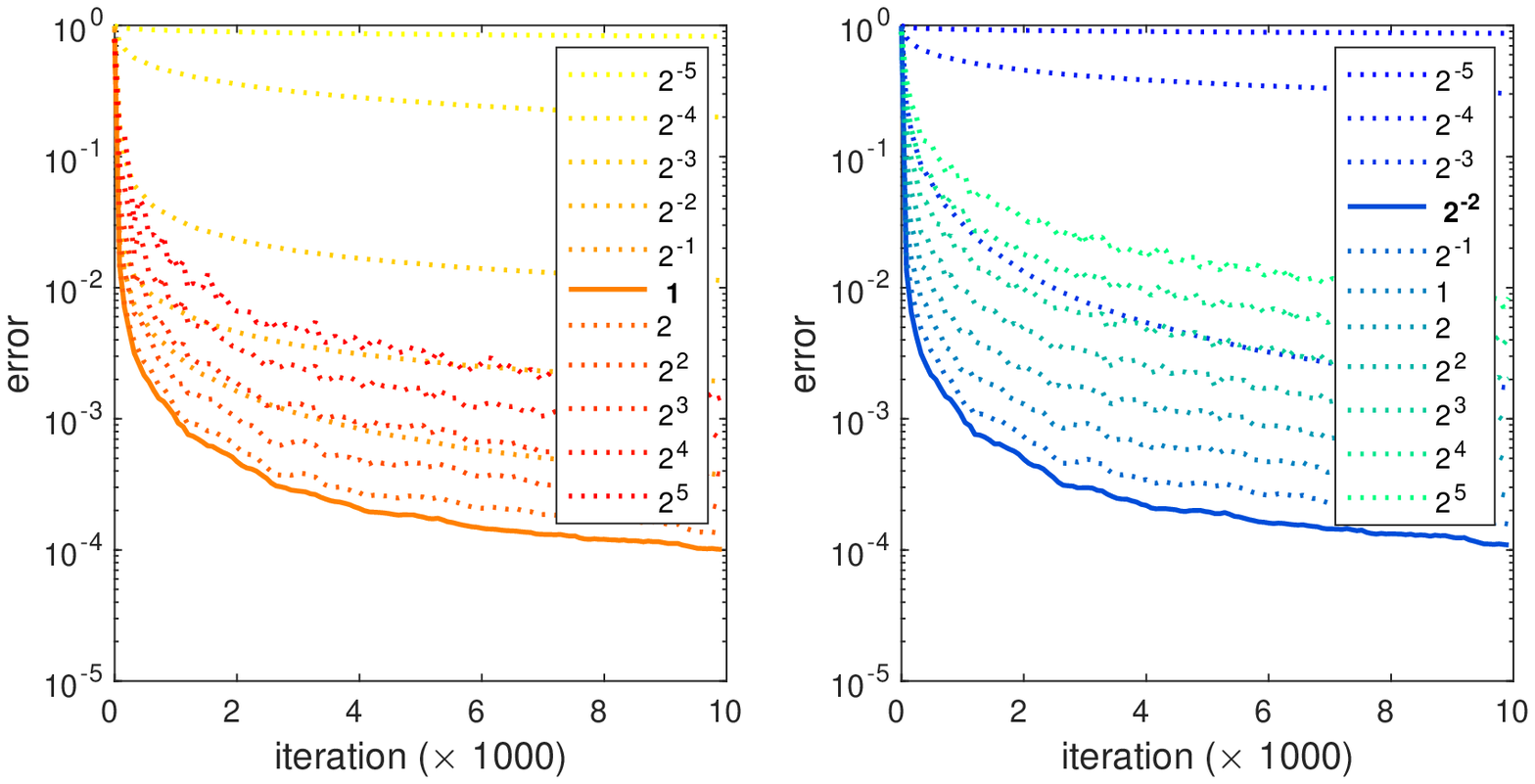}
		\end{minipage}
	}
	\subfigure[$h=10$,~$p=1$]{
		\begin{minipage}[t]{0.47\linewidth}
			\includegraphics[width=6cm,trim=0.75cm 3cm 1.8cm 2.5cm,clip]{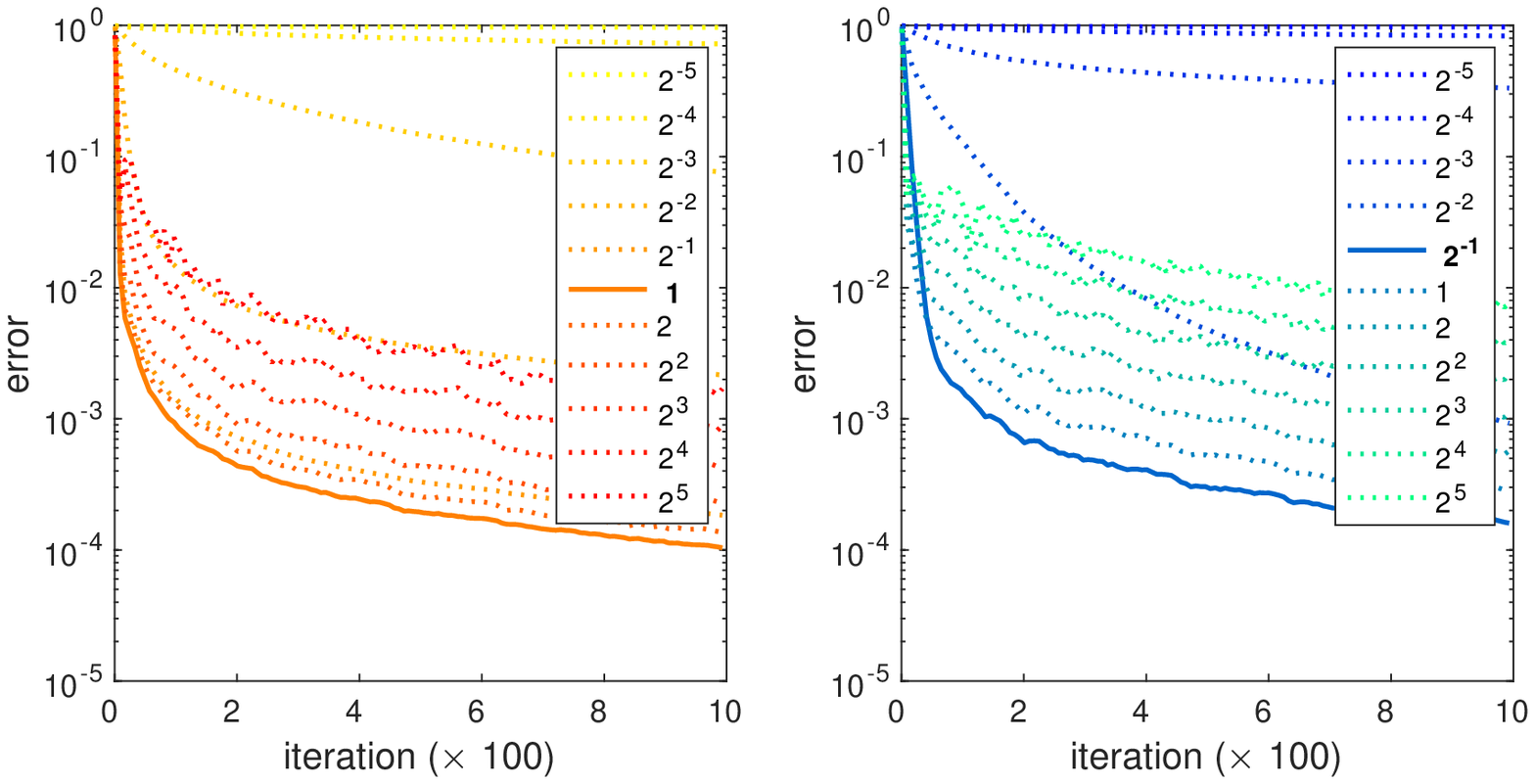}
		\end{minipage}
	}
	\newline
	\subfigure[$h=1$,~$p=30$]{
		\begin{minipage}[t]{0.47\linewidth}
			\includegraphics[width=6cm,trim=0.75cm 3cm 1.8cm 2.5cm,clip]{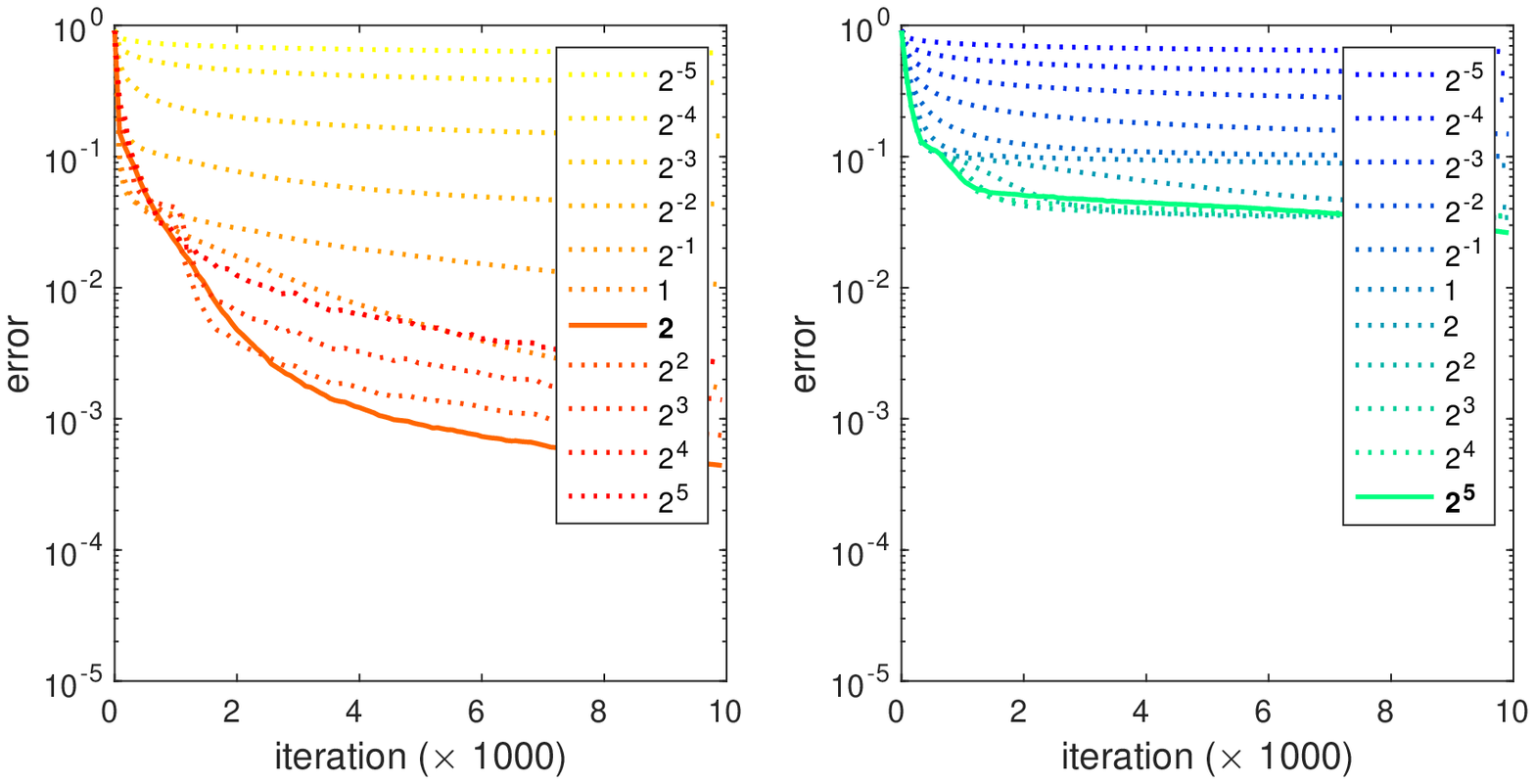}
		\end{minipage}
	}
	\subfigure[$h=10$,~$p=30$]{
		\begin{minipage}[t]{0.47\linewidth}
			\includegraphics[width=6cm,trim=0.75cm 3cm 1.8cm 2.5cm,clip]{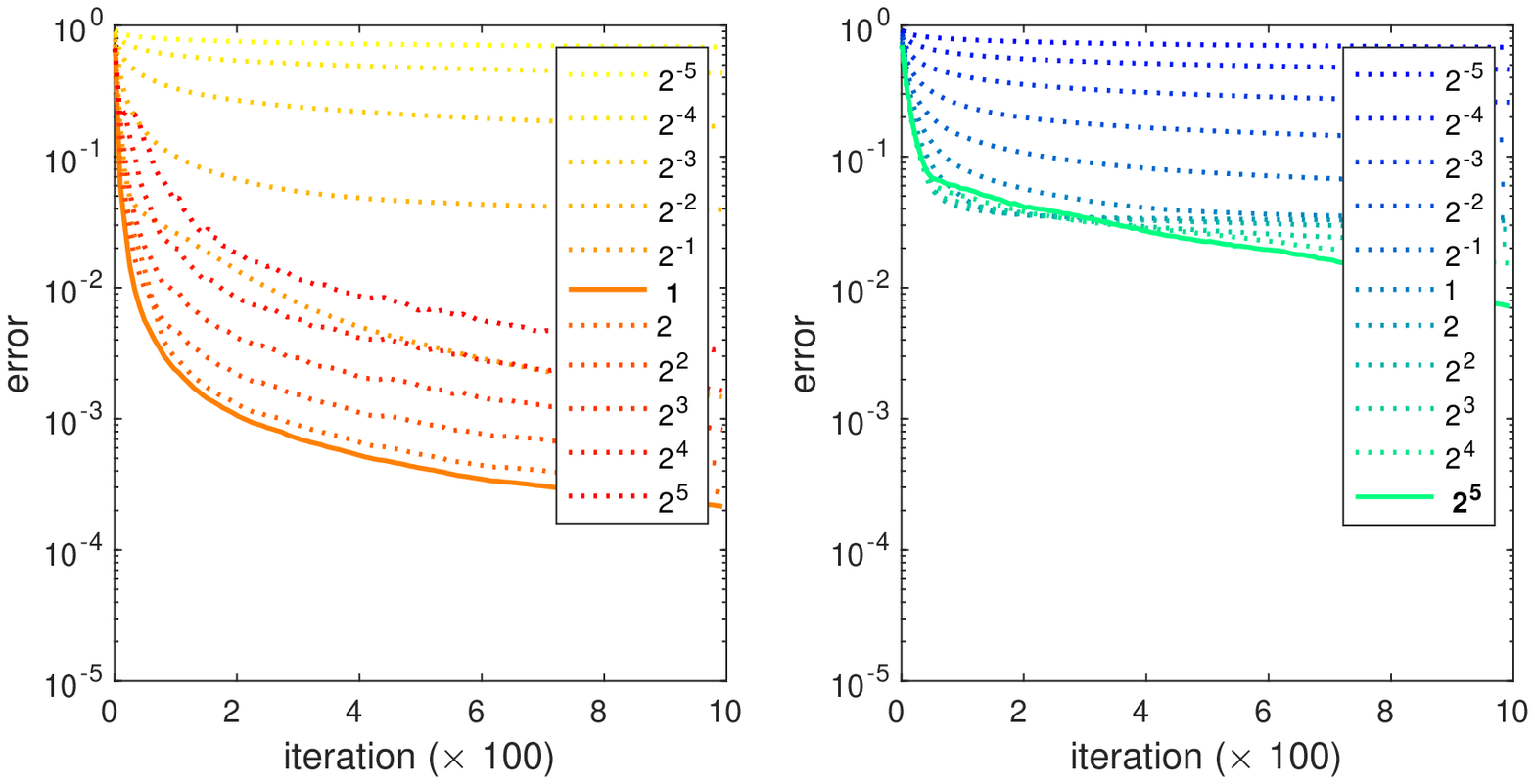}
		\end{minipage}
	}	
	\caption{The estimation errors of Oja's iteration (right in each subfigure) and SGN (left in each subfigure) with diminishing stepsizes when varying $\gamma$ from $2^{-5}$ to $2^5$ on \texttt{Gau-gap-1} with $\bar{\mu} = 10$.}
	\label{fig:T1gau1}
\end{figure}

\begin{figure}[h]
	\centering
	\subfigure[$p_{1} = 0~~~~~$]{
		\begin{minipage}[t]{0.47\linewidth}
			\includegraphics[width=6cm,trim=0.75cm 3cm 1.8cm 2.5cm,clip]{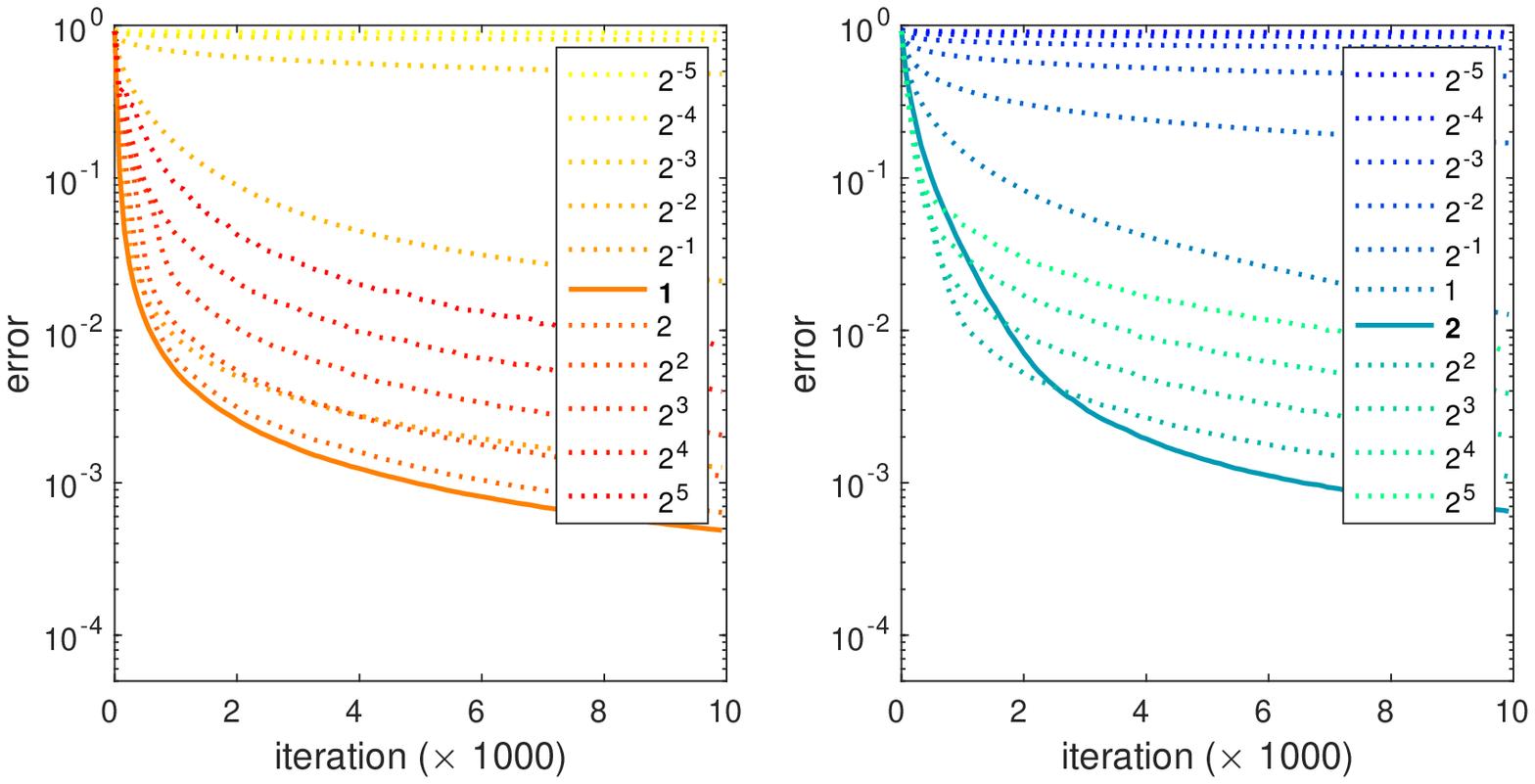}
		\end{minipage}
	}
	\subfigure[$p_{1} = 5~~~~~$]{
		\begin{minipage}[t]{0.47\linewidth}
			\includegraphics[width=6cm,trim=0.75cm 3cm 1.8cm 2.5cm,clip]{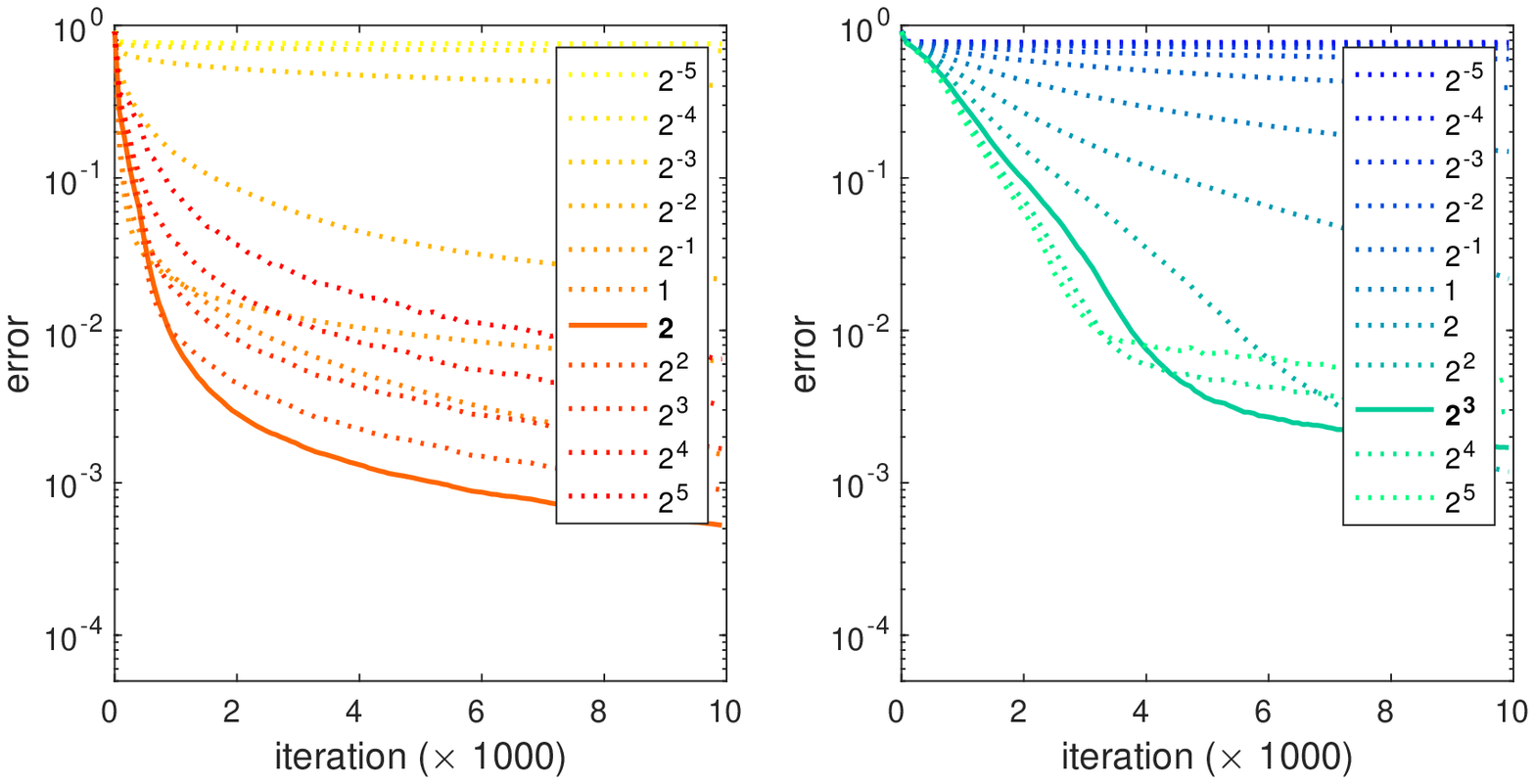}
		\end{minipage}
	}
	\newline
	\subfigure[$p_{1} = 15~~~~~$]{
		\begin{minipage}[t]{0.47\linewidth}
			\includegraphics[width=6cm,trim=0.75cm 3cm 1.8cm 2.5cm,clip]{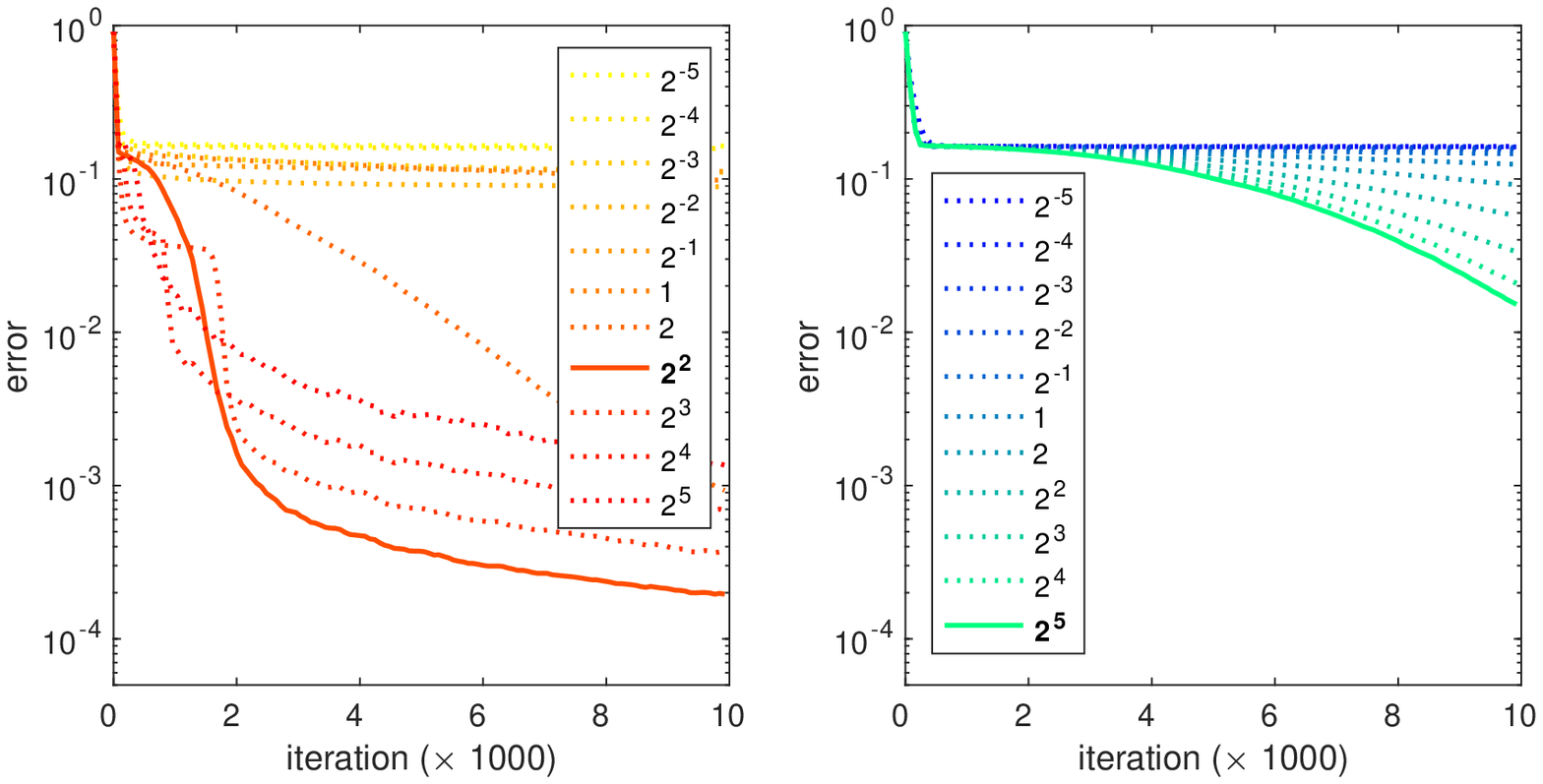}
		\end{minipage}
	}
	\subfigure[$p_{1} = 25~~~~~$]{
		\begin{minipage}[t]{0.47\linewidth}
			\includegraphics[width=6cm,trim=0.75cm 3cm 1.8cm 2.5cm,clip]{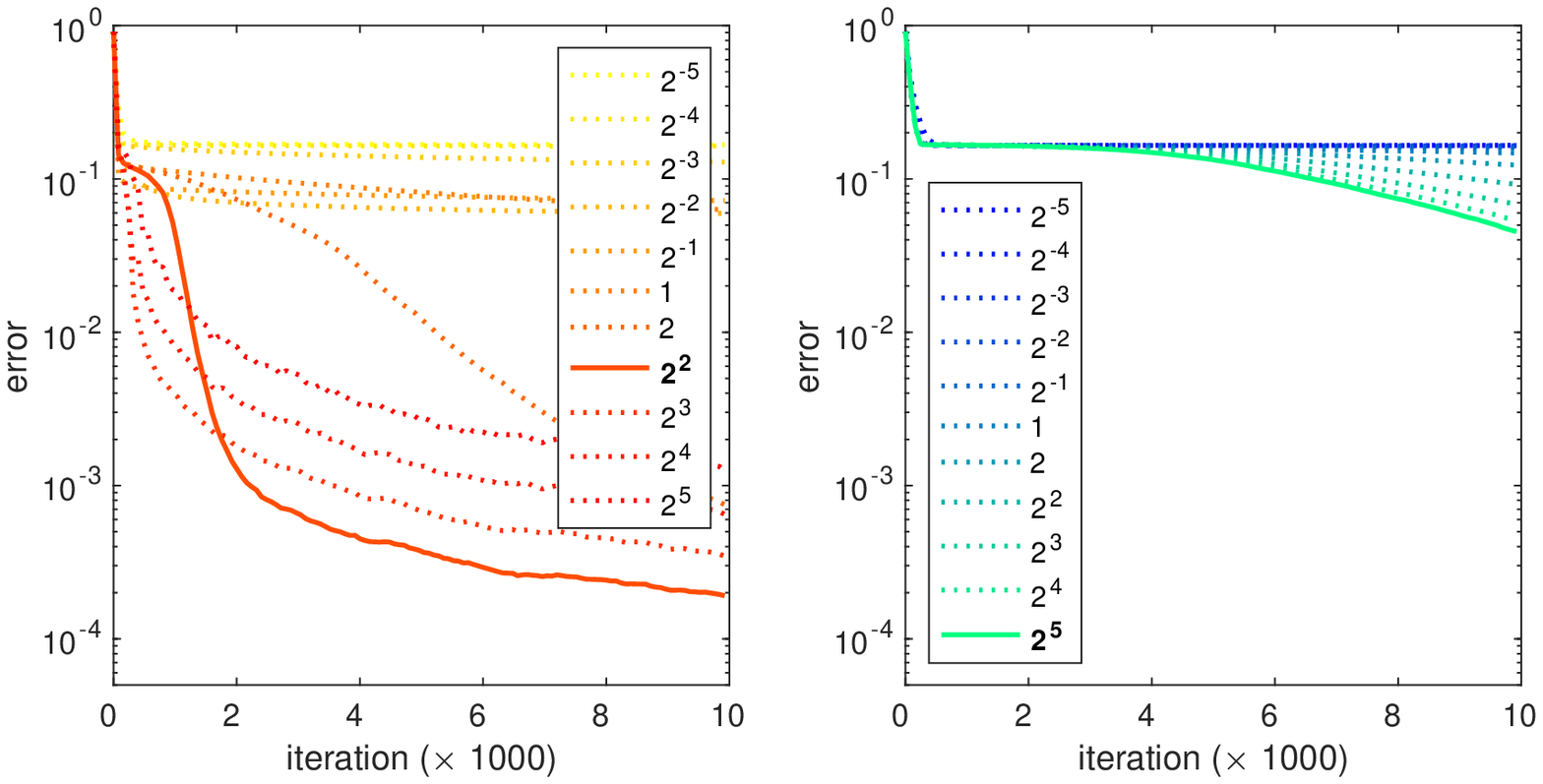}
		\end{minipage}
	}	
	\caption{The estimation errors of Oja's iteration (right in each subfigure) and SGN (left in each subfigure) with diminishing stepsizes when varying $\gamma$ from $2^{-5}$ to $2^5$ on \texttt{Gau-gap-2} with $h=1$.}
	\label{fig:T1gau2}
	\centering
	\subfigure[$h=1$,~$p=1~~~~~$]{
		\begin{minipage}[t]{0.47\linewidth}
			\includegraphics[width=5.9cm,trim=0.75cm 3cm 1.8cm 2.5cm,clip]{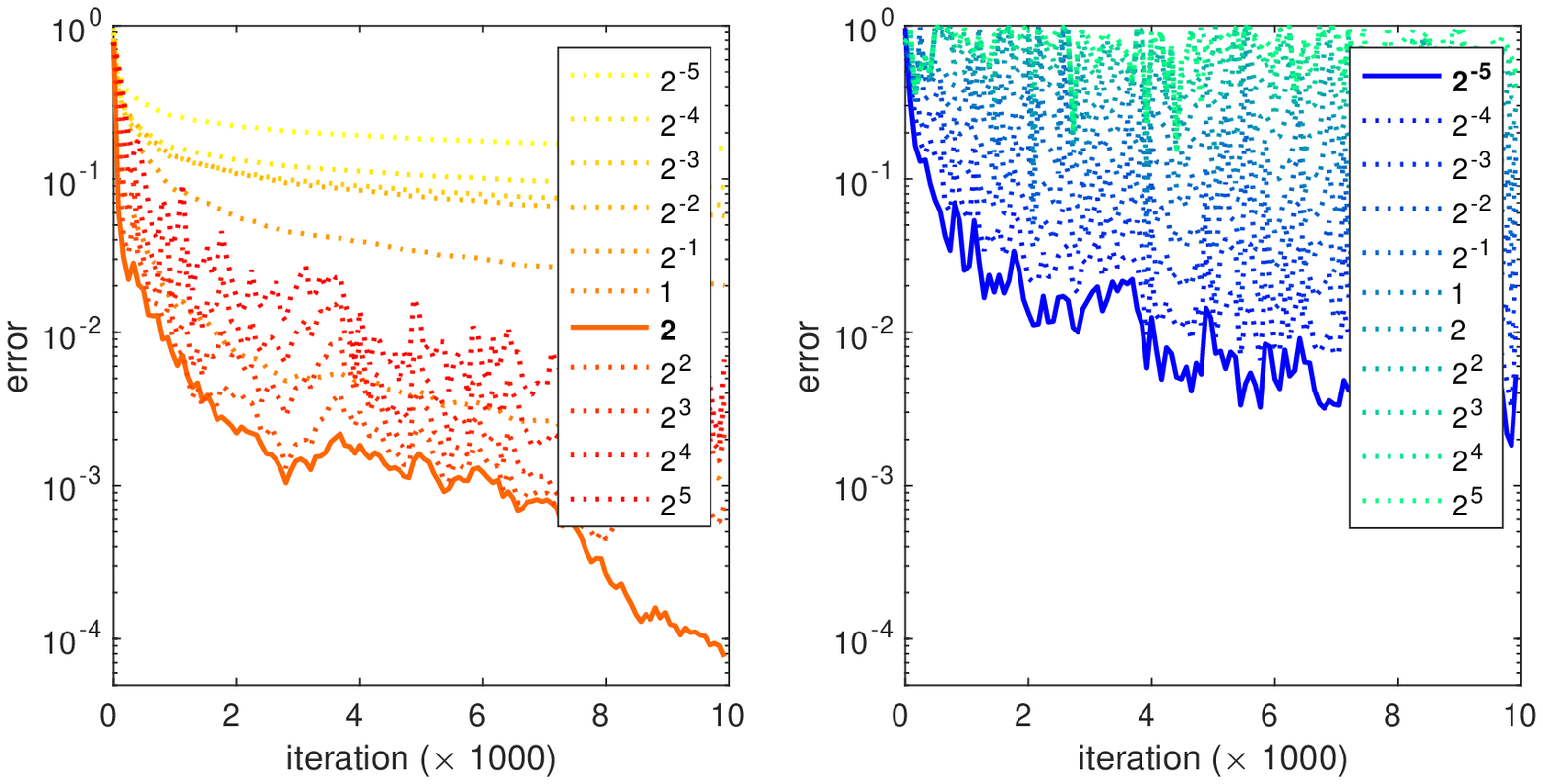}
		\end{minipage}
	}
	\subfigure[$h=10$,~$p=1~~~~~$]{
		\begin{minipage}[t]{0.47\linewidth}
			\includegraphics[width=5.9cm,trim=0.75cm 3cm 1.8cm 2.5cm,clip]{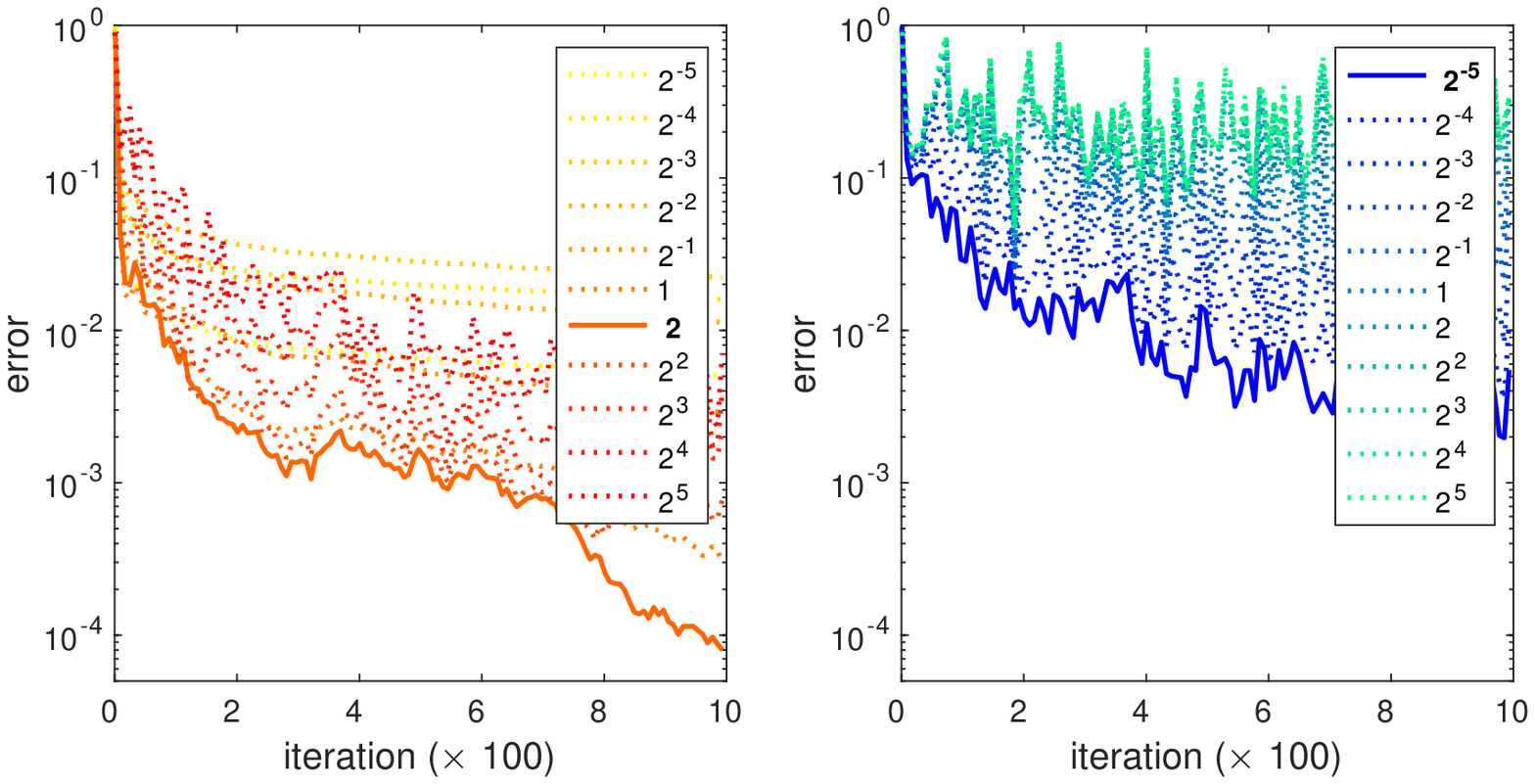}
		\end{minipage}
	}
	\newline
	\subfigure[$h=1$,~$p=30~~~~~$]{
		\begin{minipage}[t]{0.47\linewidth}
			\includegraphics[width=5.9cm,trim=0.75cm 3cm 1.8cm 2.5cm,clip]{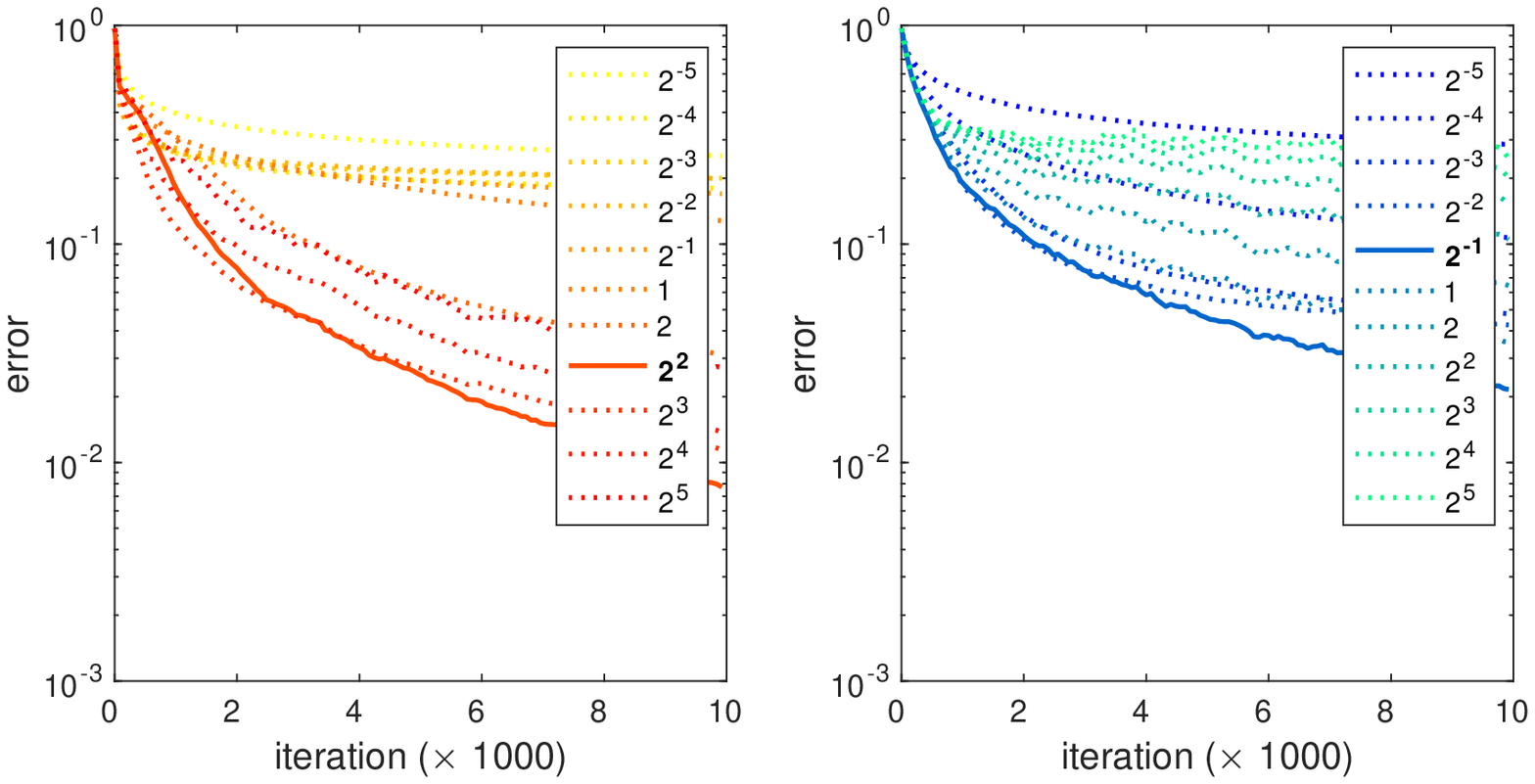}
		\end{minipage}
	}
	\subfigure[$h=10$,~$p=30~~~~~$]{
		\begin{minipage}[t]{0.47\linewidth}
			\includegraphics[width=5.9cm,trim=0.75cm 3cm 1.8cm 2.5cm,clip]{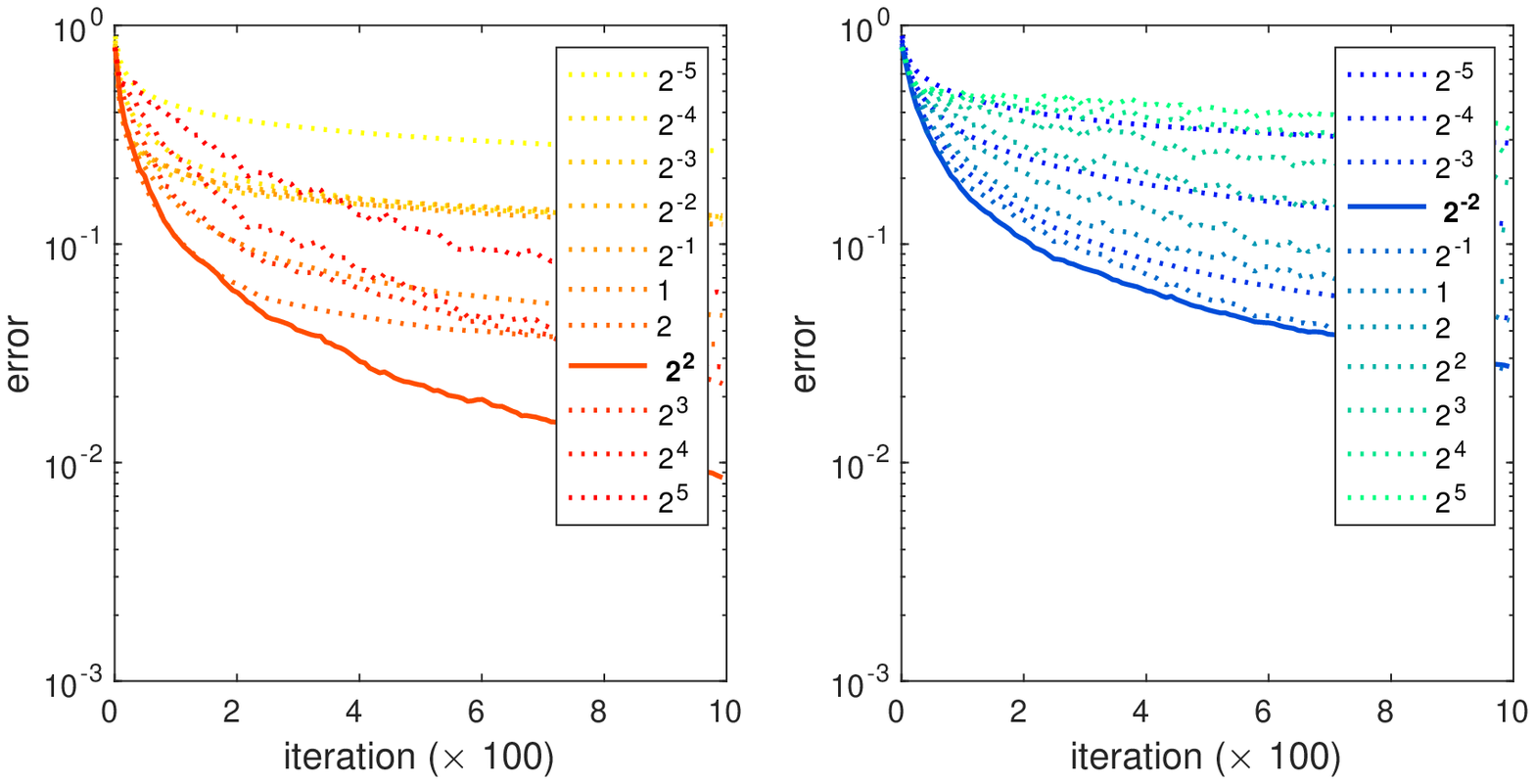}
		\end{minipage}
	}
	\caption{The estimation errors of Oja's iteration (right in each subfigure) and SGN (left in each subfigure) with diminishing stepsizes when varying $\gamma$ from $2^{-5}$ to $2^5$ on \texttt{CIFAR-$10$}.}
	\label{fig:T1cifar}
\end{figure}

\begin{figure}[h]
	\centering	
	\subfigure[$\bar{\mu}=1$\quad~~]{
		\begin{minipage}[t]{0.3\linewidth}
			\includegraphics[width=4.5cm]{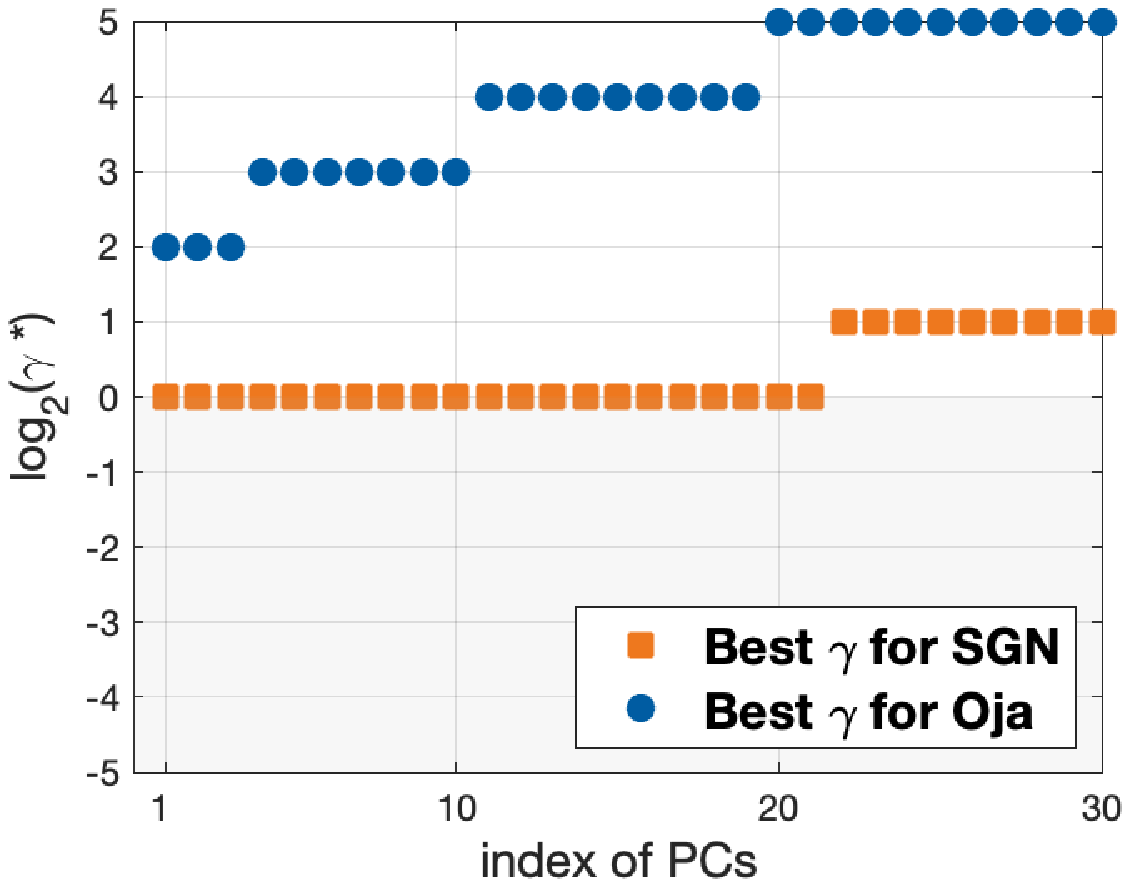}
		\end{minipage}
	}	
	\subfigure[$\bar{\mu}=10$\quad~~]{
		\begin{minipage}[t]{0.3\linewidth}
			\includegraphics[width=4.5cm]{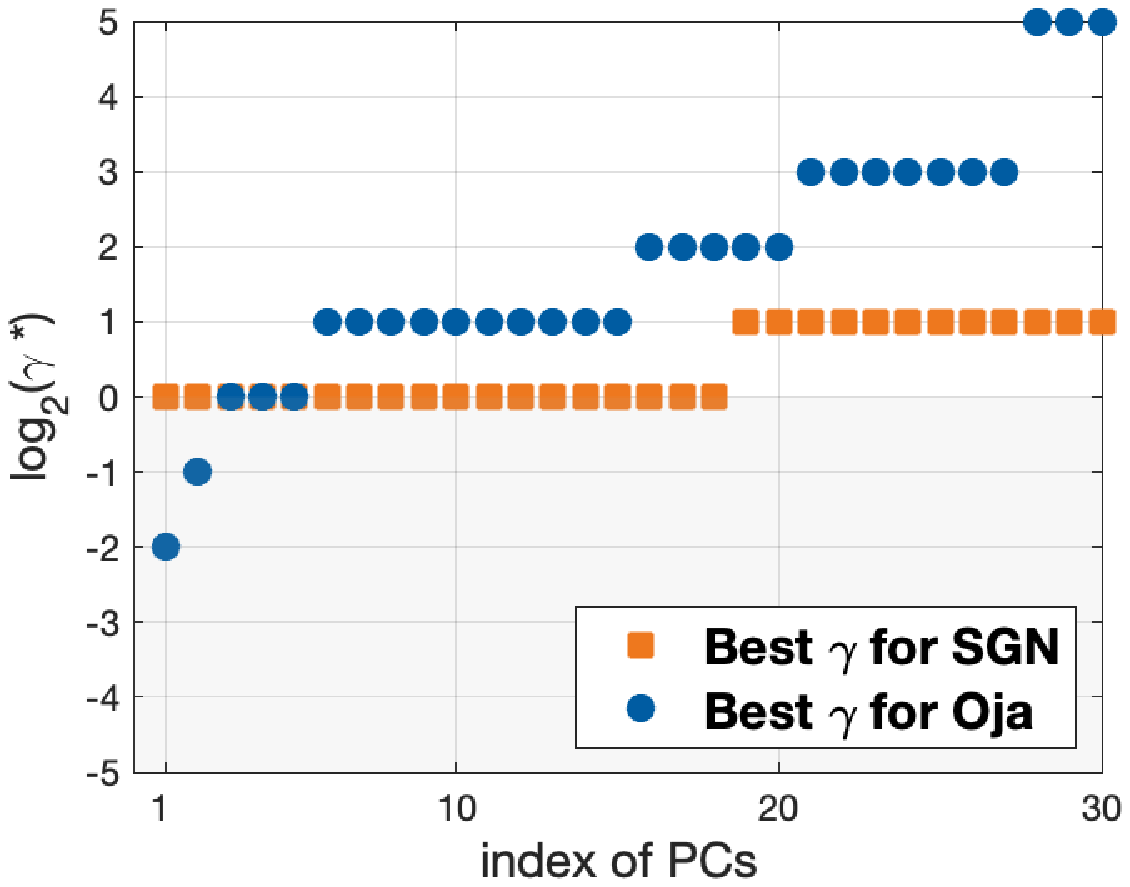}
		\end{minipage}
	}	
	\subfigure[$\bar{\mu}=100$\quad~~]{
		\begin{minipage}[t]{0.3\linewidth}
			\includegraphics[width=4.5cm]{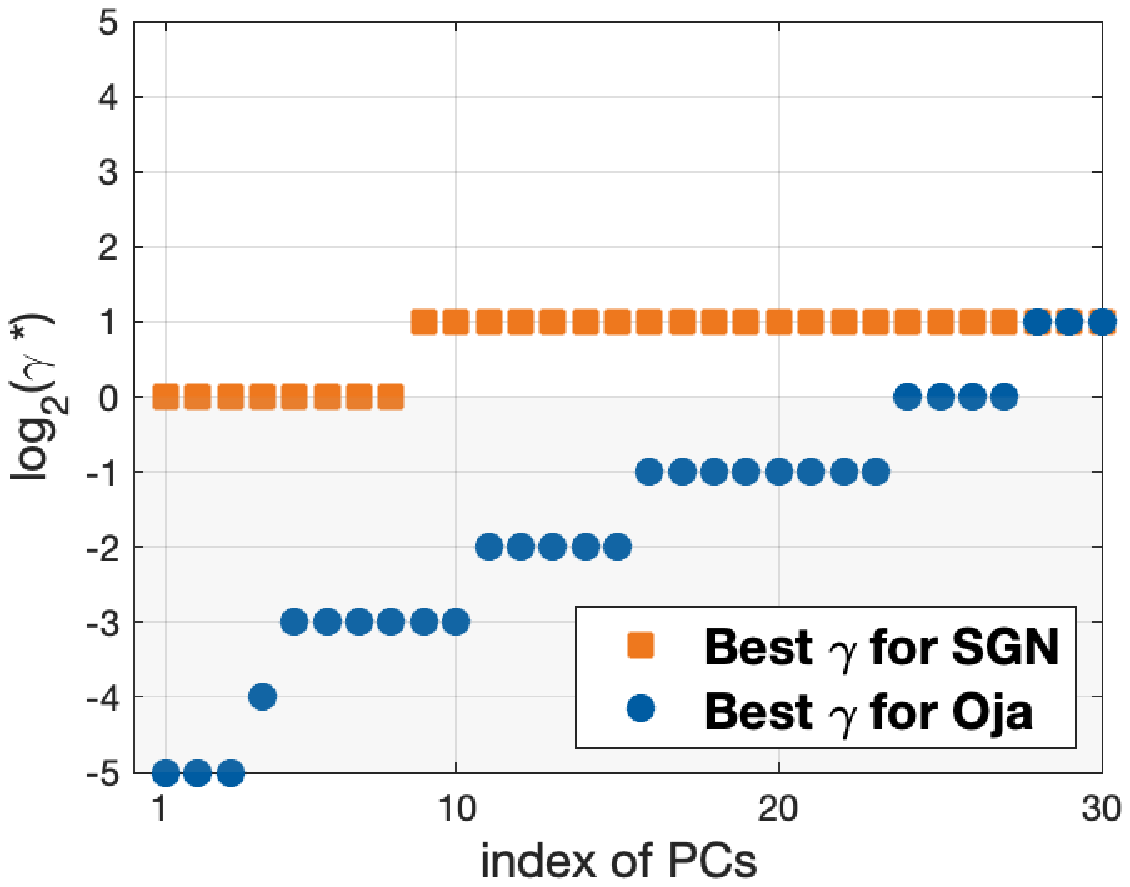}
		\end{minipage}
	}	
	\label{fig:bestsimu}
	\caption{The best $\gamma\in\{2^{-5},\cdots, 2^{5}\}$ for Oja's iteration and SGN on \texttt{Gau-gap-1} with $h=1$.}
\end{figure}

\begin{figure}[h]
	\centering
	\subfigure[\texttt{MNIST}\quad~~]{
		\begin{minipage}[t]{0.3\linewidth}
			\includegraphics[width=4.5cm]{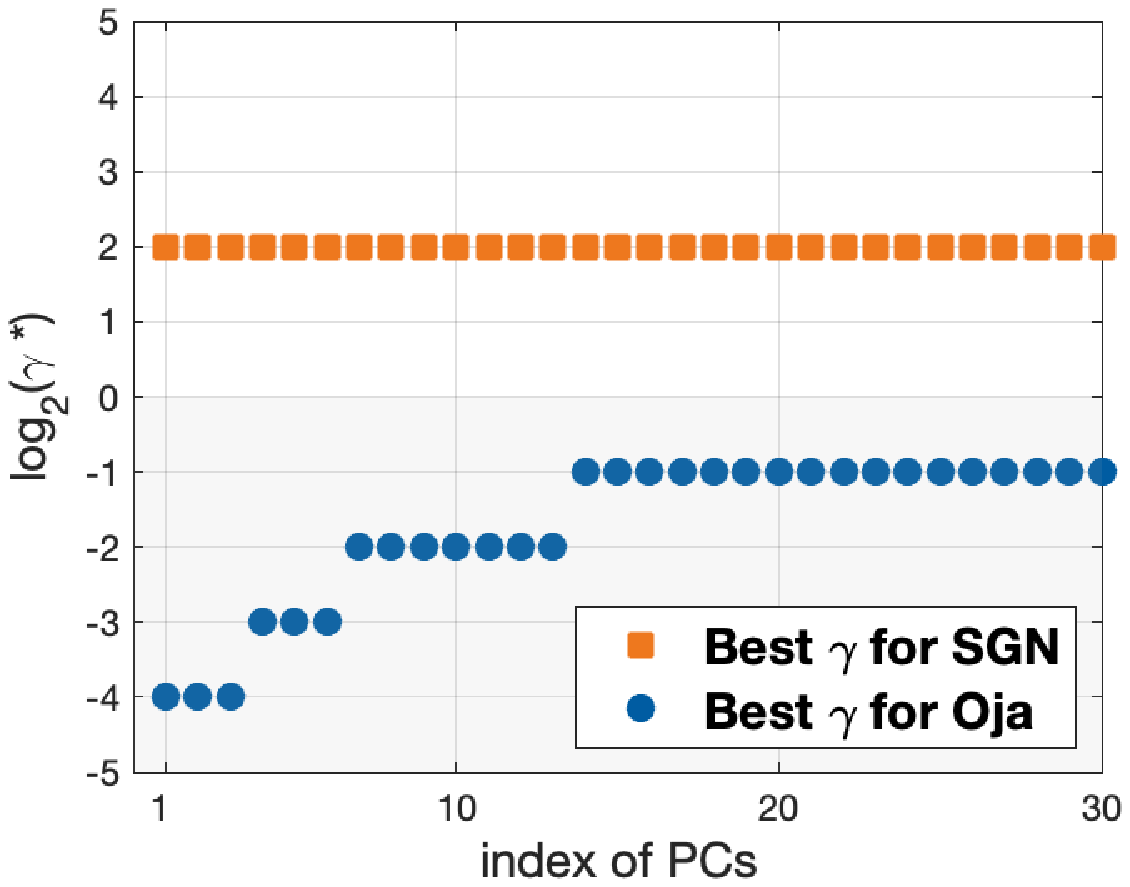}
		\end{minipage}
	}
	\subfigure[\texttt{Fashion-MNIST}\quad~~]{
		\begin{minipage}[t]{0.3\linewidth}
			\includegraphics[width=4.5cm]{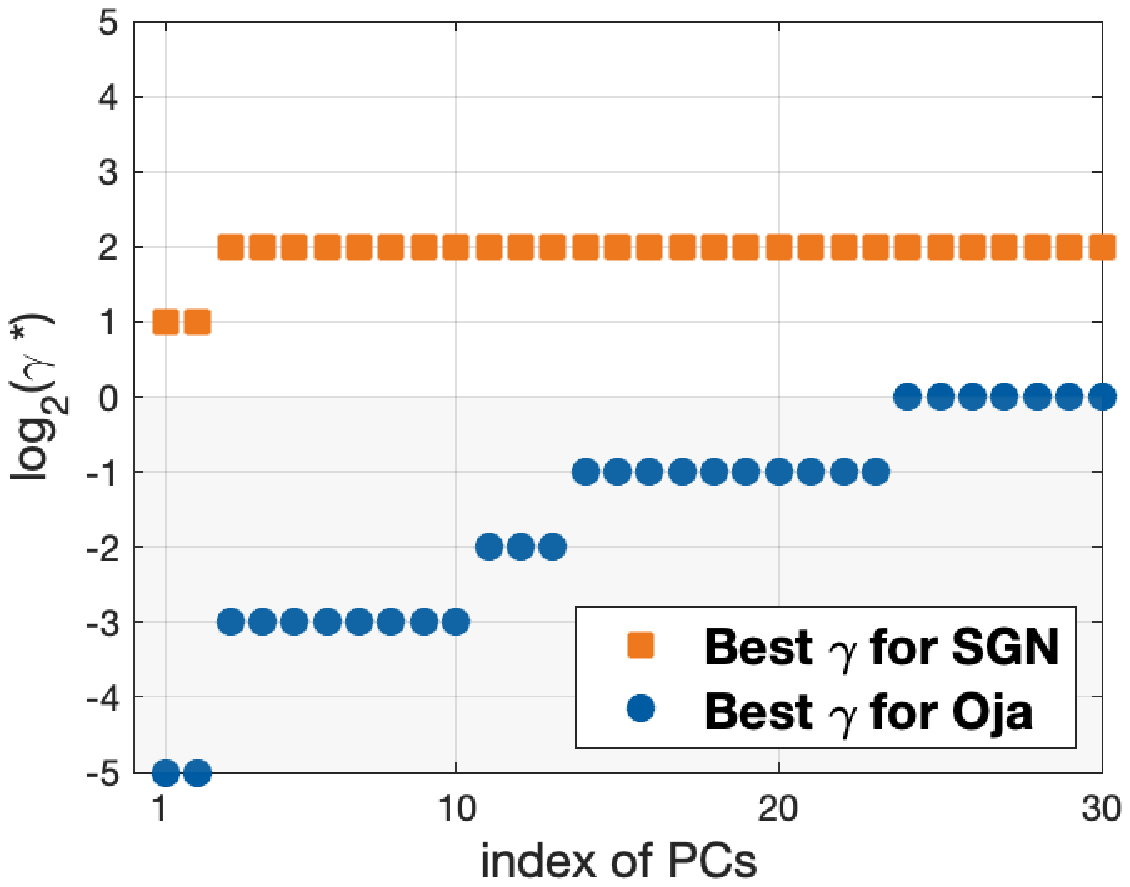}
		\end{minipage}
	}			
	\subfigure[\texttt{CIFAR-$10$}\quad~~]{
		\begin{minipage}[t]{0.3\linewidth}
			\includegraphics[width=4.5cm]{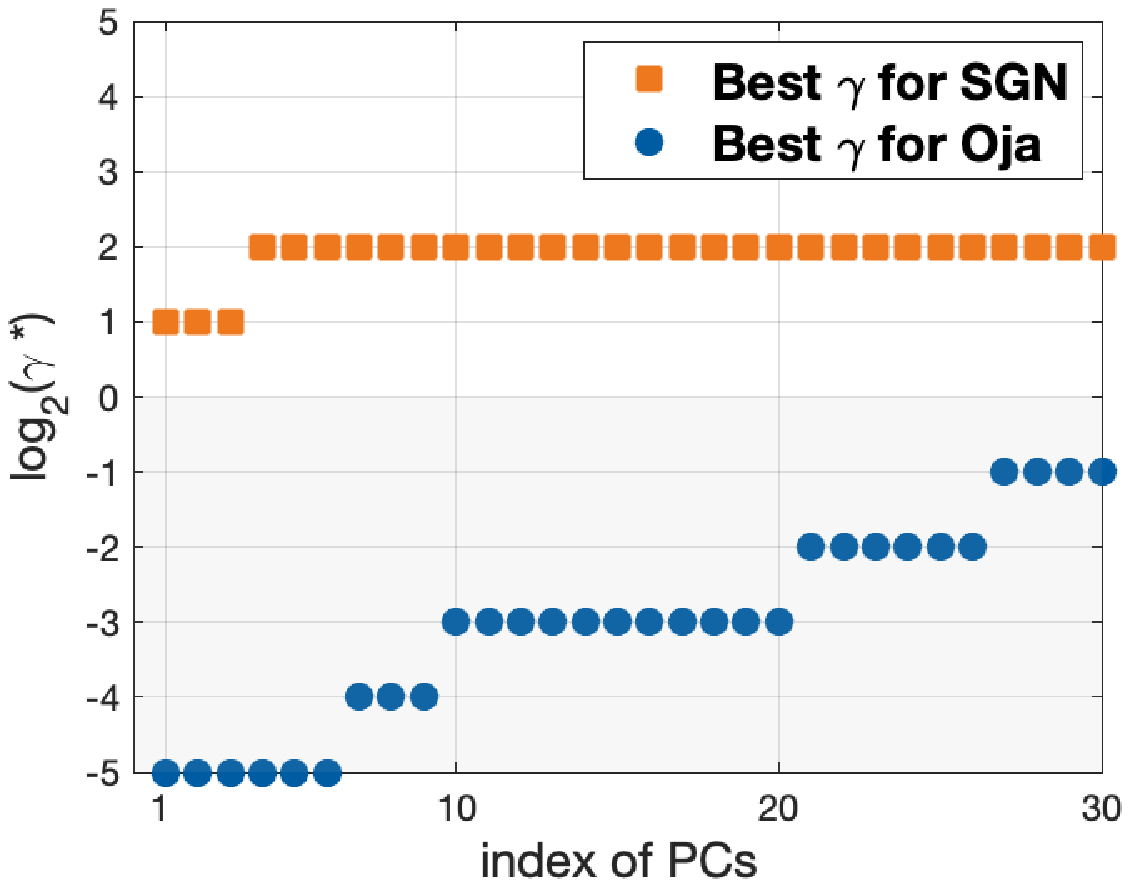}
		\end{minipage}
	}	
	\label{fig:bestreal}
	\caption{The best $\gamma\in\{2^{-5},\cdots, 2^{5}\}$ for Oja's iteration and SGN on real datasets with $h=1$.}
\end{figure}

\subsection{Comparison on adaptive stepsizes}\label{sec:numeri-adasgn}
Finally, we try to demonstrate the effectiveness of AdaSGN compared to
the diminishing-stepsize Oja's iteration and SGN with the optimally tuned $\gamma$ from $\{2^{-5}, 2^{-4},\cdots, 2^{4}, 2^{5}\}$ and AdaOja~\cref{eq:adaoja}.

In \cref{fig:adaSGN_simu} and \cref{fig:adaSGN_real}, AdaSGN shows comparable performance with manually tuned SGN while AdaOja
produces unstable results in different settings (for example, good performance in \cref{fig:adaSGN_simu}(c) but poor performance in \cref{fig:adaSGN_simu}(f)).
In addition, both SGN and AdaSGN exhibit excellent stability against the batchsize change but Oja's iteration and AdaOja suggest otherwise.
And an appropriate batchsize could make a certain improvement (for example, \cref{fig:adaSGN_simu}(d) and \cref{fig:adaSGN_simu}(e)).

\begin{figure}[h]
	\centering
	\subfigure[$h=1,~p=1$]{
		\begin{minipage}[t]{4.5cm}
			\centering
			\includegraphics[width=4.5cm]{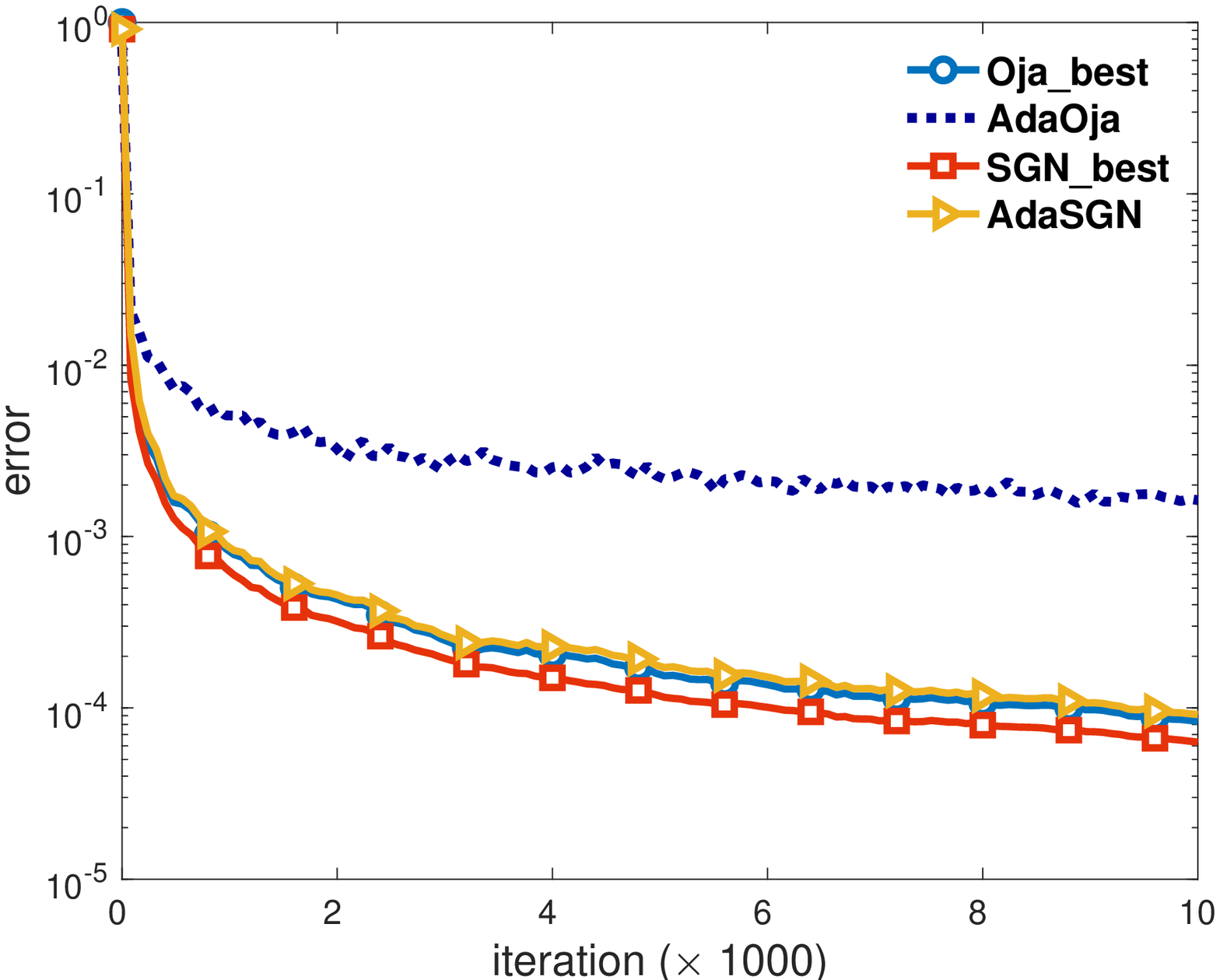}
		\end{minipage}
	}
	\subfigure[$h=10,~p=1$]{
		\begin{minipage}[t]{4.5cm}
			\centering
			\includegraphics[width=4.5cm]{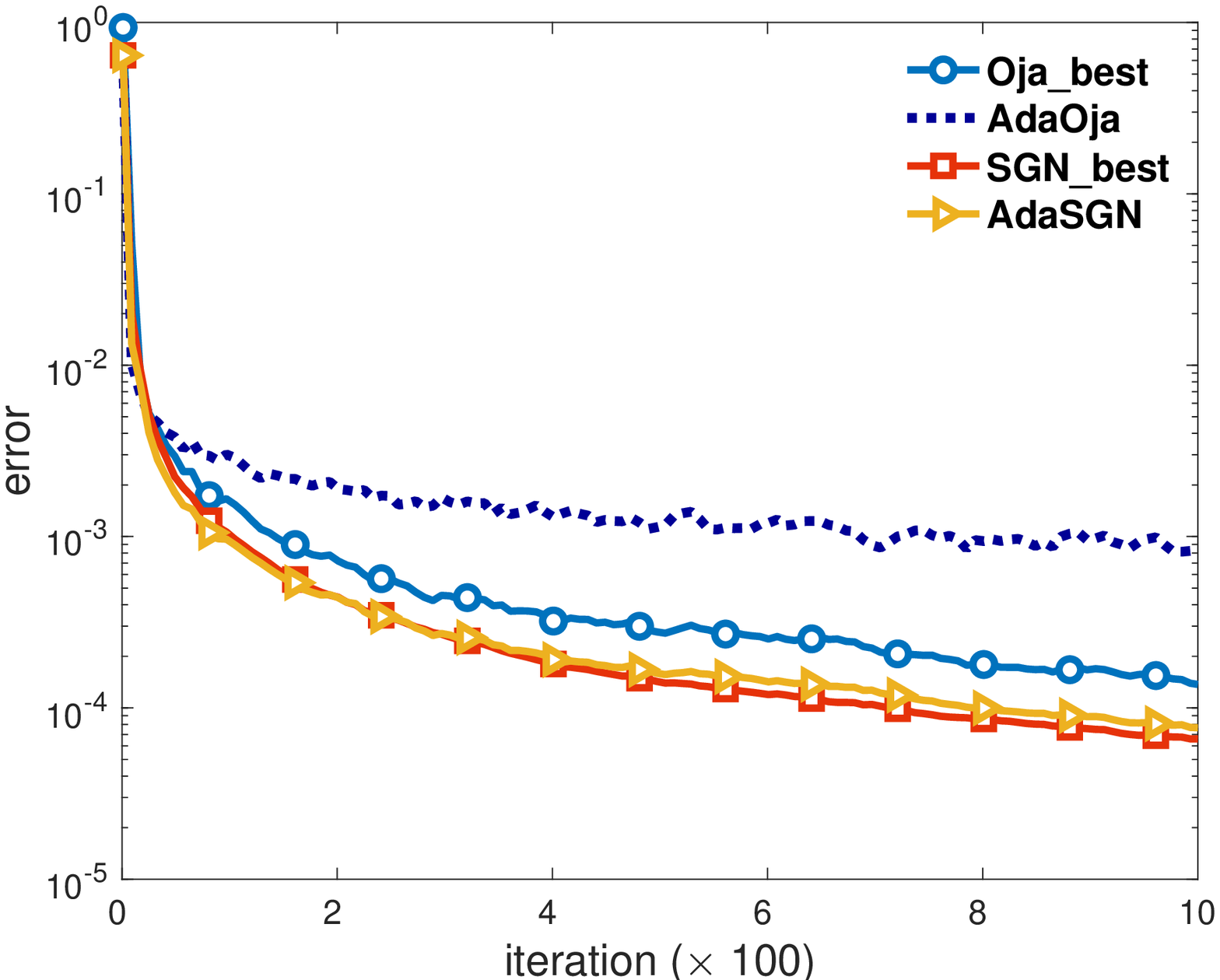}
		\end{minipage}
	}
	\subfigure[$h=100,~p=1$]{
		\begin{minipage}[t]{4.5cm}
			\centering
			\includegraphics[width=4.5cm]{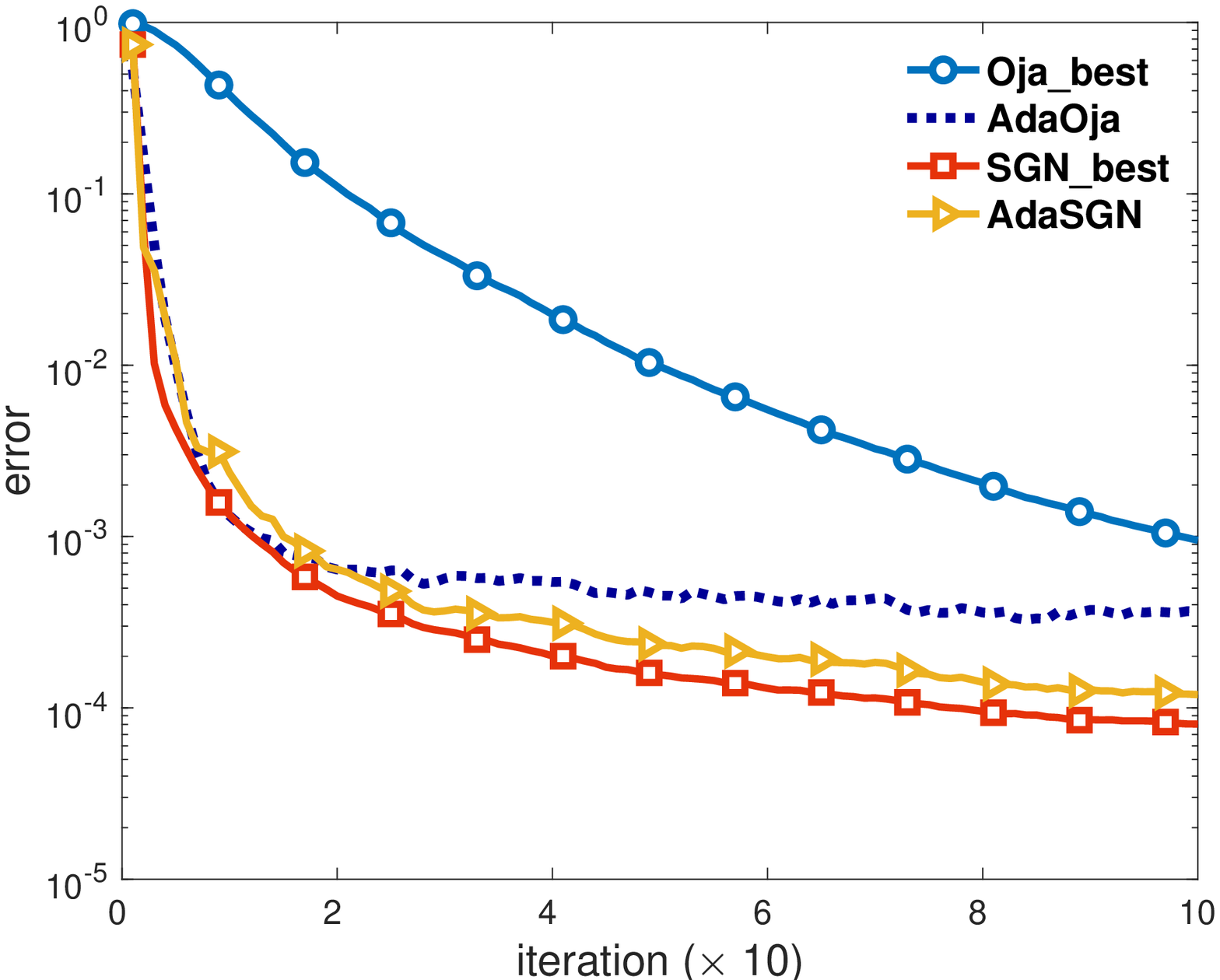}
		\end{minipage}
	}

	\subfigure[$h=1,~p=30$]{
		\begin{minipage}[t]{4.5cm}
			\centering
			\includegraphics[width=4.5cm]{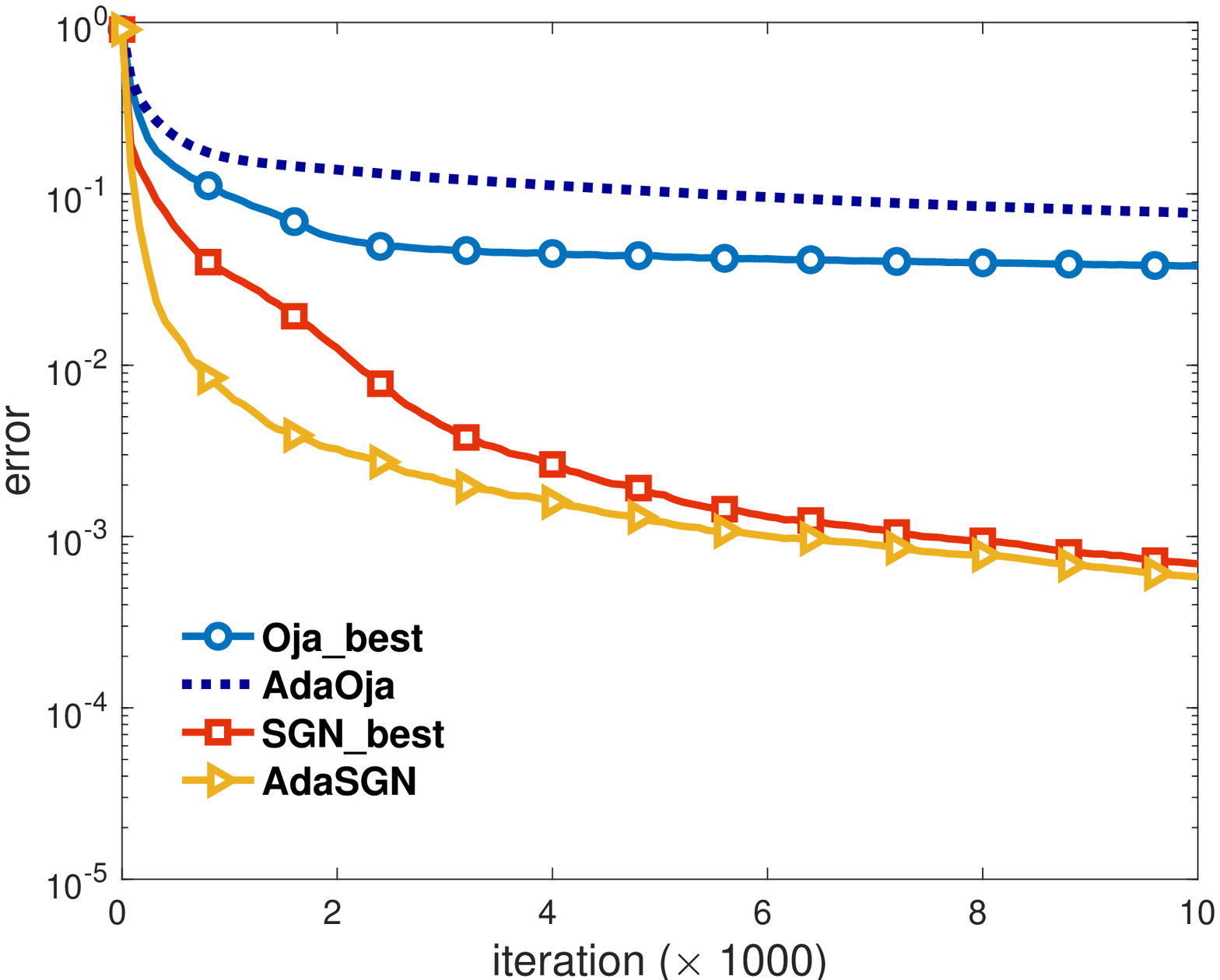}
		\end{minipage}
	}
	\subfigure[$h=10,~p=30$]{
		\begin{minipage}[t]{4.5cm}
			\centering
			\includegraphics[width=4.5cm]{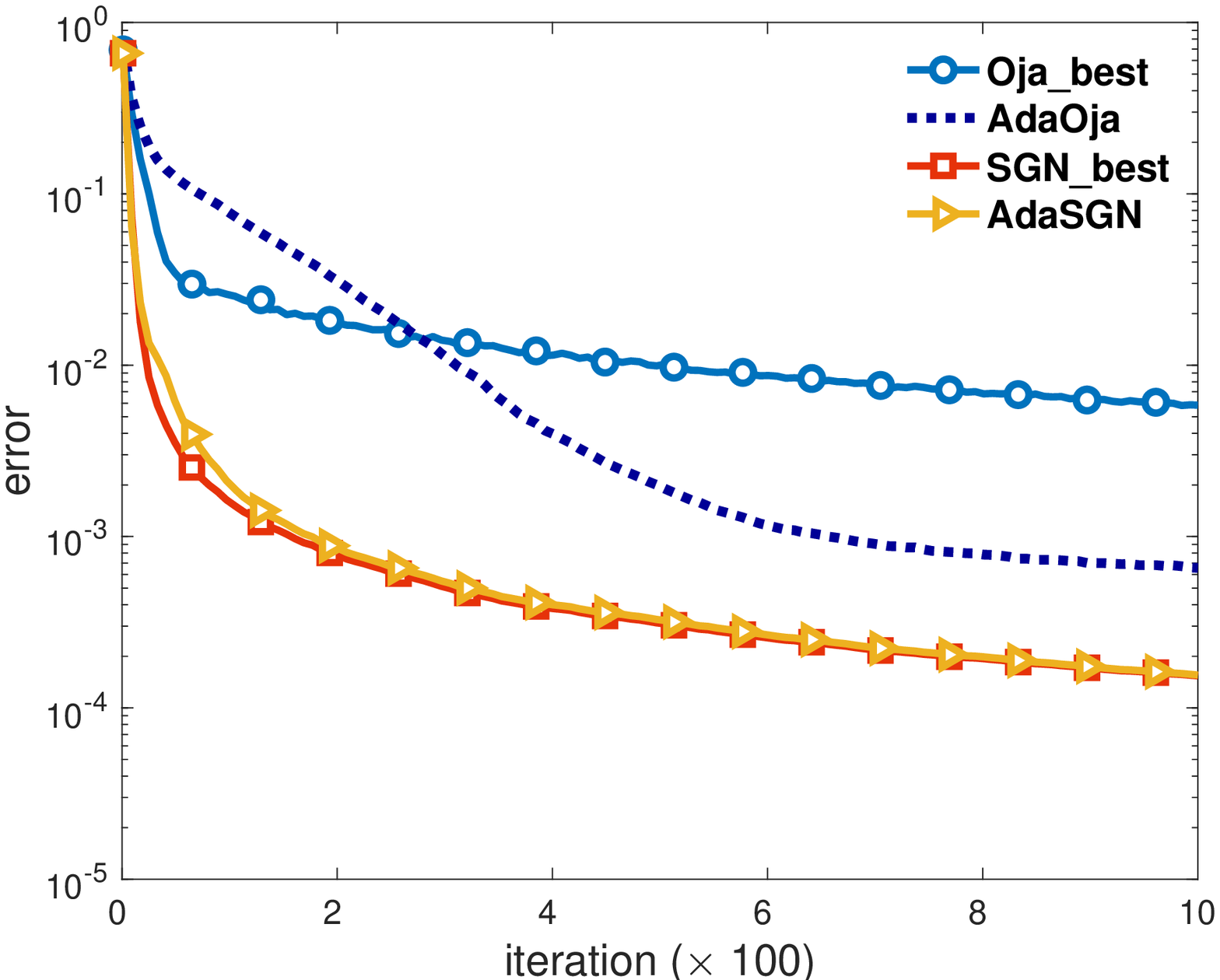}
		\end{minipage}
	}
	\subfigure[$h=100,~p=30$]{
		\begin{minipage}[t]{4.5cm}
			\centering
			\includegraphics[width=4.5cm]{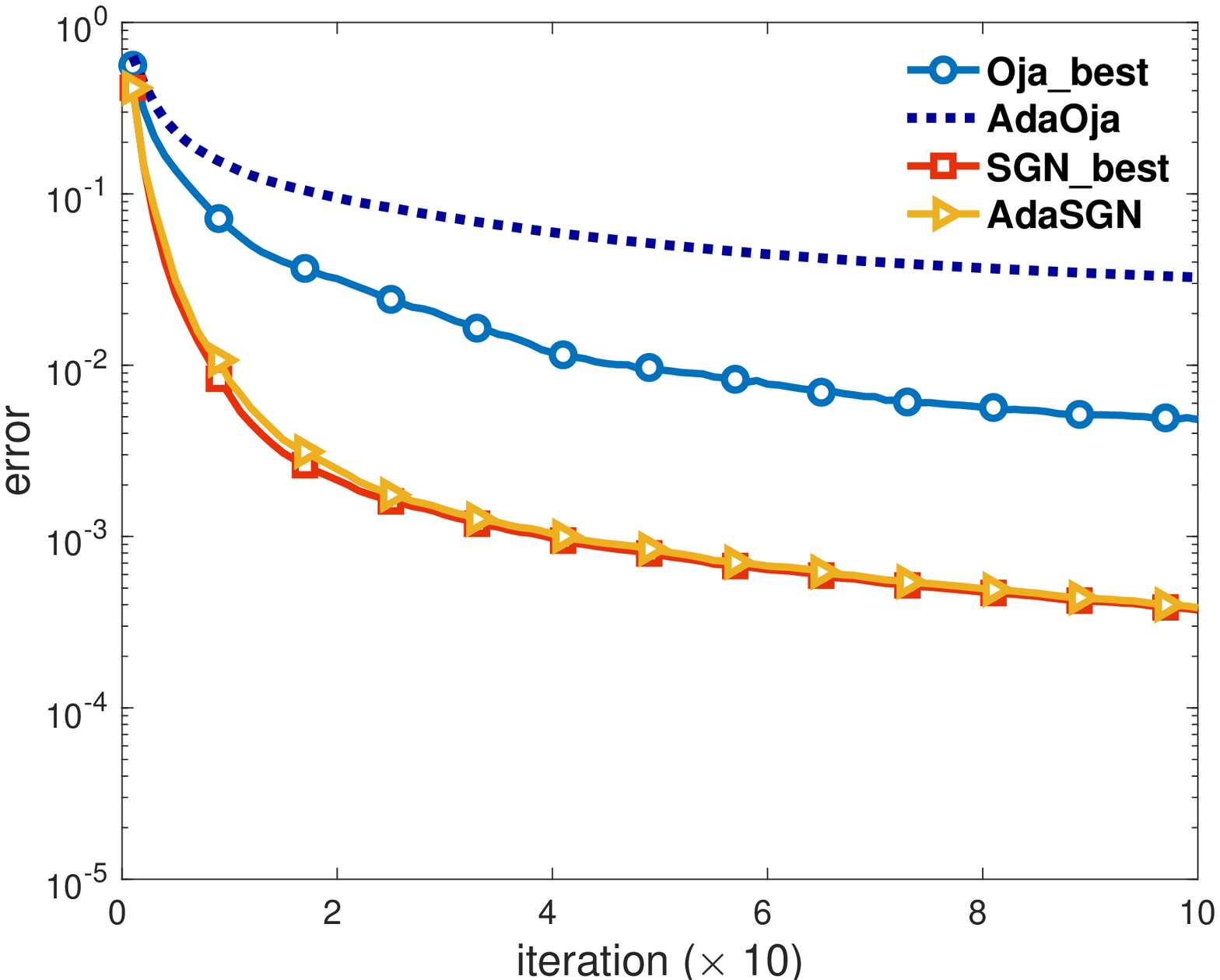}
		\end{minipage}
	}
	\caption{The estimation error of Oja's iteration and
		SGN with best-tuned diminishing stepsizes, AdaOja and AdaSGN on \texttt{Gau-gap-1} with $\bar{\mu}=10$.}
	\label{fig:adaSGN_simu}
\end{figure}

\begin{figure}[h]
	\centering
	\subfigure[$h=1,~p=1$]{
		\begin{minipage}[t]{4.5cm}
			\centering
			\includegraphics[width=4.5cm]{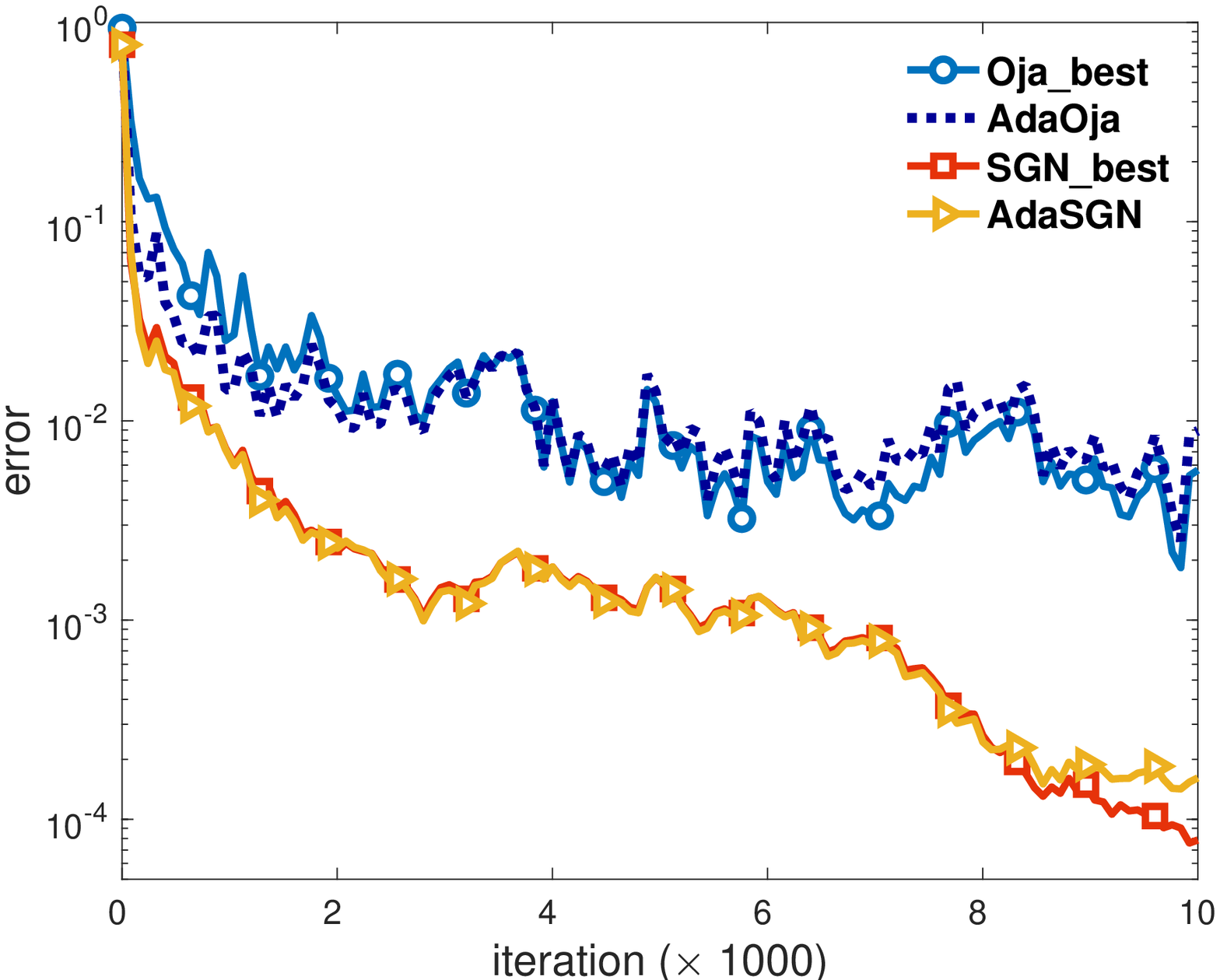}
		\end{minipage}
	}
	\subfigure[$h=10,~p=1$]{
		\begin{minipage}[t]{4.5cm}
			\centering
			\includegraphics[width=4.5cm]{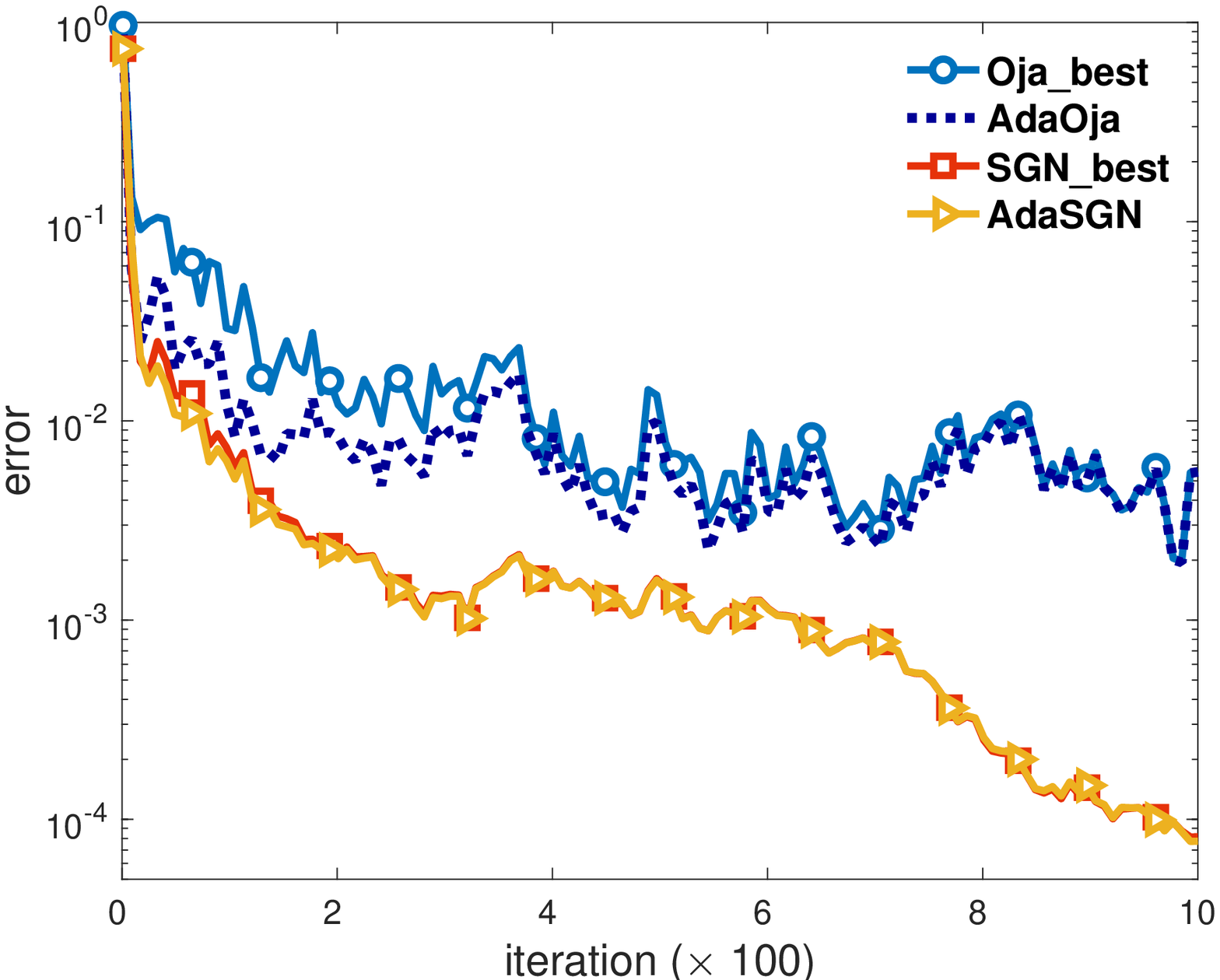}
		\end{minipage}
	}
	\subfigure[$h=100,~p=1$]{
		\begin{minipage}[t]{4.5cm}
			\centering
			\includegraphics[width=4.5cm]{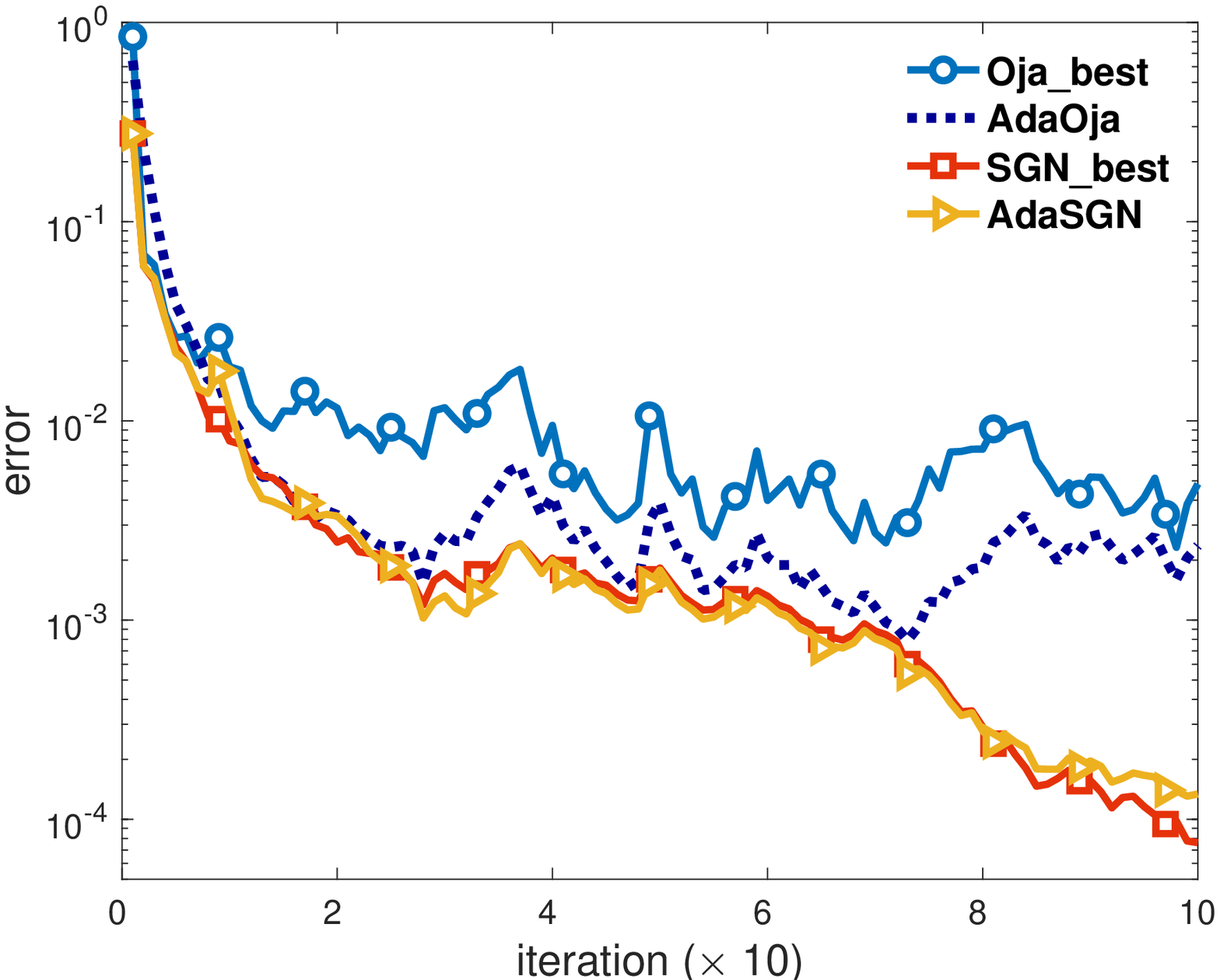}
		\end{minipage}
	}

	\subfigure[$h=1,~p=30$]{
		\begin{minipage}[t]{4.5cm}
			\centering
			\includegraphics[width=4.5cm]{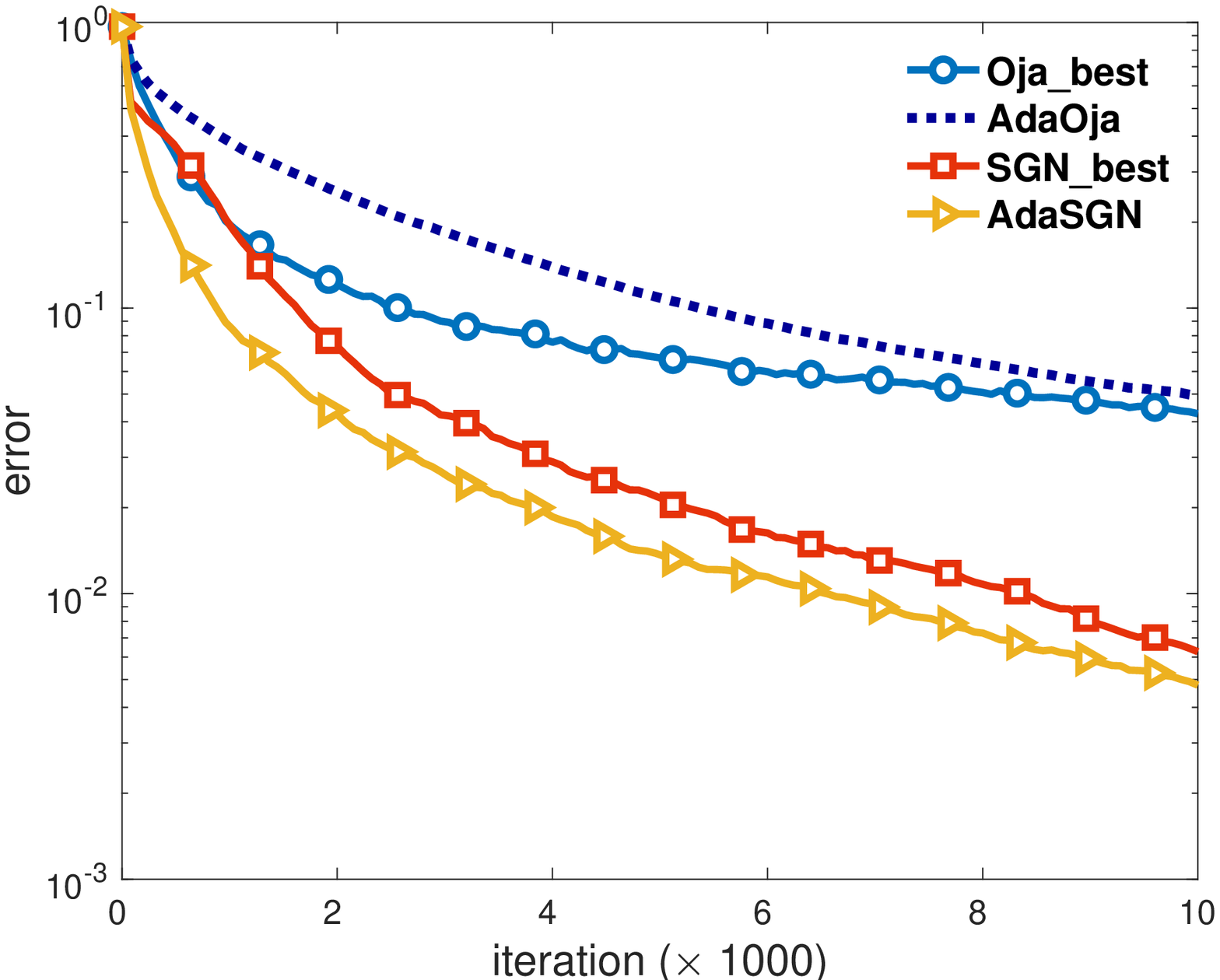}
		\end{minipage}
	}
	\subfigure[$h=10,~p=30$]{
		\begin{minipage}[t]{4.5cm}
			\centering
			\includegraphics[width=4.5cm]{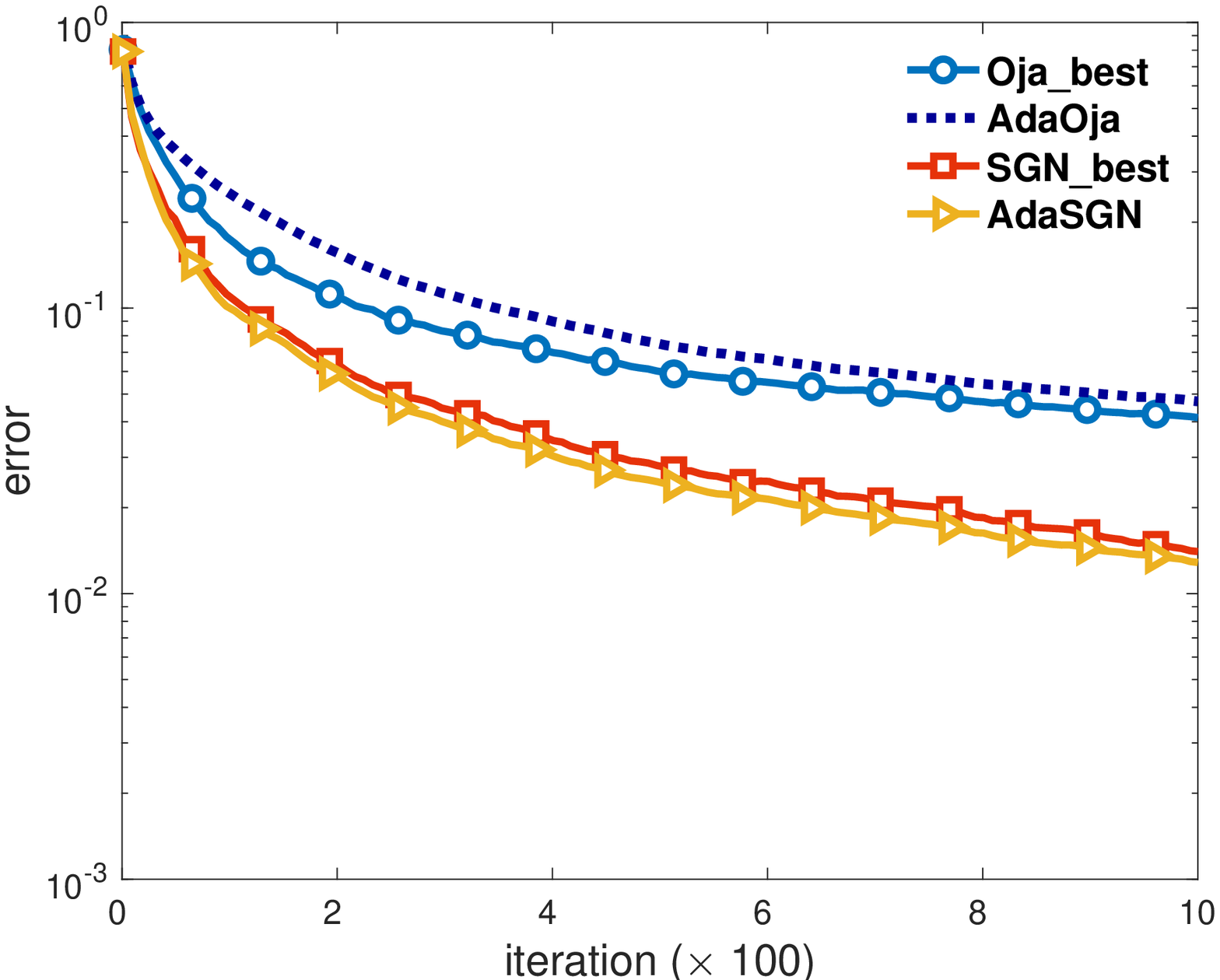}
		\end{minipage}
	}
	\subfigure[$h=100,~p=30$]{
		\begin{minipage}[t]{4.5cm}
			\centering
			\includegraphics[width=4.5cm]{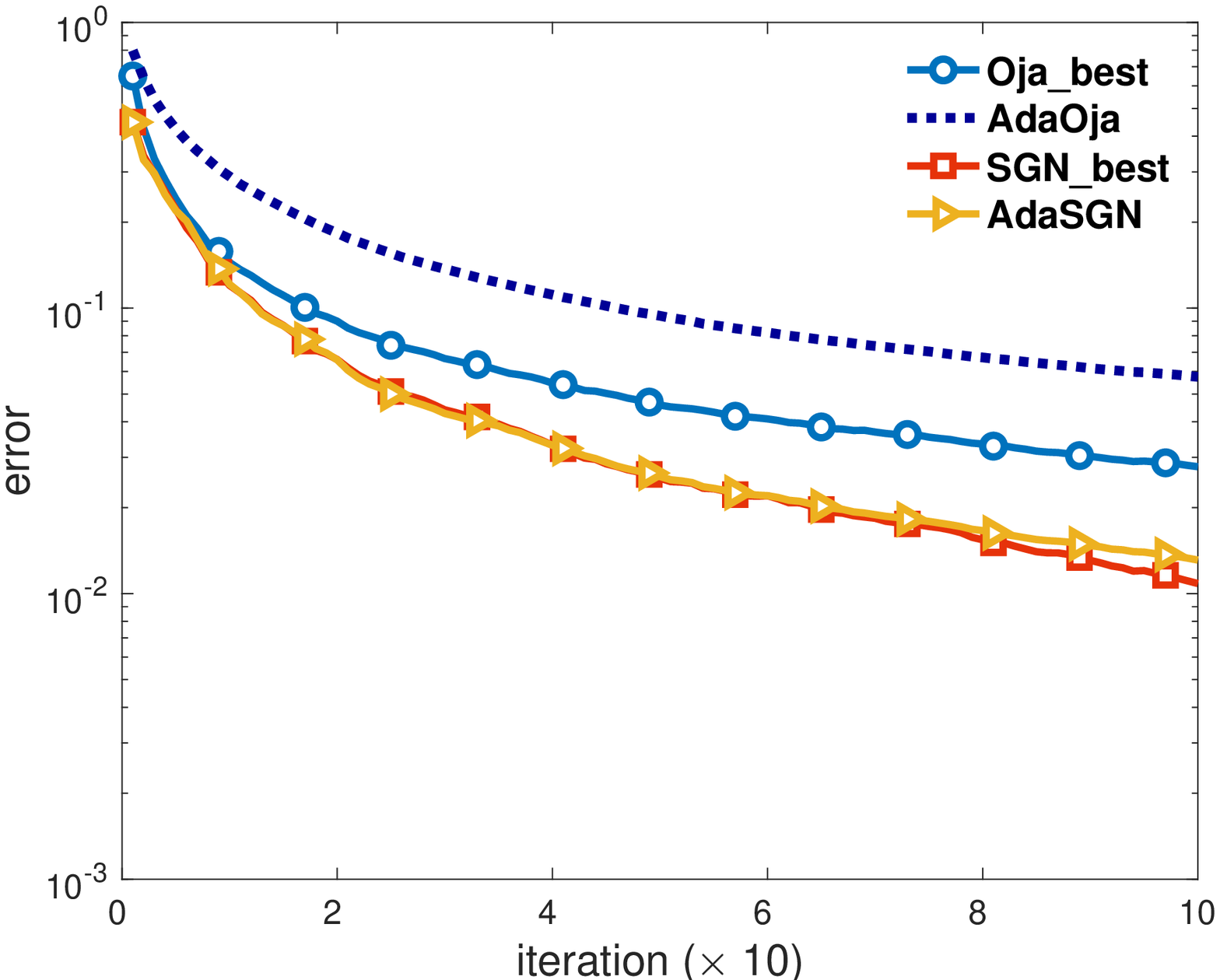}
		\end{minipage}
	}
	\caption{The estimation error of Oja's iteration and
		SGN with best-tuned diminishing stepsizes, AdaOja and AdaSGN on \texttt{CIFAR-$10$}.}
	\label{fig:adaSGN_real}
\end{figure}

\section{Concluding remarks}\label{sec:conclu}

In this paper, 
we develop an SGN method for solving the OPCA problem, which exhibits the empirical robustness with respect to the varying of the stepsize parameter and input data.
By adopting the diffusion approximation, we provide the global convergence of SGN under sub-Gaussian samples without
traditional gap assumption. 
Then, to avoid stepsize tuning which often utilizes prior information on the input data, we focus on the sample consistency and design an adaptive scheme for SGN called AdaSGN, with a basic idea of assigning smaller stepsizes for samples of lower consistency.
Numerical results indicate that AdaSGN shows a better performance than the adaptive version
of Oja method and is comparable with SGN using diminishing stepsizes of manual selection.

Several aspects of this work deserve further studies.
First, our theoretical results are established on the stepsize scheme \cref{eq:dimi} with infinitesimal $\gamma$ and $\beta\in[0, 1)$. 
In spite of the desirable empirical performance of SGN with $\beta\in[0, 1]$, it is worthwhile to extend the analysis techniques to the case of $\beta=1$ so that the gap between the theory and practice could be filled.
Then, how our proposed adaptive strategy is analyzed theoretically and whether it will work with other OPCA algorithms remain to be answered. 
Additionally, developing variants of SGN for different settings (e.g., sparsity on PCs, capability of missing data) is of particular interest as well.


\bibliographystyle{siam}
	\bibliography{ref-sgn}

\addcontentsline{toc}{section}{References}

	\appendix
\section{Boundedness of solution to ODE \cref{eq:ode}}\label{sec:bound}
\begin{lemma}\label{lemma:bound}
	The solution $X(t)$ to ODE \cref{eq:ode} is uniformly bounded.
\end{lemma}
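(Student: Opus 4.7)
The plan is to derive a scalar differential inequality for $\Vert X(t)\Vert_{\text{F}}^{2}$ and apply a Gronwall-type comparison. Let $P = X(X^\top X)^{-1}$. First, I would compute $\tfrac{d}{dt}\Vert X\Vert_{\text{F}}^{2} = 2\tr(X^\top \dot X)$ by substituting the right-hand side of \cref{eq:ode}. The key algebraic simplification is the identity $X^\top X\, P^\top = X^\top$, which collapses the cross-term: $X^\top X P^\top \Sigma P = X^\top \Sigma P$. After cancellation, the two $\Sigma$-terms combine into a single one and one obtains the clean formula
\begin{equation*}
\tfrac{d}{dt}\Vert X\Vert_{\text{F}}^{2} \;=\; \tr\!\bigl(\Sigma\, X(X^\top X)^{-1}X^\top\bigr) - \Vert X\Vert_{\text{F}}^{2}.
\end{equation*}

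Second, I would identify $\Pi_X := X(X^\top X)^{-1}X^\top$ as the orthogonal projector onto $\mathrm{range}(X)$, which has rank $p$ whenever $X$ is of full column rank. The Ky Fan trace inequality then yields $\tr(\Sigma \Pi_X) \le \sum_{i=1}^{p}\lambda_i$, so that
\begin{equation*}
\tfrac{d}{dt}\Vert X\Vert_{\text{F}}^{2} \;\le\; \textstyle\sum_{i=1}^{p}\lambda_i \;-\; \Vert X\Vert_{\text{F}}^{2}.
\end{equation*}
Integrating this linear first-order inequality gives
\begin{equation*}
\Vert X(t)\Vert_{\text{F}}^{2} \;\le\; e^{-t}\,\Vert X(0)\Vert_{\text{F}}^{2} + (1-e^{-t})\textstyle\sum_{i=1}^{p}\lambda_i \;\le\; \max\bigl\{\Vert X(0)\Vert_{\text{F}}^{2},\; \textstyle\sum_{i=1}^{p}\lambda_i\bigr\},
\end{equation*}
which is the desired uniform bound.

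The only subtlety is that \cref{eq:ode} contains $(X^\top X)^{-1}$, so the derivation above implicitly presumes $X(t)$ stays of full column rank along the trajectory. I expect this to be the mildly technical point rather than the main obstacle: a parallel Rayleigh-quotient calculation on $M(t) = X(t)^\top X(t)$ shows that an eigenvector $v$ of $M$ with eigenvalue $\mu = \lambda_{\min}(M)$ satisfies $v^\top \dot M v \ge \lambda_n - \mu$, so $\lambda_{\min}(M(t)) \ge \min\{\lambda_{\min}(M(0)), \lambda_n\}$ as long as the iterate remains in the domain of the ODE; together with the assumption that the randomly chosen $X^{(0)}$ is of full rank, this justifies that the ODE flow is defined for all $t\ge 0$ and the bound above holds globally.
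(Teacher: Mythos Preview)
Your proof is correct and follows essentially the same route as the paper: both compute $\tfrac{d}{dt}\tr(X^\top X)=\tr\bigl(X^\top\Sigma X(X^\top X)^{-1}\bigr)-\tr(X^\top X)$ and then bound the first term by a constant before integrating the resulting linear scalar inequality. Your Ky Fan bound $\sum_{i=1}^{p}\lambda_i$ is marginally sharper than the paper's $p\lambda_1$, and your added argument that $\lambda_{\min}(X^\top X)$ stays bounded away from zero---which the paper's proof simply takes for granted---fills a small gap rather than departing from the method.
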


\begin{proof}[Proof]
	Denote $L=\tr(X^\top X)$ and $M=\tr(X^\top\Sigma X(X^\top X)^{-1})$.
	\begin{align}
	\notag\frac{dL}{dt}&=2\tr\left(X^\top\frac{dX}{dt}\right)\\
	\label{eq:dL}&=\tr\left(2X^\top \Sigma X(X^\top X)^{-1}-X^\top X-X^\top \Sigma X(X^\top X)^{-1}\right)=M-L.
	\end{align}
	The explicit solution of \cref{eq:dL} is given by
	\begin{align*}
	L(t)=\exp\{-t\}\left(\int M\exp\{t\}ds+C(L^0)\right),
	\end{align*}
	where $C(L^0)$ is some constant depending only on the initial value $L(0)=L^0=\tr(X^{0\top}X^{0})$.
	Since
	\begin{align*}
	\lambda_{n}I_{p}\preccurlyeq (X^\top X)^{-\frac{1}{2}}X^\top\Sigma X(X^\top X)^{-\frac{1}{2}} \preccurlyeq\lambda_{1}I_{p},
	\end{align*}
	we have
	\begin{align*}
	p\lambda_{n}\leq M\leq p\lambda_{1},
	\end{align*}
	which then yields the bound
	\begin{align*}
	p\lambda_{n}+C(L^0)\exp\{-t\}\leq L\leq p\lambda_{1}+C(L^0)\exp\{-t\}.
	\end{align*}
\end{proof}

\section{Proof of \cref{lemma:kT1}}\label{sec:proofkT1}
We  begin with the full-rankness of SGN.
\begin{lemma}\label{lemma:fullrank}
	Suppose that the assumptions [A1]-[A3] hold. Then, the sequence $\{X^{(k)}\}$ generated by \cref{alg:sgn} with the stepsize $\alpha^{(k)}\leq 1$ satisfies
	\begin{align*}
	\sigma_{min}(X^{(k+1)})\geq\frac{1}{2}\sigma_{min}(X^{(k)}),
	\end{align*}
	which indicates that $\sigma_{min}(X^{(k+1)})>0$ whenever $\sigma_{min}(X^{(k)})>0$.
\end{lemma}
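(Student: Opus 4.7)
The plan is to exploit a thin QR decomposition of the current iterate that separates tangential and normal contributions to the update. Write $X = X^{(k)} = UR$, where $U\in\mathbb{R}^{n\times p}$ has orthonormal columns and $R\in\mathbb{R}^{p\times p}$ is upper triangular with $\sigma_{\min}(R)=\sigma_{\min}(X^{(k)})$. Substituting into \cref{eq:direction} and using $X(X^\top X)^{-1}=UR^{-\top}$, I decompose $\Sigma_h^{(k)}U = UB + C$ with $B := U^\top\Sigma_h^{(k)}U \in \mathbb{R}^{p\times p}$ symmetric PSD and $C := (I_n - UU^\top)\Sigma_h^{(k)}U$ satisfying $U^\top C = 0$. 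A direct algebraic expansion then collapses the SGN update into the clean form
\[
X^{(k+1)} \;=\; X^{(k)} + \alpha^{(k)} S^{(k)}(X^{(k)}) \;=\; U D + \alpha^{(k)} C R^{-\top}, \qquad D := \left(1-\tfrac{\alpha^{(k)}}{2}\right) R + \tfrac{\alpha^{(k)}}{2} B R^{-\top}.
\]

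Because the two summands have columns in orthogonal subspaces ($U^\top C=0$), expanding $X^{(k+1)\top} X^{(k+1)}$ kills the cross terms and produces
\[
X^{(k+1)\top} X^{(k+1)} \;=\; D^\top D + (\alpha^{(k)})^2 R^{-1} C^\top C R^{-\top} \;\succeq\; D^\top D,
\]
so $\sigma_{\min}(X^{(k+1)}) \geq \sigma_{\min}(D)$. This reduces the whole claim to a lower bound on $\sigma_{\min}(D)$ in terms of $\sigma_{\min}(R)$.

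The crux of the argument — and the only non-routine step — is the observation that
\[
R^{-1} D \;=\; \left(1-\tfrac{\alpha^{(k)}}{2}\right) I_p + \tfrac{\alpha^{(k)}}{2}\, R^{-1} B R^{-\top}
\]
is \emph{symmetric} (because $B$ is) and positive semidefinite. Since $R^{-1}BR^{-\top}\succeq 0$ and $\alpha^{(k)}\leq 1$, every eigenvalue of $R^{-1}D$ is at least $1-\alpha^{(k)}/2 \geq 1/2$, so that $\sigma_{\min}(R^{-1}D) = \lambda_{\min}(R^{-1}D) \geq 1/2$. Combining with submultiplicativity of the smallest singular value yields
\[
\sigma_{\min}(X^{(k+1)}) \;\geq\; \sigma_{\min}(D) \;=\; \sigma_{\min}\bigl(R\cdot R^{-1}D\bigr) \;\geq\; \sigma_{\min}(R)\,\sigma_{\min}(R^{-1}D) \;\geq\; \tfrac{1}{2}\sigma_{\min}(X^{(k)}),
\]
which is the desired inequality and in particular rules out rank deficiency whenever $\sigma_{\min}(X^{(k)})>0$. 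The main obstacle I anticipate is simply spotting the right change of basis: once one commits to the factor $R^{-1}$ on the left of $D$, the symmetry falls out and the bound becomes immediate, but without that rewriting the mixed term $BR^{-\top}$ in $D$ looks like it could destroy any clean singular value estimate.
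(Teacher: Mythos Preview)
Your proof is correct and is essentially the paper's argument in different coordinates. With $P=X(X^\top X)^{-1}=UR^{-\top}$ one has $P^\top X^{(k+1)} = R^{-1}U^\top(UD+\alpha^{(k)}CR^{-\top}) = R^{-1}D$, so your key matrix $R^{-1}D$ is exactly the paper's $P^\top X^{(k+1)}$; both proofs hinge on observing that this matrix equals $(1-\tfrac{\alpha^{(k)}}{2})I_p+\tfrac{\alpha^{(k)}}{2}(X^\top X)^{-1}X^\top\Sigma_h^{(k)}X(X^\top X)^{-1}$, hence symmetric PSD with smallest eigenvalue at least $1-\alpha^{(k)}/2$, and then combine with $\sigma_{\max}(P)=\sigma_{\min}(X)^{-1}$ (your $\sigma_{\min}(R)$ factor) to finish. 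The only cosmetic difference is that you reach $\sigma_{\min}(X^{(k+1)})\geq\sigma_{\min}(D)$ via the explicit orthogonal splitting $UD\perp CR^{-\top}$, whereas the paper gets the equivalent inequality $\sigma_{\min}(X^{(k+1)})\sigma_{\max}(P)\geq\sigma_{\min}(P^\top X^{(k+1)})$ directly from $\lambda_{\max}(PP^\top)I_n\succeq PP^\top$ without introducing QR.
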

\begin{proof}
	To simplify the notation, we denote 
	\begin{align}
	\label{eq:nota}	X^{+} = X^{(k+1)},\quad X=X^{(k)},\quad  \alpha=\alpha^{(k)},\quad  \Sigma_{h}=\Sigma_{h}^{(k)},\quad S = S^{(k)}.
	\end{align}
	Recall that $P=X(X^\top X)^{-1}$. It is obvious that
	$$
	X^{+\top}\left(\lambda_{\max}(PP^\top)I_n-PP^\top\right)X^{+}\succeq 0.$$
	Thus, we have
	$$
	\sigma^{2}_{\min}(X^{+})\sigma_{\max}^{2}(P)\geq \sigma_{\min}^{2}(P^\top X^{+}),
	$$
	where
	\begin{equation*}
	P^\top X^{+}=I_p+\frac{\alpha}{2}\left((X^\top X)^{-1}X^\top\Sigma_h X(X^\top X)^{-1}-I_p\right).
	\end{equation*}
	Then, we obtain
	\begin{align*}
	\sigma_{\min}(P^\top X^{+})&=\lambda_{\min}(P^\top X^{+})\\
	&=1-\frac{\alpha}{2}\left(1-\lambda_{\min}\left((X^\top X)^{-1}X^\top \Sigma_{h}X(X^\top X)^{-1}\right)\right)\\
	&\geq 1-\frac{\alpha}{2}.
	\end{align*}	
	Since $\sigma_{\max}(P)=\sigma_{\min}^{-1}(X)$, we have
	\begin{align*}
	\sigma_{\min}(X^{+})\geq \sigma_{\min}(X)\sigma_{\min}(P^\top X^{+})\geq\frac{1}{2}\sigma_{\min}(X)>0.
	\end{align*}
\end{proof}

Though the full-rankness of SGN has been proved in \cref{lemma:fullrank}, the possibility of $\lim\limits_{k\to\infty}\sigma_{\min}(X^{(k)})=0$ can not be ruled out.
As a theoretical supplement, we set a threshold $\underline{\sigma}>0$, and take an additional correction step $X^{(k)}+S^{(k)}_c$ when $\sigma_{\min}(X^{(k)})\leq\underline{\sigma}$.
The correction direction $S^{(k)}_c$ is chosen to satisfy
\begin{align}
\label{eq:G1} &\sigma_{\min}(X^{(k)}+S^{(k)}_c)>\underline{\sigma},\\
\label{eq:G2}
&f_{\text{E}}(X^{(k)}+S^{(k)}_c)\leq f_{\text{E}}(X^{(k)}),\\
\label{eq:G3}
&\Vert\sin\Theta(X^{(k)}+S^{(k)}_c, U_{p'})\Vert_{\text{F}}^{2}\leq\Vert\sin\Theta(X^{(k)}, U_{p'})\Vert_{\text{F}}^{2}.
\end{align}
This will be detailed in \cref{lemma:correction}.

We perform an SVD of the iterate $X^{(k)}\in\mathbb{R}^{n\times p}$ as
\begin{equation}\label{eq:xsvd}
X^{(k)} = U^{(k)}\Sigma^{(k)} V^{(k)\top} = U_{1}^{(k)}\Sigma_1^{(k)}V^{(k)\top},
\end{equation}
where $U^{(k)}\in\mathbb{R}^{n\times n}$ is an orthogonal matrix, $U_1^{(k)}=U^{(k)}_{(:, 1:p)}\in\mathbb{R}^{n\times p}$, $\Sigma^{(k)}\in\mathbb{R}^{n\times p}$ and $\Sigma^{(k)}_1\in\mathbb{R}^{p\times p}$ have the singular values $\sigma_{1}^{(k)}\geq\cdots\geq\sigma_p^{(k)}>0$ of $X^{(k)}$ on their diagonals and zeros elsewhere, and 
$V^{(k)}\in\mathbb{R}^{p\times p}$ is also an orthogonal matrix. 
Then, we have the following result.

\begin{lemma}\label{lemma:correction} 
	For some iterate $X^{(k)}\in\mathbb{R}^{n\times p}$ with the SVD form \cref{eq:xsvd}.
	Given a threshold 
	\begin{equation}
	\label{eq:threshold}
	\underline{\sigma}\leq \frac{4}{35}\frac{\lambda_n}{\lambda_1}\sqrt{\lambda_n}.
	\end{equation}
	Suppose that there are $p_{\underline{\sigma}}$ singular values no greater than $\underline{\sigma}$. Define a correction direction as
	\begin{equation}\label{eq:correction}
	S^{(k)}_c=\theta Q^{(k)} V_{p_{\underline{\sigma}}}^{(k)\top},
	\end{equation}
	where $\theta=\sqrt{\lambda_n}$, $Q^{(k)}\in\mathbb{R}^{n\times p}$ has orthonormal columns, and $V_{p_{\underline{\sigma}}}^{(k)}\in\mathbb{R}^{p\times p_{\underline{\sigma}}}$ is formed by the last $p_{\underline{\sigma}}$ columns of $V^{(k)}$. 
	Then, the inequalities \cref{eq:G1} and \cref{eq:G2} hold if
	$Q^{(k)}\in\textbf{span}\{U_{(:, (p+1):n)}^{(k)}\}$; and \cref{eq:G1}, \cref{eq:G2} and \cref{eq:G3} all hold if $Q^{(k)}$ is taken as $Q^{(k)}=U_{(:, (p-p_{\underline{\sigma}}+1):p)}^{(k)}$.
\end{lemma}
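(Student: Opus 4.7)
The plan is to exploit the SVD structure $X^{(k)} = U_1^{(k)}\Sigma_1^{(k)} V^{(k)\top}$ (suppressing the superscript $k$) and partition the matrices according to the block of small singular values. I would write $\Sigma_1 = \Diag(\Sigma_{1,\text{large}},\Sigma_{1,\text{small}})$ with $\Sigma_{1,\text{small}}$ collecting the last $p_{\underline{\sigma}}$ singular values (all $\leq\underline{\sigma}$), split $U_1 = [U_{1,\text{large}},\,U_{1,\text{small}}]$ columnwise, and $V=[V_{p-p_{\underline{\sigma}}},\,V_{p_{\underline{\sigma}}}]$. This produces the clean decomposition
\[
X = U_{1,\text{large}}\Sigma_{1,\text{large}}V_{p-p_{\underline{\sigma}}}^\top + U_{1,\text{small}}\Sigma_{1,\text{small}}V_{p_{\underline{\sigma}}}^\top,\qquad S_c^{(k)} = \theta\, Q^{(k)} V_{p_{\underline{\sigma}}}^\top,
\]
so that the correction acts only on the $V_{p_{\underline{\sigma}}}$-block. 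The two geometric choices of $Q^{(k)}$ in the statement lead to genuinely different forms for $X+S_c^{(k)}$, and I would dispatch them separately, starting with the easier Case~2.

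For Case~2, $Q^{(k)}=U_{1,\text{small}}$ merely shifts the small singular values: $X+S_c^{(k)} = U_1\tilde\Sigma_1 V^\top$ with $\tilde\Sigma_1 = \Diag(\Sigma_{1,\text{large}},\Sigma_{1,\text{small}}+\theta I_{p_{\underline{\sigma}}})$, leaving the singular vectors intact. Hence \cref{eq:G1} is immediate (the new smallest singular value is at least $\theta=\sqrt{\lambda_n}>\underline{\sigma}$), and \cref{eq:G3} comes for free because $\textbf{range}(X+S_c^{(k)})=\textbf{range}(X)$ makes $\tr(\bar X^\top\bar X(X^\top X)^{-1})=\tr(\bar U_1^\top \bar U_1)$ invariant under the correction. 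For \cref{eq:G2} I would expand the Frobenius-norm difference using the rank-$p_{\underline{\sigma}}$ update $U_{1,\text{small}}DU_{1,\text{small}}^\top$ with $D=2\theta\Sigma_{1,\text{small}}+\theta^2 I$, bound the positive traces via $\Sigma_{1,\text{small}}\preccurlyeq\underline{\sigma}I$ and the negative one via $U_{1,\text{small}}^\top\Sigma U_{1,\text{small}}\succcurlyeq\lambda_n I$; the leading $-p_{\underline{\sigma}}\theta^4$ absorbs the remaining $O(p_{\underline{\sigma}}\theta^2\underline{\sigma}^2)$ terms with room to spare under the hypothesized threshold.

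Case~1, where $Q^{(k)}\perp U_1$ (so that $S_c^{(k)\top} X = 0$ and $Q^{(k)\top}U_{1,\text{small}}=0$), is the delicate one. For \cref{eq:G1}, orthogonality of the two $V$-blocks combined with $Q^{(k)\top}U_{1,\text{small}}=0$ yields $(X+S_c^{(k)})^\top(X+S_c^{(k)}) = V\Diag(\Sigma_{1,\text{large}}^2,\,\Sigma_{1,\text{small}}^2+\theta^2 I)V^\top$, so the smallest singular value is at least $\min\{\sigma_{p-p_{\underline{\sigma}}},\theta\}>\underline{\sigma}$. For \cref{eq:G2}, the same orthogonality relations collapse the expansion of $\|(X+S_c^{(k)})(X+S_c^{(k)})^\top-\Sigma\|_{\text F}^2-\|XX^\top-\Sigma\|_{\text F}^2$ to exactly four trace terms,
\[
-4\theta\,\tr(Q^{(k)\top}\Sigma U_{1,\text{small}}\Sigma_{1,\text{small}}) \;-\;2\theta^2\,\tr(Q^{(k)\top}\Sigma Q^{(k)})\;+\;2\theta^2\,\tr(\Sigma_{1,\text{small}}^2)\;+\;\theta^4 p_{\underline{\sigma}},
\]
and bounding $\tr(Q^{(k)\top}\Sigma Q^{(k)})\geq p_{\underline{\sigma}}\lambda_n$ makes the second term essentially cancel the fourth (leaving $-p_{\underline{\sigma}}\lambda_n^2$), while the third is a harmless $O(\underline{\sigma}^2)$ remainder.

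The hard part will be the first, cross, term, which is not sign-definite. The best bound I expect, via Cauchy--Schwarz in the Frobenius inner product, is $|\tr(Q^{(k)\top}\Sigma U_{1,\text{small}}\Sigma_{1,\text{small}})|\leq \|Q^{(k)\top}\Sigma U_{1,\text{small}}\|_{\text F}\,\|\Sigma_{1,\text{small}}\|_{\text F}\leq p_{\underline{\sigma}}\lambda_1\underline{\sigma}$, which unavoidably brings in the \emph{top} eigenvalue $\lambda_1$ through the operator norm of $\Sigma$. Substituting back, the desired inequality $\Delta f_{\text E}\leq 0$ reduces to $\lambda_n^2\geq 4\sqrt{\lambda_n}\,\lambda_1\underline{\sigma}+2\lambda_n\underline{\sigma}^2$, and a short arithmetic check using $\underline{\sigma}\leq\tfrac{4}{35}\tfrac{\lambda_n}{\lambda_1}\sqrt{\lambda_n}$ gives $\tfrac{16}{35}\lambda_n^2+\tfrac{32}{1225}\lambda_n^2<\lambda_n^2$. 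This shows that the $\lambda_n/\lambda_1$ factor in \cref{eq:threshold} is forced precisely by the scale of this cross term, and closes the plan.
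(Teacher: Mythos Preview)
Your proposal is correct and follows essentially the same route as the paper: partition the SVD according to the small singular values, use the orthogonality relations between $Q^{(k)}$ and $U_1^{(k)}$ to block-diagonalize $(X+S_c)^\top(X+S_c)$ for \cref{eq:G1}, expand $f_{\text E}(X+S_c)-f_{\text E}(X)$ into trace terms, and control the only sign-indefinite piece $\tr(Q^{(k)\top}\Sigma U_{1,\text{small}}\Sigma_{1,\text{small}})$ by a $\lambda_1$-bound that is absorbed exactly by the threshold~\cref{eq:threshold}. Your handling of \cref{eq:G3} in Case~2 via range-invariance of $U_1$ is slightly slicker than the paper's explicit $\bar u_i,\bar q_i$ expansion, but the overall strategy and the arithmetic endpoint are the same.
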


\begin{proof}
	Here and below in this proof, we drop the superscript ``$(k)$" to simplify the notations, that is 
	\begin{equation*}
	X=X^{(k)},\ S_{c}=S_{c}^{(k)},\  U_1=U_1^{(k)},\  \Sigma_1=\Sigma_{1}^{(k)},\quad V=V^{(k)},\  V_{p_{\underline{\sigma}}}=V_{p_{\underline{\sigma}}}^{(k)},\ Q=Q^{(k)},
	\end{equation*}
	and $\sigma_i = \sigma_i^{(k)}\ (i=1,\cdots, k)$.

	By \cref{eq:xsvd} and the definition \cref{eq:correction} of $S_c$, we have
	\begin{align}
	\notag&X^\top X+X^\top S_{c}+S_{c}^\top X+ S_{c}^\top S_{c}\\
	\label{eq:singular_corr}=&V\Sigma_1^2 V^\top+\theta V\Sigma_1U_1^\top QV_{p_{\underline{\sigma}}}^\top+\theta V_{p_{\underline{\sigma}}}Q^\top U_1\Sigma_1V^\top+\theta^2V_{p_{\underline{\sigma}}}Q^\top QV_{p_{\underline{\sigma}}}^\top.
	\end{align}
	If		
	$Q\in\textbf{span}\{U_{(:, (p+1):n)}\}$, we have
	\begin{equation*}
	\cref{eq:singular_corr}=V\textbf{Diag}\left\{\sigma_1^2,\cdots,\sigma_{p-p_{\underline{\sigma}}}^2, (\sigma_{p-p_{\underline{\sigma}}+1}^2\!+\!\theta\sigma_{p-p_{\underline{\sigma}}+1}+\theta^2), \cdots, (\sigma_{p}^2+\theta\sigma_{p}\!+\!\theta^2)\right\}V^\top,
	\end{equation*}
	and if $Q=U_{(:, (p-p_{\underline{\sigma}}+1):p)}$, we have
	\begin{equation*}
	\cref{eq:singular_corr}=V\textbf{Diag}\left\{\sigma_1^2,\cdots, \sigma_{p-p_{\underline{\sigma}}}^2, (\sigma_{p-p_{\underline{\sigma}}+1}^2+\theta^2), \cdots, (\sigma_{p}^2+\theta^2)\right\}V^\top.
	\end{equation*}
	The first inequality \cref{eq:G1} is then obviously true from
	\begin{align*}
	\sigma_{\min}^2(X+S_{c})=&\lambda_{\min}((X+S_{c})^\top(X+S_{c}))\\
	=&\begin{cases}
	\min\left\{\sigma_{p-p_{\underline{\sigma}}}^2,  (\sigma_{p}^2+\theta^2+\theta\sigma_{p})\right\}, & Q\in\textbf{span}\{U_{(:, (p+1):n)}\}, \\ 
	\min\left\{\sigma_{p-p_{\underline{\sigma}}}^2, (\sigma_{p}^2+\theta^2)\right\}, & Q=U_{(:, (p-p_{\underline{\sigma}}+1):p)}.\end{cases}
	\end{align*}
	
	To check the second inequality \cref{eq:G2}, we compute
	\begin{align*}
	&f_{\text{E}}(X+S_c)-f_{\text{E}}(X)\\
	=\ &2\theta^2(\sigma_{p-p_{\underline{\sigma}}+1}^2+\cdots+\sigma_p^2)+p_{\underline{\sigma}}\theta^4-4\tr\left(\Sigma XS_c^\top\right)-2\theta^2\tr\left(Q^\top \Sigma Q\right)\\
	&+
	\begin{cases}
	\sum\limits_{i=p-p_{\underline{\sigma}}+1}^{p}4\theta\sigma_i^3+\sum\limits_{i=p-p_{\underline{\sigma}}+1}^{p}4\theta^2\sigma_i^2+\sum\limits_{i=p-p_{\underline{\sigma}}+1}^{p}4\theta^3\sigma_i, & Q\in\textbf{span}\{U_{(:, (p+1):n)}\}, \\ 
	0, & Q=U_{(:, (p-p_{\underline{\sigma}}+1):p)},\end{cases}\\
	\leq& \ \begin{cases}
	p_{\underline{\sigma}}\theta\left(4\underline{\sigma}^3+6\theta^2\underline{\sigma}^2+(4\theta^2+4\lambda_1)\underline{\sigma}+\theta^3-2\theta \lambda_n\right), & Q\in\textbf{span}\{U_{(:, (p+1):n)}\}, \\ 
	p_{\underline{\sigma}}\theta(
	2\theta\underline{\sigma}^2+4\lambda_1\underline{\sigma}+\theta^3-2\theta\lambda_n), & Q=U_{(:, (p-p_{\underline{\sigma}}+1):p)}.\end{cases}
	\end{align*}
	Then, setting $\theta=\sqrt{\lambda_n}$ and using \cref{eq:threshold} immediately gives the desired inequality \cref{eq:G2}.

	For the third inequality (\ref{eq:G3}), we have
	\begin{align*}
	&\Vert \sin^2 (X+S_c, U_{p'})\Vert_{\text{F}}^2-\Vert \sin^2 (X, U_{p'})\Vert_{\text{F}}^2\\
	=&\ \theta^2\left(\frac{\bar{u}_{p-p_{\underline{\sigma}}+1}^\top\bar{u}_{p-p_{\underline{\sigma}}+1}}{\sigma^2_{p-p_{\underline{\sigma}}+1}+\theta^2}+\cdots+\frac{\bar{u}_{p}^\top\bar{u}_{p}}{\sigma^2_{p}+\theta^2}\right)-\theta^2\left(\frac{\bar{q}_{1}^\top\bar{q}_{1}}{\sigma^2_{p-p_{\underline{\sigma}}+1}+\theta^2}+\cdots+\frac{\bar{q}_{p_{\underline{\sigma}}}^\top\bar{q}_{p_{\underline{\sigma}}}}{\sigma^2_{p}+\theta^2}\right)\\
	&-2\theta\left(\frac{\bar{u}_{p-p_{\underline{\sigma}}+1}^\top\bar{q}_{1}\sigma_{p-p_{\underline{\sigma}}+1}}{\sigma^2_{p-p_{\underline{\sigma}}+1}+\theta^2}+\cdots+\frac{\bar{u}_{p}^\top\bar{q}_{p_{\underline{\sigma}}}\sigma_p}{\sigma^2_{p}+\theta^2}\right),
	\end{align*}
	where $q_{i}\in\mathbb{R}^{n}$ is the $i$-th column of $Q$.
	When $Q=U_{(:, (p-p_{\underline{\sigma}}+1):p)}$, we have $\bar{q}_1=\bar{u}_{p-p_{\underline{\sigma}}+1},\cdots, \bar{q}_{p_{\underline{\sigma}}}=\bar{u}_{p}$, and thus  $\Vert \sin^2 (X+S_c, U_{p'})\Vert_{\text{F}}^2-\Vert \sin^2 (X, U_{p'})\Vert_{\text{F}}^2<0$.
	But when $Q\in\textbf{span}\{U_{(:, (p+1):n)}\}$, it is easy to find a counterexample for \cref{eq:G3}, e.g., the case of $\bar{U}_{1}=I_p$.	
\end{proof}

\begin{remark}\label{remark:correction}
	(1) For the requirements on the correction step, \cref{eq:G1} is natural. And \cref{eq:G2} and \cref{eq:G3} are just different measures for the convergence analysis, where the former is the one adopted in \cite{liu2015efficient}, and the latter is the one that our work focuses on;
	(2) The choice of $Q\in\textbf{span}\{U_{(:, (p+1):n)}\}$ coincides with the correction step taken in \cite{liu2015efficient}, which only suffices to obtain \cref{eq:G1} and \cref{eq:G2}. And however, this is also acceptable in our work as the estimation error of SGN iterates does not decrease monotonically.
\end{remark}

Next, we give the conditional expected boundedness of the increment of SGN iterates using sufficiently small stepsize.
\begin{lemma}\label{lemma:bound2}
	Suppose the assumptions [A1]-[A3] hold and stepsize $\alpha^{(k)}$ is chosen by
	\begin{equation}
	\label{eq:stepsize}
	\alpha^{(k)}\leq\min\left\{ \frac{\underline{\sigma}^2\varphi_{2}}{2\varphi_{4}},~1\right\},
	\end{equation}
	where $\underline{\sigma}$ is the threshold given in \cref{eq:threshold},
	$\varphi_2 = \mathbb{E}\Vert \Sigma_{h}^{(k)}\Vert_2$ and $\varphi_4 = \mathbb{E}\Vert \Sigma_{h}^{(k)}\Vert^2_2$.
	Then $\{X^{(k)}\}$ generated by \cref{alg:sgn} satisfies
	\begin{align*}
	\mathbb{E}\left[\left\Vert S^{(k)}(X^{(k)})\right\Vert_{\text{\rm F}}^2\Big\vert X^{(k)}=X\right]<p\underline{\sigma}^{-2}\varphi_4+\frac{p}{4}\max\{1,\ 2\varphi_2\}.
	\end{align*}
\end{lemma}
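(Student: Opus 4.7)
The plan rests on an orthogonal decomposition of the SGN direction along and transverse to $\text{range}(X)$. With $P=X(X^\top X)^{-1}$ and the projector $\Pi_X=XP^\top$, one can rewrite \eqref{eq:direction} as
\begin{equation*}
S^{(k)}(X)=(I-\Pi_X)\,\Sigma_h^{(k)}P+\tfrac{1}{2}\bigl(\Pi_X\Sigma_h^{(k)}P-X\bigr).
\end{equation*}
The first summand lies in the column complement of $\text{range}(X)$ and the second in $\text{range}(X)$, so Pythagoras gives $\|S^{(k)}(X)\|_F^2=\|(I-\Pi_X)\Sigma_h^{(k)}P\|_F^2+\tfrac{1}{4}\|\Pi_X\Sigma_h^{(k)}P-X\|_F^2$. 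Expanding the second norm and discarding the nonpositive cross-term $-\tfrac{1}{2}\tr(\Sigma_h^{(k)}\Pi_X)\le 0$ (valid because $\Sigma_h^{(k)}\succeq 0$) leaves the clean inequality $\|S^{(k)}(X)\|_F^2\le\|\Sigma_h^{(k)}P\|_F^2+\tfrac{1}{4}\|X\|_F^2$.

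Next I would invoke the correction construction of Lemma~\ref{lemma:correction}, triggered whenever $\sigma_{\min}(X^{(k)})\le\underline{\sigma}$, combined with the half-step loss bound of Lemma~\ref{lemma:fullrank}, to guarantee $\sigma_{\min}(X^{(k)})\ge\underline{\sigma}$ throughout the run. Consequently $\|P\|_F^2=\tr((X^\top X)^{-1})\le p/\underline{\sigma}^2$ and therefore $\|\Sigma_h^{(k)}P\|_F^2\le\|\Sigma_h^{(k)}\|_2^2\,\|P\|_F^2\le p\|\Sigma_h^{(k)}\|_2^2/\underline{\sigma}^2$. Taking conditional expectation over the fresh batch $A_h^{(k+1)}$ (independent of $X^{(k)}$ by [A1]) yields $\mathbb{E}[\|S^{(k)}(X)\|_F^2\mid X^{(k)}=X]\le p\varphi_4/\underline{\sigma}^2+\tfrac{1}{4}\|X\|_F^2$.

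What remains is to certify $\|X^{(k)}\|_F^2\le p\max\{1,2\varphi_2\}$, which I would prove by induction on $k$. The base case is immediate since $X^{(0)}$ is orthonormal and $\|X^{(0)}\|_F^2=p$. For the induction step I would use the identity $\tr(X^\top S^{(k)}(X))=\tfrac{1}{2}\tr(\Sigma_h^{(k)}\Pi_X)-\tfrac{1}{2}\|X\|_F^2$ (obtained by pre-multiplying \eqref{eq:direction} by $X^\top$ and noting $X^\top XP^\top=X^\top$), which after taking expectation gives $\mathbb{E}[\tr(X^\top S^{(k)})\mid X]\le\tfrac{1}{2}p\varphi_2-\tfrac{1}{2}\|X\|_F^2$ via $\|\Sigma\|_2\le\varphi_2$. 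Expanding $\mathbb{E}[\|X^{(k+1)}\|_F^2\mid X^{(k)}]=\|X\|_F^2+2\alpha^{(k)}\mathbb{E}[\tr(X^\top S)\mid X]+(\alpha^{(k)})^2\mathbb{E}[\|S\|_F^2\mid X]$ then produces a contractive recursion with a forcing term of order $\alpha p\varphi_2+\alpha^2 p\varphi_4/\underline{\sigma}^2$, and the stepsize ceiling $\alpha\le\underline{\sigma}^2\varphi_2/(2\varphi_4)$ is what ensures this forcing does not push $\mathbb{E}\|X^{(k+1)}\|_F^2$ above the threshold $p\max\{1,2\varphi_2\}$.

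The principal obstacle is propagating the $\|X^{(k)}\|_F^2$ invariant through the induction, since one must balance the contractive drift $-\alpha\|X\|_F^2$ against both the quadratic inflation $\alpha^2\|X\|_F^2/4$ and the stochastic bias $\alpha^2 p\varphi_4/\underline{\sigma}^2$. The two regimes $\varphi_2<1/2$ (where the baseline $p$ dominates) and $\varphi_2\ge 1/2$ (where the drift level $2p\varphi_2$ dominates) have to be handled separately; the specific form $\max\{1,2\varphi_2\}$ in the lemma is exactly what makes the invariant tight enough to close in both regimes under the prescribed stepsize, and combining the invariant with the display derived in the previous paragraph immediately delivers the claimed bound.
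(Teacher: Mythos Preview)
Your approach is essentially the paper's: both obtain the key inequality $\tr(S^\top S)\le\|\Sigma_h P\|_{\text{F}}^2+\tfrac14\|X\|_{\text{F}}^2$ (you via the orthogonal decomposition along and transverse to $\operatorname{range}(X)$, the paper by directly expanding $\tr(X^{+\top}X^{+})$ and $\tr(S^\top S)$ and discarding the same nonnegative terms), then bound $\|\Sigma_h P\|_{\text{F}}^2$ using $\sigma_{\min}(X)\ge\underline\sigma$ and propagate an invariant on $\|X^{(k)}\|_{\text{F}}^2$ through a one-step recursion. The only cosmetic difference is that the paper closes the invariant by splitting on the state, i.e.\ whether $\tr(X^\top X)\lessgtr 2p\varphi_2$ (bounded in the first case, non-increasing in the second), whereas you split on the data parameter $\varphi_2\lessgtr 1/2$ and close a linear recursion.
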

\begin{proof}
	For simplicity, we use the same notations as \cref{eq:nota}.
	From the update rule of SGN, it is straightforward to get
	\begin{align}
	\notag	\tr\left(X^{+\top}X^{+}\right)&=\left(\frac{1}{4}\alpha^2-\alpha+1\right)\tr(X^\top X)+\left(-\frac{1}{2}\alpha^2+\alpha\right)\tr\left(X^\top \Sigma_{h}X(X^\top X)^{-1}\right)\\		\label{eq:fourterms}&\quad\quad\quad\quad+\alpha^2\tr\left(X^\top \Sigma_{h}^{2}X(X^\top X)^{-2}\right)-\frac{3}{4}\alpha^2\tr\left((X^\top \Sigma_{h}X)^2(X^\top X)^{-3}\right).
	\end{align}
	Hereafter, we assume that $X^{(k)} = X$ is known.
	Suppose that 
	\begin{align}
	\label{eq:hypo}
	\tr(X^\top X)\leq 2p\mathbb{E}\Vert \Sigma_{h}\Vert_2,
	\end{align}
	we then have
	\begin{align*}
	\notag	\mathbb{E}\tr(X^{+\top}X^{+})
	&\leq p\left(\frac{1}{2}\alpha^2-2\alpha+2-\frac{1}{2}\alpha^2+\alpha\right)\mathbb{E}\Vert \Sigma_{h}\Vert_2+\alpha^2\mathbb{E}\Vert \Sigma_{h}\Vert^2_2\tr((X^\top X)^{-1})\\
	&\leq p\left(-\alpha+2\right)\mathbb{E}\Vert \Sigma_{h}\Vert_2+\alpha^2p\mathbb{E}\Vert \Sigma_{h}\Vert^2_2\sigma_{\min}^{-2}(X)\\
	&\leq^{(i)} p\left(-\alpha+2\right)\mathbb{E}\Vert \Sigma_{h}\Vert_2+\frac{p}{2}\alpha\mathbb{E}\Vert \Sigma_{h}\Vert_2=p\left(2-\frac{1}{2}\alpha\right)\varphi_{2}<\infty,
	\end{align*}
	where the inequality (i) follows from \cref{eq:stepsize}.

	Given that $\tr(X^{(0)\top}X^{(0)})=p$, the hypothesis \cref{eq:hypo} may not be available. 
	Hence, we turn to the case of $\tr(X^\top X)\geq 2p\mathbb{E}\Vert \Sigma_{h}\Vert_2$, where we have
	\begin{align*}
	\mathbb{E}\tr\left(X^{+\top}X^{+}\right)&\leq\left(\frac{1}{4}\alpha^2-\alpha+1-\frac{1}{4}\alpha^2+\frac{1}{2}\alpha\right)\tr(X^\top X)+\alpha^2\mathbb{E}\Vert \Sigma_{h}\Vert^2_2\tr((X^\top X)^{-1})\\
	&\leq\left(\frac{1}{4}\alpha^2-\alpha+1-\frac{1}{4}\alpha^2+\frac{1}{2}\alpha+\frac{1}{4}\alpha\right)\tr(X^\top X)\leq\tr(X^\top X).
	\end{align*}
	The inequality above shows that $\mathbb{E}\tr\left(X^{+\top}X^{+}\right)$ will not increase, until \cref{eq:hypo} is satisfied. Even though \cref{eq:hypo} might never hold, the SGN iterates could always be bounded in expectation by the initial value due to the non-increasing property. Or more specifically,
	$$\mathbb{E}\tr\left(X^{+\top}X^{+}\right)\leq\max\{p,\ 2p\varphi_2\}.$$

	We then turn to the expectational increment
	\begin{align*}
	\mathbb{E}\tr\left(S^\top S\right) 
	&\leq\mathbb{E}\tr\left((X^\top X)^{-1}X^\top \Sigma_{h}^2X(X^\top X)^{-1} \right)+\frac{1}{4}\tr\left(X^\top X\right)\\
	&\leq\mathbb{E}\Vert\Sigma_h\Vert^2_2\tr((X^\top X)^{-1})+\frac{1}{4}\tr\left(X^\top X\right)\\
	&\leq p\underline{\sigma}^{-2}\varphi_4+\frac{1}{4}\max\{p,\ 2p\varphi_2\}
	<\infty,
	\end{align*}
	which thus completes the proof.
\end{proof}

Then, we are in a position to prove	\cref{lemma:kT1}.
\begin{proof}[Proof of \cref{lemma:kT1}] 
	The infinitesimal mean of $\kvec\left(X(t)\right)$ could be directly obtained from \cref{eq:infinitemean} as 
	\begin{align*}
	&\lim_{\alpha\to 0}\frac{1}{\alpha}\mathbb{E}[\kvec\left(\Delta X_\alpha\right)\vert X_\alpha =X]\\
	=&~\kvec\left(\Sigma X(X^\top X)^{-1}-\frac{1}{2}X-\frac{1}{2}X(X^\top X)^{-1}X^\top\Sigma X(X^\top X)^{-1}\right),
	\end{align*}
	and the infinitesimal variance from \cref{eq:infinitevariance} as 
	\begin{align*}
	\lim_{\alpha\to 0}\frac{1}{\alpha}\mathbb{E}\left[\kvec\left(\Delta X_\alpha\right)\kvec\left(\Delta X_\alpha\right)^\top\vert X_\alpha=X\right]\leq 0=0(=\mathcal{O}(\alpha)),
	\end{align*}		
	where the inequality is due to the result in \cref{lemma:bound2} that $\alpha^{-2}\mathbb{E}\Vert \Delta X_\alpha\Vert_{\text{F}}^2$ is finite, and thus $\mathbb{E}\Vert\kvec\left(\Delta X_\alpha\right)\Vert_{\text{F}}^2=\mathcal{O}(\alpha^2)$. 
	
	Applying Corollary 4.2 in Section 7.4 of \cite{ethier1986markov} then completes the proof.
\end{proof}

\section{Proof of \cref{lemma:kT2}}\label{sec:proofkT2}
\begin{proof}
	As $X_{\alpha}(t)$ lies around some stationary point $\tilde{X}$ associated with the eigen-index set $\mathcal{I}_{p}=\{i_{1},\cdots, i_{p}\}$ and the extended one $\hat{\mathcal{I}}_p$, we impose two additional assumptions on $\tilde{X}$ for simplicity:		
	\begin{itemize}
		\item[(i)] $i_{1}> i_{2}>\cdots > i_{p-1}>i_{p}$;
		\item[(ii)] $\lambda_{i_{1}}, \lambda_{i_{2}}, \cdots, \lambda_{i_{p-1}}$ are single eigenvalues and $\lambda_{i_{p}}$ has multiplicity $(q-p+1)\geq 1$,
	\end{itemize}
	where $q=\vert\hat{\mathcal{I}}_p\vert$. 
	Then, $X_{\alpha}(t)$ could be expressed in the form
	\begin{align*}
	X_{\alpha}(t)=E_{q}\Lambda_{q}^{\frac{1}{2}}WV^\top+\mathcal{O}(\alpha^{\frac{1}{2}}),
	\end{align*}
	where $E_q=[e_{i_{1}}, e_{i_{2}}, \cdots, e_{i_{q}}]\in\mathbb{R}^{n\times q},~\Lambda_{q}=\Diag(\lambda_{i_{1}}, \lambda_{i_{2}}, \cdots, \lambda_{i_{q}})\in\mathbb{R}^{q\times q}$, $V$ is a $p$-dimensional orthogonal matrix, and the weight matrix $W\in\mathbb{R}^{q\times p}$ is defined by
	\begin{equation*} 
	W=
	\left[         
	\begin{array}{cc}   
	I_{p-1}&0_{(p-1)\times 1}\\
	0_{(q-p+1)\times(p-1)}&w
	\end{array}
	\right],\quad w^\top w=1,\quad w\in\mathbb{R}^{q-p+1}.
	\end{equation*}
	Let $
	\Delta Y_\alpha(t) = Y_\alpha (t+\alpha)-Y_\alpha(t)=\alpha^{-\frac{1}{2}}\left(X_\alpha(t+\alpha)-X_\alpha(t)\right)V.
	$
	In the same vein as the proof of \cref{lemma:kT1}, we compute the $(np)$-dimensional infinitesimal mean as
	\begin{align}
	\label{eq:mean2}\lim_{\alpha\to 0}\frac{1}{\alpha}\mathbb{E}\left[\kvec(\Delta Y_\alpha(t))\vert Y_\alpha(t)=Y\right]=~\kvec\left(\Sigma Y\Lambda_{p}^{-1}-Y\right),
	\end{align}
	
	\noindent and the $(np)$-dimensional infinitesimal variance matrix as
	\begin{equation*} 
	\lim_{\alpha\to 0}\frac{1}{\alpha}\mathbb{E}\left[\kvec(\Delta Y_\alpha)\kvec(\Delta Y_\alpha)^\top\vert Y_\alpha(t)=Y\right]=
	\mathbb{E}
	\left[         
	\begin{array}{cccc}   
	S_{(:, 1)}S_{(:, 1)}^\top&\cdots& S_{(:, 1)}S_{(:, p)}^\top\\  
	\cdots &\cdots&\cdots\\  
	S_{(:, p)}S_{(:, 1)}^\top&\cdots& S_{(:, p)}S_{(:, p)}^\top
	\end{array}
	\right],
	\end{equation*}
	where
	\begin{align*}
	S_{(:, j)}=\Sigma_{h}E_{q}\Lambda_{q}^{\frac{1}{2}}&W\Lambda_{p(:, j)}^{-\frac{1}{2}}-\frac{1}{2}E_{q}\Lambda_{q}^{\frac{1}{2}}W_{(:, j)}\\
	&~~~~~~~~-\frac{1}{2}E_{q}\Lambda_{q}^{\frac{1}{2}}W\Lambda_{p}^{-1}W^\top \Lambda_{q}^{\frac{1}{2}}E_{q}^\top \Sigma_{h}E_{q}\Lambda_{q}^{\frac{1}{2}}W\Lambda_{p(:, j)}^{-1},~\quad 1\leq j\leq p,
	\end{align*}
	where $\Sigma_h=\sum_{i=1}^h a_{i}a_{i}^\top/h$ and $a_{i}\in\mathbb{R}^n~(i = 1, \cdots, h)$ are i.i.d. copies of the random vector $a$.
	For any $1\leq l,~r\leq p$, we have
	\begin{align}
	\notag\mathbb{E}\left[S_{(:, l)}S_{(:, r)}^\top\right]&=
	\left(-\frac{1}{2}-\frac{1}{2}+\frac{1}{4}+\frac{1}{4}+\frac{1}{4}\right)T_{1}^{lr}+\left(1-\frac{1}{2}-\frac{1}{2}+\frac{1}{4}\right) \left(1-\frac{1}{h}\right)T_{1}^{lr}\\
	\label{eq:variance2}&\quad~+\frac{1}{h}T_{2}^{lr}-\frac{1}{2h}\left(\tilde{W}T_{2}^{lr}+T_{2}^{lr}\tilde{W}\right)+\frac{1}{4h}\tilde{W}T_{2}^{lr}\tilde{W},
	\end{align}
	where
	\begin{equation*}
	\begin{aligned}
	T^{lr}_{1}=&\left(\lambda_{i_{l}}\lambda_{i_{r}}\right)^{\frac{1}{2}}(E_{q}W)_{(:, l)}\left((E_{q}W)_{(:, r)}\right)^\top,\\
	T^{lr}_{2}=&\left\{
	\begin{array}{rcl}
	&\Sigma+2T^{lr}_{1}+\sum_{j=1}^{\tau}w_{j}^2(\mathbb{E}[b_{i_{p}}^{4}]/\lambda_{i_{p}}-3\lambda_{i_{p}})E_{i_{p}+j-1, i_{p}+j-1}, & {l=r=p,}\\
	&\Sigma+(\mathbb{E}[b_{i_{l}}^{4}]/\lambda_{i_{l}}^2-1)T^{lr}_{1},& {l=r\neq p,}\\
	&(\lambda_{i_{l}}\lambda_{i_{r}})^{\frac{1}{2}}\left(T^{lr}_{1}+T^{rl}_{1}\right),& {otherwise,}
	\end{array} \right.\\
	\tilde{W}=&
	\left[         
	\begin{array}{cc}   
	WW^\top&0_{q\times (n-q)}\\
	0_{(n-q)\times q}&0_{(n-q)\times (n-q)}
	\end{array}
	\right]\in\mathbb{R}^{n\times n},
	\end{aligned}
	\end{equation*}
	and $E_{l, r}$ denotes the $n$-dimensional matrix with its $(l, r)$-th element being one and other entries being zero.
	Using \cref{eq:mean2}, \cref{eq:variance2} and applying Corollary 4.2 in Section 7.4 of \cite{ethier1986markov} finally yield the SDE approximation \cref{eq:ksde}.
	It could be easily verified that even if the additional assumptions (i), (ii) are eliminated, \cref{eq:ksde} is still valid.
\end{proof}
\end{document}